\documentclass{article}  
\usepackage[pdftex,
bookmarksnumbered,
bookmarksopen,
colorlinks,
citecolor=blue,
linkcolor=blue,]{hyperref}
\usepackage{amsmath}
\usepackage{amssymb}
\usepackage{mathrsfs}
\usepackage{amsfonts}
\usepackage{amsthm}
\usepackage{algorithm}
\usepackage{booktabs}
\usepackage{listings}
\usepackage{boxedminipage}
\usepackage{algorithmic}
\usepackage{stmaryrd}
\usepackage{cite}
\usepackage{relsize}
\usepackage{lscape}
\usepackage[top=1.3in, bottom=1.5in, left=1.5in, right=1.5in]{geometry}
\usepackage{stmaryrd} 
\usepackage{relsize}%
\usepackage{url}
\usepackage{color,xcolor}
\usepackage{epsfig}
\usepackage{graphicx}
\usepackage{subfigure}
\usepackage{epstopdf}
\usepackage{braket}
\usepackage{tikz}
\usetikzlibrary{shapes,arrows}
\tikzstyle{nomalnode} = [rectangle,rounded corners, minimum width=3cm,minimum height=1cm,text centered, draw=black]
\tikzstyle{arrow} = [thick,->,>=stealth]

\usepackage{cleveref}

\newtheorem{theorem}{Theorem}[section]
\newtheorem{definition}{Definition}[section]
\newtheorem{proposition}{Proposition}[section]
\newtheorem{property}{Property}[section]

\newtheorem{remark}{Remark}[section]

\parskip 0.12in
\setlength{\floatsep}{2\floatsep}
\setlength{\textfloatsep}{2\textfloatsep}
\setlength{\intextsep}{2\intextsep}

\newcommand{\R}{\mathbb{R}}
\newcommand{\ot}{\underset{\bf t}{\otimes}}
\newcommand{\oh}{\underset{\bf H}{\otimes}}

\usepackage{amsopn}
\DeclareMathOperator{\diag}{diag}

\DeclareMathOperator*{\argmin}{arg\,min}                                  
                                                              
	\definecolor{darkgray}{rgb}{0.66, 0.66, 0.66}

\addtolength{\oddsidemargin}{-0.1 \textwidth}
\addtolength{\textwidth}{0.2 \textwidth}
\addtolength{\topmargin}{-0.1 \textheight}
\addtolength{\textheight}{0.2 \textheight}

\title{Hot-SVD: Higher-Order t-Singular Value Decomposition for Tensors based on Tensor-Tensor Product}

\author{Ying Wang$^*$, Yuning Yang\thanks{College of Mathematics and Information Science, Guangxi University, Nanning, 530004, China} \thanks{Corresponding author: Yuning Yang, yyang@gxu.edu.cn}                             }

\begin{document} 
\maketitle


\begin{abstract}
  This paper considers a way of generalizing   the t-SVD of third-order tensors (regarded as tubal matrices) to tensors of arbitrary order
  \(N\) (which can be similarly regarded as tubal tensors of order \(N-1\)). \color{black}Such a generalization is different from the   t-SVD for  tensors of order greater than three [Martin, Shafer, Larue, SIAM J. Sci. Comput., 35 (2013), A474--A490]. \color{black} 
   The decomposition is 
  called Hot-SVD since it can   be recognized as a tensor-tensor product version of  HOSVD.
The existence of Hot-SVD is  proved. To this end,      a new transpose for third-order tensors
  is introduced. This transpose is crucial in the verification of Hot-SVD, since it serves as a bridge between tubal tensors and their unfoldings.
  We establish some   properties of Hot-SVD, analogous to those of HOSVD, and in doing so we emphasize the perspective of tubal tensors.
  The truncated and sequentially truncated  Hot-SVD are then introduced, whose    error bounds are $\sqrt{N}$ for an $(N+1)$-th order tensor. 
  We provide numerical examples to validate Hot-SVD, truncated Hot-SVD, and sequentially truncated Hot-SVD.
  
  \noindent {\bf Key words: }
  t-product; tensor-tensor product; tensor decomposition; t-SVD; HOSVD 
  \end{abstract}

  \noindent {\bf  AMS subject classifications.} 90C26, 15A69, 41A50
 \hspace{2mm}\vspace{3mm}

\section{Introduction}
Research in tensor decomposition and approximation has seen increasing popularity with the
exponential increase and availability of data in our world.  
The prospect of 
encoding these data in a tensor-based format allows us to exploit fully their inherent 
multi-dimensional features.
Tensor decompositions, such as canonical polyadic decomposition, Tucker decomposition, tensor-train decomposition, t-SVD, have found various applications in  signal processing, machine learning, computer vision, etc; see, e.g, \cite{kolda2010tensor,comon2014tensors,cichocki2015tensor,sidiropoulos2017tensor,oseledets2011tensor,kilmer2011factorization} and the references therein.  
One of the
powerful tensor-algebraic methods for processing tensor-type data is  
provided by the t-product based tensor theory and computation \cite{braman2010third,kilmer2011factorization,kilmer2013third,kernfeld2015tensor}, for which an Eckart-Young type result (t-SVD) exists.
The t-product framework has recently generated a surge of research activities both in theory and applications, rendering
available a large stock of the matrix-algebra arsenal to the tensor-algebra community
(see, e.g,, \cite{braman2010third,kilmer2011factorization,kilmer2013third,kernfeld2015tensor,miao2020generalized,lund2020tensor,miao2021t,zheng2021t,qi2021tubal,ling2021st,qi2021t,zhu2022tensor,newman2018stable,zhang2017exact,zhang2014novel,hao2018tschatten,ming2019multiview}).

As \cite{kernfeld2015tensor,qi2021tubal}, a natural perspective on the t-product (and also the more general tensor-tensor
product defined by an arbitrarily invertible linear transformation) is provided by tubal matrices. That is, one regards a third-order tensor
\(\mathcal{A}\), say of size \(I_1 \times I_2 \times I_3\), as a tubal matrix of size \(I_1 \times I_2\) whose entries are horizontal vectors (tubal scalars) of  length $I_3$. 
A variation of the Hadamard product, which is 
obtained by composing the Hadamard product with an invertible linear transformation, gives the general tensor-tensor product
of two tubal scalars  \cite{kernfeld2015tensor}. When the linear transformation in action is the (non-normalized) discrete Fourier transform (DFT),
this gives the most widely used t-product of tubal scalars, which can then be extended to tubal matrix multiplications to give the t-product of tubal matrices. 
Based on the t-product, the t-SVD of a third-order tensor was proposed in \cite{kilmer2011factorization}. The idea is to decompose the
frontal slices of a tubal matrix in the Fourie domain using the matrix SVD, in conjunction with DFT and IDFT (inverse 
discrete Fourier transform) operations on the tubal scalars before and after the matrix decompositions. 
t-SVD was then generalized to tensors of order higher than three via recursion \cite{martin2013order}. t-SVD was introduced in \cite{kilmer2021tensor} in the sense of the more general tensor-tensor product.

In this paper, we investigate an alternative   way of generalizing t-SVD to tensors of order higher than three. It is known that the celebrated higher-order singular value decomposition (HOSVD) is quite a successful extension of SVD to higher-order tensors that decomposes the data tensor into a core tensor and a collection of factor matrices. HOSVD together with the tensor-tensor product naturally suggests us to consider a tensor-tensor product version of HOSVD. The new decomposition, called the higher-order t-singular value decomposition (Hot-SVD), treats an $N$-th order tensor as an $(N-1)$-th order tubal tensor, and factorizes it into a core tubal tensor and a collection of tubal matrices. This is essentially based on the unfolding in the tensor-tensor product sense and t-SVD for third-order tensors (tubal matrices).  The connnection of Hot-SVD with t-SVD and HOSVD is illustrated in Fig. \ref{fig:connection_hotsvd_tsvd_hosvd}.

\begin{figure}
\centering
\begin{tikzpicture}[node distance=4.5cm,scale=0.75, transform shape]
  \node (node1) [nomalnode] {SVD of matrices};
  \node (node2) [nomalnode,right of=node1,xshift=3cm] {HOSVD of tensors};
  \node (node3) [nomalnode,below of=node1] {t-SVD of tubal matrices};
  \node (node4) [nomalnode,below of=node2] {Hot-SVD of tubal tensors};
  
  \draw [arrow] (node1) -- node[anchor=south] {higher-order} (node2);
  \draw [arrow] (node1) -- node[anchor=east] {tubal} (node3);
  \draw [arrow] (node2) -- node[anchor=west] {tubal} (node4);
  \draw [arrow] (node3) -- node[anchor=north] {higher-order} (node4);
\end{tikzpicture}
\caption{Connection among SVD, t-SVD, HOSVD, and Hot-SVD.}\label{fig:connection_hotsvd_tsvd_hosvd}
\end{figure}
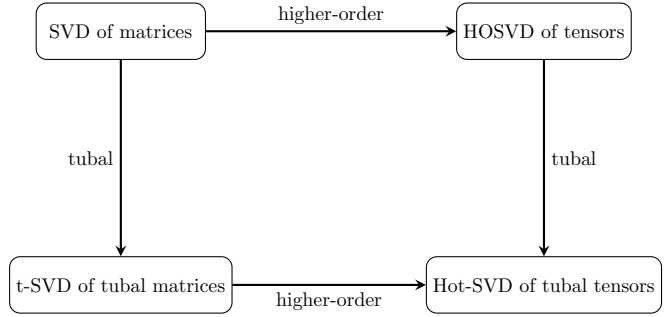

We then prove the existence of Hot-SVD and establish several   properties such as all-orthogonality and ordering that are similar to those of HOSVD. A crucial point to make the analysis go through is the use of a   transpose for third-order tensors that is different from  \cite[Def. 3.14]{kilmer2011factorization} and \cite[Def. 4.4]{kernfeld2015tensor}.
We show that some necessary properties   hold for this   transpose. In particular, based on this transpose, the Kronecker product and corresponding properties can be generalized in the sense of tensor-tensor product. These enable us to build a bridge between tubal tensors and their tubal matrix unfoldings such that the proof of validness of Hot-SVD can go through.

We then introduce the truncated and sequentially truncated Hot-SVD, and show that their error bounds are $\sqrt{N}$ for an $(N+1)$-th order tensor. The sequentially truncated Hot-SVD generalizes the sequentially truncated HOSVD to tubal tensors \cite{vannieuwenhoven2012new}.

The rest is organized as follows. In \cref{sec:pre}, we collect some preliminary materials related to the tensor-tensor product of 
third-order tensors.
In Section \ref{sec:smallt}, we introduce the new transpose   and prove its basic properties. 
 We also extend some familiar notions and
results from matrices (such as the Kronecker product) to tubal matrices. 
The Hot-SVD model for tubal tensors is established in Section \ref{sec:main}, utilizing the new transpose for third-order tensors. 
 Section \ref{sec:prop} is mainly focused on the truncated Hot-SVD,  the sequentially truncated Hot-SVD and their error bounds. We analyze the computational complexity of Hot-SVD, truncated Hot-SVD, and sequentially truncated Hot-SVD in Section \ref{sec:comp}. Numerical examples are provided in Section \ref{sec:numer}. 
Finally we draw some conclusions in Section \ref{sec:con}.

\section{Preliminaries}
\label{sec:pre}

In this section, preliminary materials related to the tensor-tensor product of 
third-order tensors are recorded.

  A tensor of order \(N\) is an element \(\mathcal{A} \in \R^{I_1 \times I_2 \times \cdots \times I_N}\), i.e.,
  a multi-way array \(\mathcal{A}=(a_{i_1i_2\cdots i_N})_{(i_1,i_2,\ldots,i_N)\in I_1 \times I_2 \times \cdots \times I_N}\).
  Complex tensors are defined similarly.
The Matlab indexing notation is adopted in this article. Denote by \(\mathcal{A}(i_1,\ldots,i_N)\) the \((i_1,\ldots,i_N)\)-th entry of 
\(\mathcal{A}\) and by \(\mathcal{A}(i_1,\ldots,i_{n-1},:,i_{n+1},\ldots,i_N)\) the mode-\(n\) fiber of \(\mathcal{A}\) obtained by
varying the \(n\)-th index while
fixing the other \(N-1\) indices to be \(i_1,\ldots,i_{n-1}, i_{n+1},\ldots,i_{N}\) respectively.

  The \(n\)-mode product of a tensor \(\mathcal{A} \in \R^{I_1 \times I_2 \times \cdots \times I_N}\)
  with a matrix \(U \in \R^{J \times I_n}\) is the tensor 
  \(\mathcal{A}\times_n U \in \R^{I_1 \times \cdots \times I_{n-1}\times J \times I_{n+1} \times \cdots \times I_N}\)
  obtained by multiplying each mode-\(n\) fiber of \(\mathcal{A}\) by \(U\). 

\begin{definition}[{\cite[Definition 2.1]{kilmer2013third}}]
  An element \(\mathbf{c} \in \R^{1 \times 1 \times p}\) is called a tubal scalar of length \(p\). That is, a tubal scalar of length \(p\) is 
  just a vector of dimension \(p\) viewed as a third-order tensor with a single tubal fiber. 
  Denote by \(\mathbf{c}^{(j)}\) the \(j\)-th frontal slice of \(\mathbf{c}\) (as a third-order tensor), i.e., the \(j\)-th component 
  of \(\mathbf{c}\) as a 
  vector. The set of tubal scalars of length \(p\) is denoted by \(\mathbb{R}_p\).
  Complex tubal scalars are defined similarly and the set of complex tubal scalars of length \(p\) is denoted by \(\mathbb{C}_p\). 
\end{definition}

For \(\mathbf{a}=(\mathbf{a}^{(1)},\ldots,\mathbf{a}^{(p)})~{\rm and}~ \mathbf{b}=(\mathbf{b}^{(1)},\ldots,\mathbf{b}^{(p)}) \in \mathbb{C}_p\),
their Hadamard product is the tubal scalar
\(\mathbf{a} \odot \mathbf{b} :=(\mathbf{a}^{(1)}\mathbf{b}^{(1)},\ldots,\mathbf{a}^{(p)}\mathbf{b}^{(p)}) \in \mathbb{C}_p\),  where  \(\odot\) refers to the Hadamard product. 
It can be easily verified that the Hadamard product is commutative, 
associative, unital (with the identity being \( (1,1,...,1) \in \mathbb{C}_p\)), and distributive over addition. 

The following definition comes from \cite[Definition 4.2]{kernfeld2015tensor}, specialized to the tubal scalar case.
\begin{definition}[\cite{kernfeld2015tensor}]\label{def:tproductubalscalar}
  Let \(L:\mathbb{C}_p \to \mathbb{C}_p \) be an invertible linear transformation. For tubal scalars 
  \(\mathbf{a}, \mathbf{b} \in \mathbb{C}_p\), 
  their tensor-tensor product with respect to \(L\) is the tubal scalar 
  \(\mathbf{a} *_L \mathbf{b} = L^{-1}(L(\mathbf{a}) \odot L(\mathbf{b})) \in \mathbb{C}_p\).
\end{definition}

When the invertible linear transformation
\(L\) is the (non-normalized) discrete Fourier transform (DFT), the resulting product is the popular t-product defined in \cite{kilmer2011factorization}.
There are other choices of \(L\) such as discrete cosine transform and discrete wavelet transform \cite{kernfeld2015tensor}. 
For ease of notation, in the sequel, we will drop the subscript \(L\) and denote
by \(\mathbf{a}*\mathbf{b}\) the tensor-tensor product of \(\mathbf{a}\) and \(\mathbf{b}\) with respect to a fixed invertible linear transformation \(L\).

The tensor-tensor product of tubal scalars is commutative, associative, unital (with the identity being 
\(L^{-1}\left((1,1,\ldots,1)\right) \in \mathbb{C}_p\)), and distributive over addition \cite{kernfeld2015tensor}. 
Therefore, \(\mathbb{C}_p\) endowed with the tensor-tensor product is a commutative ring (\cite[Proposition 4.2]{kernfeld2015tensor}), but not a field in general.

\begin{definition}[{\cite[Definition 2.4]{kilmer2013third}}]
  A tubal matrix with entries in \(\mathbb{C}_p\)\ is a two-dimensional array \(\mathcal{A} = (\mathbf a_{ij})_{(i,j) \in I \times J}
  \in \mathbb{C}_p^{I \times J}\) where the entries \(\mathbf a_{ij} \in \mathbb{C}_p\) are tubal scalars of length \(p\).
  Thus a tubal matrix \(\mathcal{A} \in \mathbb{C}_p^{I \times J}\) is essentially a third-order tensor
  \(\mathcal{A} \in \mathbb{C}^{I \times J \times p}\).
\end{definition}

\begin{definition}[{\cite[Definition 2.1]{kernfeld2015tensor}}]
  Let \(\mathcal{A} \in \mathbb{C}_p^{I \times J}\) and \(\mathcal{B} \in \mathbb{C}_p^{J \times K}\) be two tubal matrices.
  The face-wise product of \(\mathcal{A}\) and \(\mathcal{B}\) is the tubal matrix 
  \(\mathcal{A} \Delta \mathcal{B} \in \mathbb{C}_p^{I \times K}\) defined according to 
  \[\left(\mathcal{A} \Delta \mathcal{B} \right)^{(i)} = \mathcal{A}^{(i)}\mathcal{B}^{(i)},\,\,\,i=1,\ldots,p,\]
  where \(\mathcal{A}^{(i)}\) is the \(i\)-th frontal slice of \(\mathcal{A}\).
\end{definition}

\begin{definition}[{\cite[Definition 4.2]{kernfeld2015tensor}}]\label{def:ttproduct}
  Let \(\mathcal{A} \in \mathbb{C}_p^{I \times J}\) and \(\mathcal{B} \in \mathbb{C}_p^{J \times K}\) be two tubal matrices.  
  The 
  tensor-tensor product of \(\mathcal{A}\) and \(\mathcal{B}\)
  is the tubal matrix \(\mathcal{A}*\mathcal{B} \in \mathbb{C}_p^{I \times K}\) given by 
  \[\mathcal{A} * \mathcal{B} = L^{-1}(L(\mathcal{A}) \Delta L(\mathcal{B})),\]
  where \(L(\mathcal{A})\) means applying \(L\) to each entry of \(\mathcal{A}\), and \(\Delta\) is the 
  face-wise product.
\end{definition}

The tensor-tensor product of tubal matrices can also be defined using the standard matrix multiplication rule.
\cite[Lemma 4.1]{kernfeld2015tensor} states that for any \(i \in I, k \in K\),
\begin{equation}\label{eq:matrixproduct}
  \left(\mathcal{A}*\mathcal{B}\right)(i,k) = \sum_{j=1}^{J}\mathcal{A}(i,j)*\mathcal{B}(j,k).
\end{equation}

When \(L\) is the DFT, \cref{def:ttproduct} is equivalent to the t-product defined in \cite{kilmer2011factorization}.

The tensor-tensor product of tubal matrices, like the usual matrix multiplication, is associative and distributive over addition
(see the proof of \cite[Proposition 4.2]{kernfeld2015tensor}), but not commutative 
in general. 

\begin{definition}[{\cite[Proposition 4.1]{kernfeld2015tensor}}]
  Let \(\widehat{\mathcal{I}} \in \mathbb{C}_p^{I \times I}\)be the tubal matrix  such that  
  \(\widehat{\mathcal{I}}^{(i)}\)
  is the identity matrix for \(i=1,\ldots,p\). The identity tubal matrix with respect to the tensor-tensor product
  is defined to be \(\mathcal{I} = L^{-1}(\widehat{\mathcal{I}})\).
  It holds that \(\mathcal{A}*\mathcal{I}=\mathcal{A}\) for any \(\mathcal{A} \in \mathbb{C}_p^{H \times I}\)
  and \(\mathcal{I}*\mathcal{B}=\mathcal{B}\) for any \(\mathcal{B} \in \mathbb{C}_p^{I \times J}\).
\end{definition}

When \(L\) is the DFT, the identity tubal matrix \(\mathcal{I}\) is the third-order tensor whose first frontal slice is the identity matrix and 
whose other frontal slices are zero matrices.

\begin{definition}[{\cite[Definition 4.4]{kernfeld2015tensor}}]\label{def:herm}
  The Hermitian transpose \(\mathcal{A}^H \in \mathbb{C}_p^{J \times I} \) of 
  a tubal matrix \(\mathcal{A} \in \mathbb{C}_p^{I \times J}\) is defined according to  
  \[\left(L(\mathcal{A}^H)\right)^{(i)} = \left(L(\mathcal{A})^{(i)}\right)^H,\,\,\,i=1,\ldots,p,\]
  where \(\left(L(\mathcal{A})^{(i)}\right)^H\) is the Hermitian transpose of the matrix \(L(\mathcal{A})^{(i)}\).  
 In particular, the Hermitian transpose of a tubal scalar \(\mathbf{a} \in \mathbb{C}_p=\mathbb{C}_{p}^{1 \times 1}\) is the tubal scalar \(\mathbf{a}^H\) such that \(\left(L(\mathbf{a}^H)\right)^{(i)}=
 \left(L(\mathbf{a})^{(i)}\right)^H
 =\overline{L(\mathbf{a})^{(i)}}\) for \(i=1,\ldots,p.\)
\end{definition}

When \(\mathcal{A}\) is real-valued, we write \(\mathcal{A}^{T}\) for \(\mathcal{A}^H\) and 
\(\mathcal{A}^{T}\) is called the transpose of \(\mathcal{A}\).
When \(L\) is the DFT, the transpose \(\mathcal{A}^{T}\) can be obtained by transposing each frontal slice of \(\mathcal{A}\) 
and reversing the order of the frontal slices except for the first one (see \cite[p. 560]{kernfeld2015tensor}).

\begin{proposition}[{\cite[Proposition 4.3]{kernfeld2015tensor}}]
  The Hermitian transpose enjoys the rule \((\mathcal{A}*\mathcal{B})^H=\mathcal{B}^H*\mathcal{A}^H.\)
\end{proposition}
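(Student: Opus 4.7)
The plan is to verify the identity by applying $L$ to both sides and checking equality frontal slice by frontal slice, which reduces the claim to the standard matrix identity $(AB)^H = B^H A^H$. Since $L$ is an invertible linear transformation, it suffices to show that $L((\mathcal{A}*\mathcal{B})^H)^{(i)} = L(\mathcal{B}^H * \mathcal{A}^H)^{(i)}$ for every $i = 1,\ldots,p$.

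For the left-hand side, I would first unfold the Hermitian transpose via \cref{def:herm} to get $L((\mathcal{A}*\mathcal{B})^H)^{(i)} = (L(\mathcal{A}*\mathcal{B})^{(i)})^H$. Then, by \cref{def:ttproduct}, $L(\mathcal{A}*\mathcal{B}) = L(\mathcal{A})\,\Delta\,L(\mathcal{B})$ (the outer $L$ and $L^{-1}$ cancel), so the $i$-th frontal slice equals $L(\mathcal{A})^{(i)}\, L(\mathcal{B})^{(i)}$ by the definition of the face-wise product. Taking the Hermitian transpose of this ordinary matrix product yields
\[
L((\mathcal{A}*\mathcal{B})^H)^{(i)} = \bigl(L(\mathcal{B})^{(i)}\bigr)^H \bigl(L(\mathcal{A})^{(i)}\bigr)^H.
\]

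For the right-hand side, by the same pair of definitions,
\[
L(\mathcal{B}^H * \mathcal{A}^H)^{(i)} = L(\mathcal{B}^H)^{(i)}\, L(\mathcal{A}^H)^{(i)} = \bigl(L(\mathcal{B})^{(i)}\bigr)^H \bigl(L(\mathcal{A})^{(i)}\bigr)^H,
\]
where the second equality again invokes \cref{def:herm}. Matching the two expressions slice by slice and then applying $L^{-1}$ completes the proof.

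There is no real obstacle here; the argument is a routine bookkeeping exercise once one recognizes that $L$ diagonalizes the tensor-tensor product into a collection of independent matrix products in the transform domain, so every tubal-matrix identity whose matrix analogue holds slicewise can be lifted by this same template. The only point requiring mild care is keeping track of which $L$ is inside which expression, in particular noting that the outer $L$ and $L^{-1}$ in $L(\mathcal{A}*\mathcal{B})$ cancel so that the face-wise product in the transform domain is what actually governs both sides.
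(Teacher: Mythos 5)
Your argument is correct. Note that the paper itself offers no proof of this proposition --- it is quoted verbatim from Kernfeld et al.\ \cite{kernfeld2015tensor} --- and your slice-wise verification in the transform domain (cancel the outer $L$ and $L^{-1}$, reduce each frontal slice to the ordinary matrix identity $(AB)^H = B^H A^H$, then pull back through $L^{-1}$) is exactly the standard argument one would give, consistent with how the paper proves its own analogous facts such as \cref{prop:tunitary}.
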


\begin{definition}[{\cite[Definition 2.3]{kilmer2021tensor}}]\label{def:unitary}
  A tubal matrix \(\mathcal{A} \in \mathbb{C}_p^{I \times I}\) is said to be unitary with respect to the tensor-tensor product if 
  \(\mathcal{A}*\mathcal{A}^{H}=\mathcal{A}^{H}*\mathcal{A}=\mathcal{I}\), where \(\mathcal{I}\) is the identity tubal matrix 
  with respect to the tensor-tensor product. Orthogonality with respect to the tensor-tensor product is defined similarly using
  \(\mathcal{A}^{T}\) when \(\mathcal{A}\) is real-valued.
\end{definition}

\begin{definition}
  A tubal matrix \(\mathcal{A} \in \mathbb{C}_p^{I \times J}\) (\(I \ge J\)) is said to be partially unitary with respect to 
  the tensor-tensor product if 
  \(\mathcal{A}^H*\mathcal{A}=\mathcal{I}\). Similarly a real tubal tensor \(\mathcal{A}\in \mathbb{R}_p^{I \times J}\)
  (\(I \ge J\)) is said to be partially orthogonal with respect to 
  the tensor-tensor product if \(\mathcal{A}^{T}*\mathcal{A}=\mathcal{I}\).
\end{definition}

We quote here the singular value decomposition theorem for tubal matrices based on the tensor-tensor product.

\begin{theorem}[t-SVD of tubal matrices, {\cite[Theorem 5.1]{kernfeld2015tensor}}]
  Let \(\mathcal{A} \in \mathbb{C}_p^{I \times J}\) be a tubal matrix. Then there exist unitary tubal matrices \(\mathcal{U} \in \mathbb{C}_p^{I \times I}\) and 
  \(\mathcal{V} \in \mathbb{C}_p^{J \times J}\) and an f-diagonal tubal matrix \(\mathcal{S} \in \mathbb{C}_p^{I \times J}\) such that 
  \[\mathcal{A} = \mathcal{U}*\mathcal{S}*\mathcal{V}^H.\]
\end{theorem}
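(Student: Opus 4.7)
The natural strategy is to exploit the fact that the tensor-tensor product is, by definition, conjugated by the invertible linear transformation $L$ to a face-wise product, and the face-wise product reduces to an independent matrix multiplication on each frontal slice. Thus the decomposition should be constructed slice-by-slice in the transform domain and then pulled back via $L^{-1}$.

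Concretely, the plan is as follows. First, form $\widehat{\mathcal{A}} := L(\mathcal{A}) \in \mathbb{C}_p^{I \times J}$. For each $i=1,\ldots,p$, apply the classical matrix singular value decomposition to the $i$-th frontal slice, obtaining $\widehat{\mathcal{A}}^{(i)} = \widehat{U}^{(i)} \widehat{S}^{(i)} (\widehat{V}^{(i)})^H$, where $\widehat{U}^{(i)} \in \mathbb{C}^{I \times I}$ and $\widehat{V}^{(i)} \in \mathbb{C}^{J \times J}$ are unitary and $\widehat{S}^{(i)} \in \mathbb{C}^{I \times J}$ is a (nonnegative) diagonal matrix. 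Stack these slices to define tubal matrices $\widehat{\mathcal{U}}, \widehat{\mathcal{S}}, \widehat{\mathcal{V}}$ with frontal slices $\widehat{U}^{(i)}, \widehat{S}^{(i)}, \widehat{V}^{(i)}$, and finally set $\mathcal{U} := L^{-1}(\widehat{\mathcal{U}})$, $\mathcal{S} := L^{-1}(\widehat{\mathcal{S}})$, and $\mathcal{V} := L^{-1}(\widehat{\mathcal{V}})$.

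The verification splits into three checks, each of which reduces to an elementary slice-wise identity in the transform domain. For the factorization $\mathcal{A} = \mathcal{U} * \mathcal{S} * \mathcal{V}^H$, one applies $L$ to both sides and uses Definition \ref{def:ttproduct} together with Definition \ref{def:herm}: on the $i$-th frontal slice the right-hand side becomes $\widehat{U}^{(i)} \widehat{S}^{(i)} (\widehat{V}^{(i)})^H$, which equals $\widehat{\mathcal{A}}^{(i)}$ by construction; invertibility of $L$ then gives the identity. For unitarity of $\mathcal{U}$ and $\mathcal{V}$, one uses Definition \ref{def:herm} and the face-wise characterization of $*$ to see that $(L(\mathcal{U}*\mathcal{U}^H))^{(i)} = \widehat{U}^{(i)} (\widehat{U}^{(i)})^H = I$, which coincides with the $i$-th frontal slice of $L(\mathcal{I}) = \widehat{\mathcal{I}}$; again invertibility of $L$ yields $\mathcal{U}*\mathcal{U}^H = \mathcal{I}$, and similarly for the other identities. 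For f-diagonality of $\mathcal{S}$, note that each $\widehat{S}^{(i)}$ is diagonal, so $\widehat{\mathcal{S}}$ is f-diagonal in the transform domain, and $\mathcal{S} = L^{-1}(\widehat{\mathcal{S}})$ is f-diagonal in the sense appropriate to the tensor-tensor product.

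There is no genuinely hard step: the whole argument is a ``lift-and-project'' through $L$, and the only thing to be mildly careful about is consistent bookkeeping between the tubal-matrix object and its transform-domain frontal slices, particularly when invoking Definition \ref{def:herm} to conclude that the slice-wise Hermitian transpose of $\widehat{\mathcal{U}}$ really is $L(\mathcal{U}^H)$. Once that correspondence is in hand, unitarity in the tensor-tensor sense and ordinary matrix unitarity of each $\widehat{U}^{(i)}, \widehat{V}^{(i)}$ are equivalent, and the decomposition follows.
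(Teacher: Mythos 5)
Your proposal is correct and is essentially the canonical argument: the paper itself quotes this theorem from \cite[Theorem 5.1]{kernfeld2015tensor} without reproving it, but the slice-wise SVD in the transform domain followed by pulling back through $L^{-1}$ is exactly the construction the paper describes informally in its introduction and relies on implicitly elsewhere (e.g., the slice-wise characterization of unitarity in the proof of Proposition \ref{prop:tunitary}). The one point worth stating a touch more explicitly is the f-diagonality of $\mathcal{S} = L^{-1}(\widehat{\mathcal{S}})$: since $L^{-1}$ acts entrywise on tubal scalars, the zero off-diagonal tubal scalars of $\widehat{\mathcal{S}}$ map to zero tubal scalars of $\mathcal{S}$, so the frontal slices of $\mathcal{S}$ itself are diagonal, matching the paper's definition of f-diagonal.
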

Here an f-diagonal tubal matrix means a tubal matrix whose frontal slices are diagonal matrices.

\begin{definition}[{\cite[Definition 3.4]{kilmer2021tensor}}]
  Let \(\mathcal{A} \in \mathbb{C}_p^{I \times J}\) be a tubal matrix and \(\mathcal{A} = \mathcal{U}*\mathcal{S}*\mathcal{V}^H\) 
  a t-SVD of \(\mathcal{A}\). The number of non-zero tubal scalars on the diagonal
  of \(\mathcal{S}\), which does not depend on the particular decomposition, is called the t-rank of \(\mathcal{A}\).

\end{definition}

\begin{definition}[{\cite[Definition 3.5]{kilmer2021tensor}}]
  Let \(\mathcal{A} \in \mathbb{C}_p^{I \times J}\) be a tubal matrix. The vector \(\rho = (\rho_1,\ldots,\rho_p)\),
  where \(\rho_i\) is the rank of the \(i\)-th frotal slice \(L(\mathcal{A})^{(i)}\), is called the multi-rank of \(\mathcal{A}\).
\end{definition}

An especially satisfyting feature of the t-SVD is the following Eckart-Young-type result.
The tubal matrix notation is used in the statement below.

\begin{theorem}[{\cite[Theorem 3.7]{kilmer2021tensor}}]\label{thm:EY}
  Let \(L\) be of the form \(L = cW\), where \(c \in \mathbb{C}\) is a non-zero scalar and \(W: \mathbb{C}_p \to \mathbb{C}_p \) is 
  a unitary transformation (i.e. a unitary matrix).\footnote{The DFT is of this form, with \(c=\sqrt{p}\) and \(W\) being 
  the normalized DFT matrix.}
  Let the t-SVD of \(\mathcal{A} \in \mathbb{C}_{p}^{n_1 \times n_2}\) be given by 
  \(\mathcal{A} = \mathcal{U}*\mathcal{S}*\mathcal{V}^H\) and for \(k < \min(n_1,n_2)\)
  define 
  \[\mathcal{A}_k = \sum_{i=1}^{k}\mathcal{U}(:,i)*\mathcal{S}(i,i)*\mathcal{V}(:,i)^H.\]
  Then \(\mathcal{A}_k = \argmin_{\widetilde{\mathcal{A}}\in M}\|\mathcal{A}-\widetilde{A}\|_F\), where
  \(M = \{ \mathcal{X}*\mathcal{Y} \, | \, \mathcal{X} \in \mathbb{C}_{p}^{n_1 \times k}, \mathcal{Y} \in \mathbb{C}_{p}^{k \times n_2} \}\).
\end{theorem}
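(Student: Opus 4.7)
The plan is to lift the problem to the transform domain where it decouples into $p$ independent classical matrix approximation problems. The key observation is that $L=cW$ with $W$ unitary implies that applying $L$ entry-wise to any tubal matrix preserves Frobenius norm up to the factor $|c|$: for any $\mathcal{X}$, one has $\|L(\mathcal{X})\|_F^2=|c|^2\|\mathcal{X}\|_F^2$, since each tube is mapped by $cW$. Combining this with the trivial identity $\|L(\mathcal{X})\|_F^2=\sum_{i=1}^{p}\|L(\mathcal{X})^{(i)}\|_F^2$ gives
\[
\|\mathcal{A}-\widetilde{\mathcal{A}}\|_F^2=\frac{1}{|c|^2}\sum_{i=1}^{p}\bigl\|L(\mathcal{A})^{(i)}-L(\widetilde{\mathcal{A}})^{(i)}\bigr\|_F^2,
\]
so minimizing the left side is equivalent to minimizing each summand on the right independently.

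Next I would characterize the constraint set $M$ slicewise. By \cref{def:ttproduct}, if $\widetilde{\mathcal{A}}=\mathcal{X}*\mathcal{Y}$ with $\mathcal{X}\in\mathbb{C}_p^{n_1\times k}$ and $\mathcal{Y}\in\mathbb{C}_p^{k\times n_2}$, then $L(\widetilde{\mathcal{A}})^{(i)}=L(\mathcal{X})^{(i)}L(\mathcal{Y})^{(i)}$, which is a product of an $n_1\times k$ with a $k\times n_2$ matrix and thus has rank at most $k$. Conversely, given any family of matrices $B_i\in\mathbb{C}^{n_1\times n_2}$ with $\mathrm{rank}(B_i)\le k$, I can factor $B_i=X_iY_i$ with $X_i\in\mathbb{C}^{n_1\times k}$ and $Y_i\in\mathbb{C}^{k\times n_2}$, then define $\mathcal{X}, \mathcal{Y}$ so that $L(\mathcal{X})^{(i)}=X_i$ and $L(\mathcal{Y})^{(i)}=Y_i$, recovering a $\widetilde{\mathcal{A}}=\mathcal{X}*\mathcal{Y}\in M$ with the prescribed slices. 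Hence $M$ corresponds exactly to the tubal matrices whose transform-domain slices each have rank $\le k$.

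The minimization therefore decouples into $p$ independent problems: for each $i$, find $B_i$ of rank at most $k$ that minimizes $\|L(\mathcal{A})^{(i)}-B_i\|_F$. By the classical Eckart--Young theorem, the optimum is the truncated matrix SVD of $L(\mathcal{A})^{(i)}$ keeping its top $k$ singular triplets. It remains to verify that $\mathcal{A}_k$ realizes exactly this optimum slice by slice. Applying $L$ to the summation defining $\mathcal{A}_k$ and using that $L$ commutes with the $*$-product in the way captured by \cref{def:ttproduct} yields
\[
L(\mathcal{A}_k)^{(i)}=\sum_{j=1}^{k}L(\mathcal{U})^{(i)}(:,j)\,L(\mathcal{S})^{(i)}(j,j)\,\bigl(L(\mathcal{V})^{(i)}(:,j)\bigr)^H,
\]
which by the construction of the t-SVD (each $L(\mathcal{A})^{(i)}=L(\mathcal{U})^{(i)}L(\mathcal{S})^{(i)}(L(\mathcal{V})^{(i)})^H$ is an ordinary SVD with diagonal entries in non-increasing order) is precisely the rank-$k$ truncated SVD of $L(\mathcal{A})^{(i)}$. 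Summing the slicewise optimality bounds through the identity above closes the argument.

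The main obstacle, and the step worth writing out carefully, is the bijective correspondence between $M$ and the collection of slicewise rank-$\le k$ tubal matrices in the transform domain; this is where the restriction $L=cW$ is used twice, once to guarantee the Frobenius isometry and once (implicitly via invertibility) to push the factorization back to the spatial domain. Once this correspondence is in hand, the rest is a direct slicewise invocation of the classical Eckart--Young theorem plus the observation that the t-SVD truncation agrees with the matrix SVD truncation on every frontal slice.
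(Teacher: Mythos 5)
This statement is imported verbatim from \cite[Theorem 3.7]{kilmer2021tensor}; the paper itself gives no proof of it, so there is nothing internal to compare against. Your argument --- reduce to the transform domain via the Frobenius isometry of $L=cW$, identify $M$ with the set of tubal matrices whose transform-domain frontal slices all have rank at most $k$ (using invertibility of $L$ for the converse inclusion), decouple into $p$ independent classical Eckart--Young problems, and check that the frontal slices of $L(\mathcal{A}_k)$ are exactly the rank-$k$ SVD truncations --- is correct and is essentially the standard proof given in the cited reference. The only caveat, inherited from the statement itself rather than introduced by you, is that the minimizer is generally not unique (e.g.\ when a slice has $\sigma_k=\sigma_{k+1}$), so ``$\argmin$'' should be read as ``a minimizer.''
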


The tubal matrix perspective naturally leads to the interpretation of a third-order tensor (i.e. a tubal matrix) as a linear operator
on the space of (lateral) matrices (i.e. tubal (column) vectors); see \cite{kilmer2013third} for a detailed exposition.

The following result is taken from \cite{kilmer2013third}, where it was stated for the t-product (i.e. for \(L\) being the DFT), while the proof 
works for the tensor-tensor product defined by any invertible linear transformation \(L\).

\begin{theorem}[{\cite[Theorem 4.6]{kilmer2013third}}]\label{thm:RN}
  Let \(\mathcal{A} \in \mathbb{C}_p^{l \times m}\) be a tubal matrix and \(\mathcal{A} = \mathcal{U}*\mathcal{S}*\mathcal{V}^H\) its t-SVD. 
  Let \(\mathbf{s}_i = \mathcal{S}(i,i)\), \(i=1,\ldots,n\), where \(n=\min(l,m)\).
  Suppose that \(\mathbf{s}_1,\ldots,\mathbf{s}_j\) are invertible,
  \(\mathbf{s}_{j+1},\ldots,\mathbf{s}_{j+k}\) are non-zero but not invertible, 
  and the remaining \(\mathbf{s}_{j+k+1},\ldots,\mathbf{s}_{n}\) are zero.
  Then the range \(R(\mathcal{A})\) of \(\mathcal{A}\)  and the kernel \(N(\mathcal{A})\) of \(\mathcal{A}\) are given by 
  \[R(\mathcal{A})=\{\mathcal{U}(1,:)*\mathbf{c}_1+\cdots+\mathcal{U}(j+k,:)*\mathbf{c}_{j+k} \, | \, \mathbf{c}_i=\mathbf{s}_i*\mathbf{d}_i, \mathbf{d}_i \in \mathbb{C}_p, j+1 \le i \le j+k\} ,\]
  \[N(\mathcal{A})=\{\mathcal{V}(j+1,:)*\mathbf{c}_{j+1}+\cdots+\mathcal{V}(m,:)*\mathbf{c}_m \, | \, \mathbf{s}_i*\mathbf{c}_i=\mathbf{0}, j+1 \le i \le j+k\}.\]
\end{theorem}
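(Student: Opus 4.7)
The plan is to exploit the t-SVD $\mathcal{A} = \mathcal{U}*\mathcal{S}*\mathcal{V}^H$ and the unitarity of $\mathcal{U}$ and $\mathcal{V}$ to reduce the description of $R(\mathcal{A})$ and $N(\mathcal{A})$ to that of the f-diagonal core $\mathcal{S}$, and then to analyse each diagonal tubal scalar $\mathbf{s}_i$ separately through the componentwise structure of $*$ in the $L$-transform domain. Throughout I write $\mathcal{U}(:,i)$ for the $i$-th lateral slice (tubal column) of $\mathcal{U}$, which is how the $\mathcal{U}(i,:)*\mathbf{c}_i$ and $\mathcal{V}(i,:)*\mathbf{c}_i$ in the statement are to be read.

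For the range, I would write a generic element as $\mathcal{A}*\mathbf{x} = \mathcal{U}*\mathcal{S}*(\mathcal{V}^H*\mathbf{x})$ and invoke unitarity of $\mathcal{V}$ so that $\mathbf{y} := \mathcal{V}^H*\mathbf{x}$ ranges freely over $\mathbb{C}_p^m$. Since $\mathcal{S}$ is f-diagonal, the expression $\mathcal{U}*\mathcal{S}*\mathbf{y}$ collapses to $\sum_i \mathcal{U}(:,i)*(\mathbf{s}_i*\mathbf{y}_i)$, and the task reduces to identifying, for each $i$, the set of tubal scalars achievable as $\mathbf{s}_i*\mathbf{y}_i$: this set is all of $\mathbb{C}_p$ when $\mathbf{s}_i$ is invertible ($i \le j$), equals $\{\mathbf{s}_i*\mathbf{d}_i : \mathbf{d}_i \in \mathbb{C}_p\}$ when $\mathbf{s}_i$ is non-zero but non-invertible ($j+1 \le i \le j+k$), and equals $\{\mathbf{0}\}$ otherwise, so the sum truncates at $j+k$ and (after an obvious relabelling of the arbitrary tubal scalars) yields the stated formula.

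For the kernel, the same setup combined with unitarity of $\mathcal{U}$ turns $\mathcal{A}*\mathbf{x}=\mathbf{0}$ into $\mathcal{S}*(\mathcal{V}^H*\mathbf{x})=\mathbf{0}$, which by f-diagonality is equivalent to the coordinatewise conditions $\mathbf{s}_i*\mathbf{y}_i=\mathbf{0}$ for $1 \le i \le n$, with $\mathbf{y}_i$ unconstrained for $i > n$. Invertibility forces $\mathbf{y}_i=\mathbf{0}$ for $i \le j$, leaves the constraint genuine for $j+1 \le i \le j+k$, and is automatic for $i > j+k$ because $\mathbf{s}_i=\mathbf{0}$ there. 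Transporting back via $\mathbf{x} = \mathcal{V}*\mathbf{y} = \sum_{i=j+1}^m \mathcal{V}(:,i)*\mathbf{y}_i$ and renaming $\mathbf{y}_i$ as $\mathbf{c}_i$ reproduces the claimed description of $N(\mathcal{A})$.

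The only point requiring real justification rather than bookkeeping is the characterisation, for a fixed $\mathbf{s}_i$, of the image of left multiplication by $\mathbf{s}_i$ on $\mathbb{C}_p$ and of the condition $\mathbf{s}_i*\mathbf{y}=\mathbf{0}$. Both follow immediately from $\mathbf{s}_i*\mathbf{y} = L^{-1}(L(\mathbf{s}_i) \odot L(\mathbf{y}))$: invertibility of $\mathbf{s}_i$ is equivalent to every component of $L(\mathbf{s}_i)$ being non-zero, in which case the map is bijective, and in general the image is exactly $\{\mathbf{s}_i*\mathbf{d} : \mathbf{d} \in \mathbb{C}_p\}$. I therefore do not foresee a genuine obstacle; the remaining work is the careful tracking of the three index ranges $1 \le i \le j$, $j+1 \le i \le j+k$, and $i > j+k$.
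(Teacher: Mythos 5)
This theorem is not proved in the paper: it is imported verbatim from \cite[Theorem 4.6]{kilmer2013third}, with only the remark that the original proof (stated for the DFT-based t-product) carries over to a general invertible $L$. So there is no in-paper argument to compare against; judged on its own, your proof is correct and is the natural one. Writing $\mathcal{A}*\vec{X}=\mathcal{U}*\mathcal{S}*(\mathcal{V}^H*\vec{X})$, using unitarity of $\mathcal{V}$ (resp.\ $\mathcal{U}$) to let $\mathbf{y}=\mathcal{V}^H*\vec{X}$ range freely (resp.\ to cancel $\mathcal{U}$ in the kernel computation), and then reading off the f-diagonal action coordinatewise is exactly the right reduction, and the trichotomy on the $\mathbf{s}_i$ is handled correctly: invertible $\mathbf{s}_i$ give all of $\mathbb{C}_p$ in the range and force $\mathbf{y}_i=\mathbf{0}$ in the kernel, the non-invertible non-zero ones give the genuinely constrained terms, and the zero ones drop out of the range while freeing the corresponding kernel coordinates (including $i>n$ when $m>l$). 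The one fact doing real work --- that $L$ identifies $(\mathbb{C}_p,*)$ with the product ring $\mathbb{C}^p$ under the Hadamard product, so that invertibility of $\mathbf{s}_i$ is equivalent to all components of $L(\mathbf{s}_i)$ being non-zero and multiplication by an invertible element is bijective --- is correctly isolated and justified. If you wanted to polish this into a full write-up, the only things to add are the explicit definitions of $R(\mathcal{A})$ and $N(\mathcal{A})$ as subsets of the module of lateral slices on which a tubal matrix acts (as in \cite{kilmer2013third}), and a one-line verification of the identity $\mathcal{U}*\mathcal{S}*\mathbf{y}=\sum_{i\le n}\mathcal{U}(:,i)*\mathbf{s}_i*\mathbf{y}_i$ from \cref{eq:matrixproduct}.
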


\section{The Small-t Transpose for Third-Order Tensors}\label{sec:smallt}

We first introduce the following transpose, and then 
we extend some familiar notions and results in matrix algebra (such as 
the Kronecker product) to tubal matrices based on this transpose.

\begin{definition}
  Let \(\mathcal{A} \in \mathbb{C}_p^{I \times J}\) be a tubal matrix.
  We denote by \(\mathcal{A}^t\) the tubal matrix obtained by simply transposing the frontal slices of \(\mathcal{A}\).
  Thus \(\mathcal{A}^t\) is obtained by transposing the tubal matrix \(\mathcal{A}\), i.e., \(\mathcal{A}^t(j,i)=\mathcal{A}(i,j)\) for all \(i \in I, j\in J\).
\end{definition}

The transpose defined above will be referred to as the small-t transpose (or the face-wise transpose) 
in contrast to the capital-T transpose introduced in the previous section. A series of basic properties concerning the small-t transpose will be established in the sequel.

\begin{proposition}\label{prop:smallttrans}
  The small-t transpose enjoys the rule \((\mathcal{A}*\mathcal{B})^t = \mathcal{B}^t*\mathcal{A}^t\) where \(\mathcal{A} \in \mathbb{C}_p^{I \times J}, \mathcal{B} \in \mathbb{C}_p^{J \times K}\).
\end{proposition}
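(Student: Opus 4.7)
The plan is to prove the identity by working entry-wise at the level of tubal scalars, using the formula \((\mathcal{A}*\mathcal{B})(i,k) = \sum_{j=1}^{J} \mathcal{A}(i,j)*\mathcal{B}(j,k)\) from equation (\ref{eq:matrixproduct}), together with the fact that the tensor-tensor product of tubal scalars is commutative. The small-t transpose leaves the individual tubes untouched and merely swaps the positional indices \(i \leftrightarrow j\); therefore any identity about tubal matrices whose entries are re-indexed but whose underlying tubal scalars are unchanged should fall out of a short index computation, provided the scalar product is commutative.

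First I would fix \(i \in I\) and \(k \in K\) and write
\[
(\mathcal{A}*\mathcal{B})^t(k,i) \;=\; (\mathcal{A}*\mathcal{B})(i,k) \;=\; \sum_{j=1}^{J} \mathcal{A}(i,j)*\mathcal{B}(j,k),
\]
using the definition of the small-t transpose and the entry-wise formula (\ref{eq:matrixproduct}). Next, I would rewrite each summand using \(\mathcal{A}(i,j) = \mathcal{A}^t(j,i)\) and \(\mathcal{B}(j,k) = \mathcal{B}^t(k,j)\), and then invoke commutativity of the tensor-tensor product on \(\mathbb{C}_p\) (recorded immediately after Definition \ref{def:tproductubalscalar}) to swap the two factors, yielding
\[
\sum_{j=1}^{J} \mathcal{A}(i,j)*\mathcal{B}(j,k) \;=\; \sum_{j=1}^{J} \mathcal{B}^t(k,j)*\mathcal{A}^t(j,i) \;=\; (\mathcal{B}^t*\mathcal{A}^t)(k,i),
\]
where the last equality applies (\ref{eq:matrixproduct}) again to the product \(\mathcal{B}^t*\mathcal{A}^t\). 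Since \(i,k\) were arbitrary, this establishes the claim.

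An alternative ``Fourier-side'' proof, which I would mention as a sanity check, goes as follows: the transformation \(L\) acts tube-wise and therefore commutes with the small-t transpose, so \(L(\mathcal{X}^t) = L(\mathcal{X})^t\) and likewise for \(L^{-1}\); meanwhile for the face-wise product one has \((\mathcal{X}\Delta\mathcal{Y})^t = \mathcal{Y}^t\Delta\mathcal{X}^t\) because each frontal slice satisfies the ordinary matrix identity \((AB)^T = B^T A^T\). Chaining these through Definition \ref{def:ttproduct} yields the same conclusion. I expect no real obstacle here: the essential content is precisely the commutativity of the tubal-scalar product, and the only mild subtlety worth highlighting is that commutativity is what allows the order reversal, even though the tensor-tensor product of tubal matrices itself is not commutative.
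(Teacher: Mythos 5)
Your proof is correct and is essentially identical to the paper's: both work entrywise via \cref{eq:matrixproduct}, re-index through the small-t transpose, and invoke commutativity of the tubal-scalar product to reverse the order of the factors. The alternative Fourier-side argument you sketch is a valid bonus but is not needed.
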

  \begin{proof}
  \[\begin{aligned}
    (\mathcal{A}*\mathcal{B})^t(k,i)
    =&(\mathcal{A}*\mathcal{B})(i,k) \\
    \overset{(\ref{eq:matrixproduct})}{=}& \sum_{j} \mathcal{A}(i,j)*\mathcal{B}(j,k)\\
    =& \sum_j \mathcal{A}^t(j,i)*\mathcal{B}^t(k,j) \\
    =& \sum_j \mathcal{B}^t(k,j)*\mathcal{A}^t(j,i) \\
    \overset{(\ref{eq:matrixproduct})}{=}& (\mathcal{B}^t*\mathcal{A}^t) (k,i),\\
    \end{aligned}\]
  where the fourth equality holds since the tensor-tensor product of tubal scalars are commutative.
  \end{proof}

We shall make use of the following proposition in establishing our main theorem.

\begin{proposition}\label{prop:tunitary}
  \(\mathcal{A} \in \mathbb{C}_p^{I \times I}\) is unitary if and only if \(\mathcal{A}^t\) is.
\end{proposition}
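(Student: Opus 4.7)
The plan is to reduce the claim to two auxiliary identities and then turn the crank with Proposition 3.1. The two auxiliary identities are: (i) the identity tubal matrix is fixed by the small-t transpose, $\mathcal{I}^t = \mathcal{I}$; and (ii) the small-t transpose and the Hermitian transpose commute, $(\mathcal{A}^H)^t = (\mathcal{A}^t)^H$. Once these are in hand, applying $(\cdot)^t$ to both sides of $\mathcal{A}*\mathcal{A}^H = \mathcal{I}$ and using Proposition 3.1 yields $(\mathcal{A}^H)^t * \mathcal{A}^t = \mathcal{I}$, which by (ii) is $(\mathcal{A}^t)^H * \mathcal{A}^t = \mathcal{I}$. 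An identical manipulation on $\mathcal{A}^H * \mathcal{A} = \mathcal{I}$ gives $\mathcal{A}^t * (\mathcal{A}^t)^H = \mathcal{I}$. Hence $\mathcal{A}^t$ is unitary. The converse is immediate from $(\mathcal{A}^t)^t = \mathcal{A}$.

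For (i), I would unpack the definition: since $L(\mathcal{I}) = \widehat{\mathcal{I}}$ has every frontal slice equal to the identity matrix, a direct computation shows that each frontal slice $\mathcal{I}^{(k)} = L^{-1}(\widehat{\mathcal{I}})^{(k)}$ is a diagonal (hence symmetric) matrix, so transposing the frontal slices leaves $\mathcal{I}$ unchanged.

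For (ii), the key observation is that $L$ acts along the tubal (third) direction while the small-t transpose acts on the first two directions, so the two operations commute: $L(\mathcal{A}^t) = \bigl(L(\mathcal{A})\bigr)^t$, meaning $\bigl(L(\mathcal{A}^t)\bigr)^{(i)} = \bigl(L(\mathcal{A})^{(i)}\bigr)^T$ for every $i$. Combining this with the frontal-slice definition of $(\cdot)^H$ from Definition 2.10, both $(L((\mathcal{A}^t)^H))^{(i)}$ and $(L((\mathcal{A}^H)^t))^{(i)}$ reduce to the complex conjugate $\overline{L(\mathcal{A})^{(i)}}$, so the two tubal matrices coincide.

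I expect (ii) to be the main substantive step, since it is the only place where one must reconcile two different definitional conventions — the Hermitian transpose, which is expressed in the transformed domain, and the small-t transpose, which is expressed in the spatial domain. Once the commuting-with-$L$ observation is recorded, the rest of the argument is a one-line application of Proposition 3.1.
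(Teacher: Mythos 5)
Your proof is correct, and it takes a genuinely different route from the paper's. The paper never invokes Proposition 3.1 in its proof of this statement: instead it first characterizes unitarity of $\mathcal{A}$ as unitarity of every frontal slice of $L(\mathcal{A})$, then establishes $L(\mathcal{A}^t)^{(i)}=\bigl(L(\mathcal{A})^{(i)}\bigr)^T$, and finishes by a bare-hands computation showing $L(\mathcal{A}^t)^{(i)}\bigl(L(\mathcal{A}^t)^{(i)}\bigr)^H=\overline{\bigl(L(\mathcal{A})^{(i)}\bigr)^H L(\mathcal{A})^{(i)}}$ (and the companion identity with the factors reversed), so that unitarity transfers by conjugating the identity matrix. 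You instead stay at the level of tubal-matrix algebra: the reversal rule $(\mathcal{A}*\mathcal{B})^t=\mathcal{B}^t*\mathcal{A}^t$, the fixed point $\mathcal{I}^t=\mathcal{I}$, and the commutation $(\mathcal{A}^H)^t=(\mathcal{A}^t)^H$ do all the work, and the converse follows for free from involutivity of $(\cdot)^t$ rather than by a second slice computation. The two arguments share exactly one substantive ingredient --- the identity $L(\mathcal{A}^t)=\bigl(L(\mathcal{A})\bigr)^t$, which the paper records as its equation \cref{eqlat} and which you need to verify your identity (ii); your checks of (i) and (ii) at the frontal-slice level are sound (both sides of (ii) reduce to $\overline{L(\mathcal{A})^{(i)}}$, and the off-diagonal tubes of $\mathcal{I}$ are $L^{-1}$ of zero tubes, so each frontal slice of $\mathcal{I}$ is diagonal). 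What your approach buys is modularity: $(\mathcal{A}^H)^t=(\mathcal{A}^t)^H$ is a reusable lemma in the same spirit as the paper's Proposition 3.4(i)--(ii), and your derivation would survive verbatim in any setting where the reversal rule and these two identities hold. What the paper's approach buys is that the slice-level characterization of unitarity (frontal slices of $L(\mathcal{A})$ are unitary matrices) is made explicit, which is occasionally useful in its own right.
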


\begin{proof}
By definition, \(\mathcal{A}\) is unitary if
\(\mathcal{A}*\mathcal{A}^{H}=\mathcal{A}^{H}*\mathcal{A}=\mathcal{I}\), which is equivalent to
\[L(\mathcal{A}*\mathcal{A}^{H})=L(\mathcal{A}^{H}*\mathcal{A})=L(\mathcal{I}).\]
Note that 
\[\left(L(\mathcal{A}*\mathcal{A}^{H})\right)^{(i)}=
L(\mathcal{A})^{(i)} L(\mathcal{A}^H)^{(i)}
\overset{\ref{def:herm}}{=}
L(\mathcal{A})^{(i)}  \left(L(\mathcal{A})^{(i)}\right)^H,\]
\[\left(L(\mathcal{A}^{H}*\mathcal{A})\right)^{(i)}= 
L(\mathcal{A}^H)^{(i)} L(\mathcal{A})^{(i)} 
\overset{\ref{def:herm}}{=}
\left(L(\mathcal{A})^{(i)}\right)^H L(\mathcal{A})^{(i)}.\]
Thus \(\mathcal{A}\) is unitary if and only if for \(i=1,\ldots,p\),
\[L(\mathcal{A})^{(i)}  \left(L(\mathcal{A})^{(i)}\right)^H= \left(L(\mathcal{A})^{(i)}\right)^H L(\mathcal{A})^{(i)} 
= \left(L(\mathcal{I})\right)^{(i)}=I,\]
where \(I \in \mathbb{C}^{I \times I}\) is the identity matrix.
That is,
\(\mathcal{A}\) is unitary if and only if the frontal slices of \(L(\mathcal{A})\) are unitary matrices.

Observe that 
\begin{equation}\label{eqlat}
  L(\mathcal{A}^t)=L(\mathcal{A})^t, {\rm\,\,i.e.,\,\,} L(\mathcal{A}^t)^{(k)}=(L(\mathcal{A})^{(k)})^T,\,\,k=1,\ldots,p,
\end{equation} since for any \(j \in J\), \(i \in I\),
  \[
  \begin{aligned}
    L(\mathcal{A}^t)(j,i)
  =&L\left(\mathcal{A}^t(j,i)\right)\\
  =&L\left(\mathcal{A}(i,j)\right)\\
  =&L(\mathcal{A})(i,j)\\
  =&\left(L(\mathcal{A})\right)^t(j,i),
  \end{aligned}  
  \]
where the first and third equalities hold by definition of the operator \(L\).
Therefore, we have for \(i=1,\ldots,p,\)
  \[
    \begin{aligned}
      L(\mathcal{A}^t)^{(i)}  \left(L(\mathcal{A}^t)^{(i)}\right)^{H} 
      \overset{(\ref{eqlat})}{=} &\left(L(\mathcal{A})^{(i)}\right)^T \left(\left(L(\mathcal{A})^{(i)}\right)^T\right)^{H} \\
      = &\overline{\left(L(\mathcal{A})^{(i)}\right)^H}\,\,  \overline{L(\mathcal{A})^{(i)}} \\
      = &\overline{\left(L(\mathcal{A})^{(i)}\right)^H\,\, L(\mathcal{A})^{(i)}},\\
    \end{aligned}
  \]
  where \(\overline{L(\mathcal{A})^{(i)}}\) refers to the matrix obtained by taking the conjugate of 
  \(L(\mathcal{A})^{(i)}\) and the last equality comes from the fact that 
  \(\bar{A}\bar{B}=\overline{AB}\) for matrices \(A,B\).

  Similarly we have for \(i=1,\ldots,p,\)
  \[\left(L(\mathcal{A}^t)^{(i)}\right)^{H} L(\mathcal{A}^t)^{(i)}  
  =\overline{ L(\mathcal{A})^{(i)}\,\,\left(L(\mathcal{A})^{(i)}\right)^H}.\]

  Now we are ready to prove the proposition.

If \(\mathcal{A}\) is unitary, then  for \(i=1,\ldots,p,\)
  \[L(\mathcal{A}^t)^{(i)}  \left(L(\mathcal{A}^t)^{(i)}\right)^{H}
  = \overline{\left(L(\mathcal{A})^{(i)}\right)^H\,\, L(\mathcal{A})^{(i)}}
  =\overline{I}=I,\]
  and 
  \[\left(L(\mathcal{A}^t)^{(i)}\right)^{H}L(\mathcal{A}^t)^{(i)}  
  = \overline{L(\mathcal{A})^{(i)}\,\,\left(L(\mathcal{A})^{(i)}\right)^H}
  =\overline{I}=I,\]
  so \(\mathcal{A}^t\) is unitary.

  Conversely, if \(\mathcal{A}^t\) is unitary, then for \(i=1,\ldots,p,\)
  \[\left(L(\mathcal{A})^{(i)}\right)^H\,\, L(\mathcal{A})^{(i)}
  =\overline{L(\mathcal{A}^t)^{(i)}  \left(L(\mathcal{A}^t)^{(i)}\right)^{H}}
  =\overline{I}=I,\]
  and
  \[L(\mathcal{A})^{(i)}\,\,\left(L(\mathcal{A})^{(i)}\right)^H
  =\overline{\left(L(\mathcal{A}^t)^{(i)}\right)^{H}L(\mathcal{A}^t)^{(i)}}
  =\overline{I}=I,\]
  so \(\mathcal{A}\) is unitary.
\end{proof}

Next we extend some familiar notions from matrices to tubal matrices.

\begin{definition}
  Let \(\mathcal{A}=(\mathbf{a}_{ij}) \in \mathbb{C}_p^{I \times J}\) and \(\mathcal{B}=(\mathbf{b}_{kl}) \in \mathbb{C}_p^{K \times L}\)be tubal matrices.
  The (tensor-tensor product based) Kronecker product of \(\mathcal{A}\) and \(\mathcal{B}\) is defined to be the tubal matrix 
  \(\mathcal{A} \ot \mathcal{B} \in \mathbb{C}_p^{IK \times JL}\) whose \(((i,k),(j,l))\)-th entry is the tubal scalar
  \(\mathbf{a}_{ij}*\mathbf{b}_{kl}\).
  That is,
  \[\mathcal{A} \ot \mathcal{B} = (\mathbf{a}_{i,j}*\mathcal{B})_{(i,j)\in I \times J},\]
  where \(\mathbf{a}_{i,j}*\mathcal{B}=(\mathbf{a}_{i,j}*\mathbf{b}_{k,l})_{(k,l)\in K \times L}\) is the tubal matrix obtained 
  by taking the tensor-tensor product of \(\mathbf{a}_{i,j}\) and each entry of \(\mathcal{B}\).
  Equivalently, the \((i+(k-1)K,j+(l-1)L)\)-th entry of \(\mathcal{A} \ot \mathcal{B}\) is \(\mathbf{a}_{ij}*\mathbf{b}_{kl}\).

  In general, for tubal matrices \(\mathcal{A}_{N}\in \mathbb{C}_p^{I_N \times J_N}, \cdots, \mathcal{A}_{2} \in \mathbb{C}_p^{I_2 \times J_2},\mathcal{A}_{1}\in \mathbb{C}_p^{I_1 \times J_1}\), 
  we define their Kronecker product 
  \[\mathcal{A}_{N} \ot \mathcal{A}_{N-1} \ot \cdots \mathcal{A}_{2}\ot \mathcal{A}_{1} \in \mathbb{C}_p^{I_N \cdots I_2 I_1 \times J_N \cdots J_2 J_1}\]
  to be the tubal matrix of which the \(((i_N,\ldots,i_2,i_1),(j_N,\ldots,j_2,j_1))\)-th entry
  (i.e., the \((i_1+(i_2-1)I_1+\cdots+(i_N-1)I_1\cdots I_{N-1},j_1+(j_2-1)J_1+\cdots+(j_N-1)J_1\cdots J_{N-1})\)-th entry) is
\begin{equation}\label{index_kro}
  \mathcal{A}_{N}(i_N,j_N)*\cdots *\mathcal{A}_{2}(i_2,j_2) * \mathcal{A}_{1}(i_1,j_1).
\end{equation}
\end{definition}

We need the following properties of the Kronecker product for tubal matrices.

\begin{proposition}\label{prop:kro}
  Let \(\mathcal{A}, \mathcal{B}, \mathcal{C}, \mathcal{D}\) be tubal matrices over \(\mathbb{C}_p\) of appropriate size.
  
  {\rm (i)} \((\mathcal{A} \ot \mathcal{B})^t = \mathcal{A}^t \ot \mathcal{B}^t\). 
  
  {\rm (ii)} \((\mathcal{A} \ot \mathcal{B})^H = \mathcal{A}^H \ot \mathcal{B}^H\).

  {\rm (iii)} \((\mathcal{A} \ot \mathcal{B})*(\mathcal{C} \ot \mathcal{D}) = (\mathcal{A}*\mathcal{C})\ot (\mathcal{B}*\mathcal{D})\).

  {\rm (iv)} \((\mathcal{A} \ot \mathcal{B}) \ot \mathcal{C} = \mathcal{A} \ot (\mathcal{B} \ot \mathcal{C})\).

  {\rm (v)} If \,\(\mathcal{U}_1 \in \mathbb{C}_p^{I\times I}, \mathcal{U}_2  \in \mathbb{C}_p^{J\times J}\) are unitary, then 
  \(\mathcal{U}_1 \ot \mathcal{U}_2\) is also unitary.
\end{proposition}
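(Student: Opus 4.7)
The plan is to verify the five properties by direct entrywise calculation, leveraging three ingredients already in hand: the commutativity, associativity, and distributivity of the tensor-tensor product of tubal scalars; the identities $(\mathcal{A}*\mathcal{B})^t = \mathcal{B}^t*\mathcal{A}^t$ (Proposition 3.1) and $(\mathcal{A}*\mathcal{B})^H = \mathcal{B}^H*\mathcal{A}^H$ from Section 2; and the matrix-multiplication-style formula (\ref{eq:matrixproduct}) for $*$ of tubal matrices.

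For (i) and (ii) the approach is to compare the $((j,l),(i,k))$-th entries of both sides. On the left, reading off the definitions of the Kronecker product and of the small-t (resp.\ Hermitian) transpose, one gets $\mathbf{a}_{ij}*\mathbf{b}_{kl}$ in case (i), and in case (ii) the tubal-scalar Hermitian transpose $(\mathbf{a}_{ij}*\mathbf{b}_{kl})^H$, which by commutativity of $*$ at the tubal-scalar level equals $\mathbf{a}_{ij}^H*\mathbf{b}_{kl}^H$. On the right, one reads off $\mathcal{A}^t(j,i)*\mathcal{B}^t(l,k)=\mathbf{a}_{ij}*\mathbf{b}_{kl}$ and $\mathcal{A}^H(j,i)*\mathcal{B}^H(l,k)=\mathbf{a}_{ij}^H*\mathbf{b}_{kl}^H$; the latter uses the observation, read directly from Definition 2.7, that for a tubal matrix one has the entrywise relation $\mathcal{A}^H(j,i)=(\mathcal{A}(i,j))^H$.

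For (iii) I would expand the $((i,k),(m,n))$-th entry of $(\mathcal{A}\ot\mathcal{B})*(\mathcal{C}\ot\mathcal{D})$ via (\ref{eq:matrixproduct}) and the Kronecker definition, obtaining $\sum_{j,l}(\mathbf{a}_{ij}*\mathbf{b}_{kl})*(\mathbf{c}_{jm}*\mathbf{d}_{ln})$. Commutativity and associativity of $*$ on tubal scalars let me regroup each summand as $(\mathbf{a}_{ij}*\mathbf{c}_{jm})*(\mathbf{b}_{kl}*\mathbf{d}_{ln})$, and distributivity of $*$ over addition factors the double sum as $\bigl(\sum_j \mathbf{a}_{ij}*\mathbf{c}_{jm}\bigr)*\bigl(\sum_l \mathbf{b}_{kl}*\mathbf{d}_{ln}\bigr)=(\mathcal{A}*\mathcal{C})(i,m)*(\mathcal{B}*\mathcal{D})(k,n)$, which is exactly the $((i,k),(m,n))$-th entry of $(\mathcal{A}*\mathcal{C})\ot(\mathcal{B}*\mathcal{D})$. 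Part (iv) is then a pure associativity check on the indexed formula (\ref{index_kro}) and follows at once from associativity of $*$.

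Finally, for (v), the strategy is to combine (ii) and (iii):
\[
(\mathcal{U}_1\ot\mathcal{U}_2)*(\mathcal{U}_1\ot\mathcal{U}_2)^H
= (\mathcal{U}_1\ot\mathcal{U}_2)*(\mathcal{U}_1^H\ot\mathcal{U}_2^H)
= (\mathcal{U}_1*\mathcal{U}_1^H)\ot(\mathcal{U}_2*\mathcal{U}_2^H)
= \mathcal{I}_I \ot \mathcal{I}_J,
\]
with the analogous computation for $(\mathcal{U}_1\ot\mathcal{U}_2)^H*(\mathcal{U}_1\ot\mathcal{U}_2)$. What remains—and this is the main (if mild) obstacle—is the small lemma $\mathcal{I}_I\ot\mathcal{I}_J=\mathcal{I}_{IJ}$. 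I would prove it by first noting that the identity tubal matrix satisfies $\mathcal{I}(i,j)=\delta_{ij}\mathbf{e}$, where $\mathbf{e}=L^{-1}(1,\ldots,1)$ is the identity tubal scalar; this follows from $L(\mathcal{I})^{(k)}$ being the usual identity matrix for every frontal slice $k$. Then the $((i,k),(j,l))$-th entry of $\mathcal{I}_I\ot\mathcal{I}_J$ is $\delta_{ij}\delta_{kl}\,\mathbf{e}*\mathbf{e}=\delta_{(i,k),(j,l)}\,\mathbf{e}$, which is precisely the corresponding entry of $\mathcal{I}_{IJ}$. Everything else in the proof reduces to routine multi-index bookkeeping.
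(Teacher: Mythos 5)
Your proof is correct, and for parts (i), (iii), (iv) it matches the paper's argument (entrywise or blockwise index bookkeeping plus commutativity/associativity/distributivity of $*$ on tubal scalars). Where you genuinely diverge is part (ii): the paper works in the $L$-domain, introducing an auxiliary Hadamard-based product $\oh$ with $L(\mathcal{A}\ot\mathcal{B})=L(\mathcal{A})\oh L(\mathcal{B})$ and $(\mathcal{A}\oh\mathcal{B})^{(i)}=\mathcal{A}^{(i)}\otimes\mathcal{B}^{(i)}$, and then imports the classical matrix identity $(M\otimes N)^T=M^T\otimes N^T$ slice by slice; you instead stay entirely at the level of tubal-scalar entries, using the entrywise characterization $\mathcal{A}^H(j,i)=(\mathcal{A}(i,j))^H$ (which does follow from Definition~\ref{def:herm}, though it takes one line of unpacking since that definition is phrased via frontal slices of $L(\mathcal{A})$) together with $(\mathbf{a}*\mathbf{b})^H=\mathbf{b}^H*\mathbf{a}^H=\mathbf{a}^H*\mathbf{b}^H$. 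Your route is shorter and avoids the auxiliary construction, at the cost of not exhibiting the frontal-slice picture $L(\mathcal{A}\ot\mathcal{B})^{(i)}=L(\mathcal{A})^{(i)}\otimes L(\mathcal{B})^{(i)}$, which the paper's detour makes explicit and which is conceptually useful elsewhere (it shows the tubal Kronecker product is the ordinary Kronecker product in the transform domain). In part (v) you also supply the small lemma $\mathcal{I}_I\ot\mathcal{I}_J=\mathcal{I}_{IJ}$ via $\mathcal{I}(i,j)=\delta_{ij}\mathbf{e}$ with $\mathbf{e}=L^{-1}\left((1,\ldots,1)\right)$ and $\mathbf{e}*\mathbf{e}=\mathbf{e}$, which the paper asserts without proof, and you check both orders of the product rather than only $(\mathcal{U}_1\ot\mathcal{U}_2)^H*(\mathcal{U}_1\ot\mathcal{U}_2)$; both are mild but genuine improvements in completeness.
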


\begin{proof}
  {\rm (i)} Let \(\mathcal{A} \in \mathbb{C}_p^{I \times J}\), \(\mathcal{B} \in \mathbb{C}_p^{K \times L}\).
  Then \(\mathcal{A} \ot \mathcal{B} \in \mathbb{C}_p^{IK \times JL}\),
  \((\mathcal{A} \ot \mathcal{B})^t \in \mathbb{C}_p^{JL \times IK}\).
   For \(i \in I, j \in J, k \in K, l \in L \),
    \[
    \begin{aligned}
     (\mathcal{A} \ot \mathcal{B})^t((j,l),(i,k))
     &=(\mathcal{A} \ot \mathcal{B})((i,k),(j,l))\\
     &=\mathcal{A}(i,j)*\mathcal{B}(k,l)\\
     &=\mathcal{A}^t(j,i)*\mathcal{B}^t(l,k)\\
     &=(\mathcal{A}^t \ot \mathcal{B}^t)((j,l),(i,k)).     
    \end{aligned}
    \]

  {\rm (ii)}  
    We introduce an auxiliary notation
    used only in this proof.

    For tubal matrices \(\mathcal{A} \in \mathbb{C}_p^{I \times J}\) and \(\mathcal{B} \in \mathbb{C}_p^{K \times L}\) 
    we define \(\mathcal{A} \oh \mathcal{B} \in \mathbb{C}_p^{IK \times JL}\) (with ``H'' referring to ``Hadamard'') 
    to be the tubal matrix whose 
    \(\left((i,k),(j,l)\right)\)-th entry is \( \mathcal{A}(i,j) \odot \mathcal{B}(k,l)\), where \(\odot\) is the Hadamard product.
In other words, for \(i=1,\ldots,p,\) we have
\begin{equation}\label{eqH}
  \left(\mathcal{A} \oh \mathcal{B}\right)^{(i)} = \mathcal{A}^{(i)} \otimes \mathcal{B}^{(i)},
\end{equation}
where \(\mathcal{A}^{(i)} \otimes \mathcal{B}^{(i)}\) refers to the Kronecker product in the usual sense between matrices $\mathcal A^{(i)}$ and $\mathcal B^{(i)}$.

Note that 
    \begin{equation}\label{eq1}L(\mathcal{A} \ot \mathcal{B}) = L(\mathcal{A}) \oh L(\mathcal{B}),\end{equation}
   since 
   the \(\left((i,k),(j,l)\right)\)-th entry of \(L(\mathcal{A} \ot \mathcal{B})\) is 
   \[\begin{aligned}
     L\left(\mathcal{A}(i,j)*\mathcal{B}(k,l)\right)
     \overset{(\ref{def:tproductubalscalar})}{=}&L\left(\mathcal{A}(i,j)\right) \odot L\left(\mathcal{B}(k,l)\right)\\
     =&L(\mathcal{A})(i,j) \odot L(\mathcal{B})(k,l),
   \end{aligned}\]
   which is also the \(\left((i,k),(j,l)\right)\)-th entry of \(L(\mathcal{A}) \oh L(\mathcal{B})\).

The formula for Hermitian transpose then follows, since for \(i=1,\ldots,p,\)
  \[
    \begin{aligned}
      \left(L\left((\mathcal{A} \ot \mathcal{B})^H\right)\right)^{(i)}
    \overset{(\ref{def:herm})}{=}&\left(\left(L(\mathcal{A} \ot \mathcal{B})\right)^{(i)}\right)^H\\
      =&\overline{\left(\left(L(\mathcal{A} \ot \mathcal{B})\right)^{(i)}\right)^T}\\
      \overset{(\ref{eq1})}{=}&\overline{\left(\left(L(\mathcal{A}) \oh L(\mathcal{B})\right)^{(i)}\right)^T}      \\
      \overset{\cref{eqH}}{=}&\overline{\left(L(\mathcal{A})^{(i)} \otimes L(\mathcal{B})^{(i)}\right)^T}\\
      =&\overline{\left(L(\mathcal{A})^{(i)}\right)^T \otimes \left(L(\mathcal{B})^{(i)}\right)^T}\\
      =&\overline{\left(L(\mathcal{A})^{(i)}\right)^T} \otimes \overline{\left(L(\mathcal{B})^{(i)}\right)^T}\\
      =&\left(L(\mathcal{A})^{(i)}\right)^H \otimes \left(L(\mathcal{B})^{(i)}\right)^H\\
      \overset{(\ref{def:herm})}{=}&L(\mathcal{A}^H)^{(i)} \otimes L(\mathcal{B}^H)^{(i)}\\
      \overset{\cref{eqH}}{=}&\left(L(\mathcal{A}^H) \oh L(\mathcal{B}^H)\right)^{(i)}  \\
      \overset{\cref{eq1}}{=}&\left(L(\mathcal{A}^H \ot \mathcal{B}^H)\right)^{(i)},
    \end{aligned}
  \]
which means \(L\left((\mathcal{A} \ot \mathcal{B})^H\right)=L\left(\mathcal{A}^H \ot \mathcal{B}^H\right)\), implying that
\((\mathcal{A} \ot \mathcal{B})^H= \mathcal{A}^H \ot \mathcal{B}^H.\) 
In the derivation we used properties of the Kronecker product for matrices.

  {\rm (iii)} Suppose \(\mathcal{A} \in \mathbb{C}_p^{I \times J}, \mathcal{B} \in \mathbb{C}_p^{L \times M},
  \mathcal{C} \in \mathbb{C}_p^{J\times K},\mathcal{D} \in \mathbb{C}_p^{M \times N}\).
  As block tubal matrices, we have
  \[
    \begin{aligned}
      \left((\mathcal{A} \ot \mathcal{B})*(\mathcal{C} \ot \mathcal{D})\right)(i,k) 
      &= \sum_j \left(\mathcal{A}(i,j)*\mathcal{B}\right)*\left(\mathcal{C}(j,k)*\mathcal{D}\right)\\
      &= \sum _j \left(\mathcal{A}(i,j)*\mathcal{C}(j,k)\right) *(\mathcal{B}*\mathcal{D})\\
      &=(\mathcal{A}*\mathcal{C})(i,k)*(\mathcal{B}*\mathcal{D})\\
      &= \left((\mathcal{A}*\mathcal{C})\ot (\mathcal{B}*\mathcal{D})\right)(i,k),
    \end{aligned}
  \]
where the second equality comes from the associativity and commutativity of the tensor-tensor product for tubal scalars.

  {\rm (iv)} Suppose \(\mathcal{A} \in \mathbb{C}_p^{I \times J}, \mathcal{B} \in \mathbb{C}_p^{K \times L},
  \mathcal{C} \in \mathbb{C}_p^{M \times N}\).
  Both \((\mathcal{A} \ot \mathcal{B}) \ot \mathcal{C}\)
  and \(\mathcal{A} \ot (\mathcal{B} \ot \mathcal{C})\) 
  are tubal matrices of size \(IKM \times JLN\) whose \(((i,k,m),(j,l,n))\)-th entry is
\[\mathcal{A}(i,j)*\mathcal{B}(k,l)*\mathcal{C}(m,n).\]

{\rm (v)} Since \(\mathcal{U}_1 \in \mathbb{C}_p^{I\times I}, \mathcal{U}_2  \in \mathbb{C}_p^{J\times J}\) are unitary, we have
\[\begin{aligned}
(\mathcal{U}_1 \ot \mathcal{U}_2)^H * (\mathcal{U}_1 \ot \mathcal{U}_2)
\overset{({\ref{prop:kro}{,~{\rm ii}}})}{=}&(\mathcal{U}_1^H \ot \mathcal{U}_2^H)*(\mathcal{U}_1 \ot \mathcal{U}_2)\\
\overset{({\ref{prop:kro}{,~{\rm iii}}})}{=}&(\mathcal{U}_1^H \ot \mathcal{U}_1)*(\mathcal{U}_2^H \ot \mathcal{U}_2)\\
=&\mathcal{I}_I \ot \mathcal{I}_J\\
=&\mathcal{I}_{IJ}.
\end{aligned}\]
\end{proof}

\begin{definition}
  A tubal tensor of order \(N\) with entries in \(\mathbb{C}_p\) is an element \(\mathcal{A} \in \mathbb{C}_p^{I_1 \times I_2 \times \cdots \times I_N}\),
  that is, a multi-way array \((\mathbf a_{i_1i_2 \cdots i_N})_{(i_1,i_2, \cdots, i_N) \in I_1 \times I_2 \times \cdots \times I_N}\), where
  \(\mathbf a_{i_1i_2 \cdots i_N} \in \mathbb{C}_p\) are tubal scalars of length \(p\).
  In particular, a tubal tensor of order \(2\) is a tubal matrix.
\end{definition}

Since \(\mathbb{C}_p\) is isomorphic to \(\mathbb{C}^p\), \(\mathbb{C}_p^{I_1 \times I_2 \times \cdots \times I_N}\) is isomorphic 
to \(\mathbb{C}^{I_1 \times I_2 \times \cdots \times I_N \times p}\).
Thus a tubal tensor of order \(N\) in \(\mathbb{C}_p^{I_1 \times I_2 \times \cdots \times I_N}\) is essentially a tensor of order \(N+1\) in 
\(\mathbb{C}^{I_1 \times I_2 \times \cdots \times I_{N} \times p}\).
Conversely, every tensor of order \(N+1\) in \(\mathbb{C}^{I_1 \times I_2 \times \cdots \times I_{N+1}}\) can be regarded as a tubal 
tensor of order \(N\) in \(\mathbb{C}_{I_{N+1}}^{I_1 \times I_2 \times \cdots \times I_{N}}\). 
In particular, third-order tensors can be identified as tubal matrices. 

We can unfold a tubal tensor into a tubal matrix. For the mode-\(n\) unfolding we follow the convention of \cite{kolda2010tensor}.

\begin{definition}\label{def:unfold}
  Let \(\mathcal{A} \in \mathbb{C}_p^{I_1 \times I_2 \times \cdots \times I_N}\) be a tubal tensor of order \(N\) 
  with entries in \(\mathbb{C}_p\).
  We define its mode-\(n\) unfolding (where \(1 \le n \le N\)) to be the tubal matrix 
  \(\mathcal{A}_{(n)} \in \mathbb{C}_p^{I_n \times (I_1 I_2 \cdots I_{n-1}  I_{n+1} \cdots  I_N)}\) 
  such that the \((i_1,\ldots,i_N)\)-th tubal tensor entry maps to the \((i_n,j)\)-th tubal matrix entry where
  \[j = 1+ \sum^{N}_{k=1,k\neq n}((i_k-1)\prod^{k-1}_{m=1,m\neq n}I_m).\] 
\end{definition}
In particular, for the mode-\(1\) unfolding, the \((i_1,\ldots,i_N)\)-th entry of \(\mathcal{A}\) is mapped to 
the \((i_1,j)\)-th entry of \(\mathcal{A}_{(1)}\), where
\begin{equation}\label{eq:mode1}
  j= i_2 + (i_3 - 1)I_2 + (i_4 - 1)I_2I_3 + \cdots + (i_N - 1)I_2I_3 \cdots I_{N-1}.
\end{equation}

Next we introduce the notion of mode-\(n\) t-rank of a tubal tensor.

\begin{definition}
  Let \(\mathcal{A} \in \mathbb{C}_p^{I_1 \times I_2 \times \cdots \times I_N}\) be a tubal tensor of order \(N\) and \(\mathcal{A}_{(n)}\)
  its mode-\(n\) unfolding. Then the t-rank of the tubal matrix \(\mathcal{A}_{(n)}\) is called the mode-\(n\) t-rank of 
  \(\mathcal{A}\).
\end{definition}

Finally we define the \(n\)-mode product based on the
tensor-tensor product and prove a proposition that connects the \(n\)-mode product with 
the tensor-tensor product of tubal matrix unfoldings.

\begin{definition}
  The \(n\)-mode product of a tubal tensor \(\mathcal{A} \in \mathbb{C}_p^{I_1 \times \cdots \times I_N}\) by a tubal matrix 
  \(\mathcal{U} \in \mathbb{C}_p^{J \times I_n}\) is the tubal tensor \(\mathcal{A}*_n \mathcal{U} \in \mathbb{C}_p^{I_1 \times \cdots \times I_{n-1} \times J \times I_{n+1} \times \cdots \times I_N} \) 
  obtained by taking the tensor-tensor product of \(\mathcal{U}\) and
  the mode-\(n\) tubal vectors of \(\mathcal{A}\). Entrywisely, we have
  \[(\mathcal{A}*_n \mathcal{U})(i_1,\ldots, i_{n-1}, j,  i_{n+1 },\ldots, i_N) = \sum_{i_n=1}^{I_n}\mathcal{A}(i_1,\ldots, i_n, \cdots i_N)*
  \mathcal{U}(j,i_n).\]
\end{definition}

From the above definition, \(\mathcal{A}*_n \mathcal{U} = \mathcal{B}\) is the same as \(\mathcal{U}*\mathcal{A}_{(n)}=\mathcal{B}_{(n)}\).

\begin{proposition}
  Let \(\mathcal{A} \in \mathbb{C}_p^{I_1 \times \cdots \times I_N}\).
  The following properties of the \(n\)-mode product based on tensor-tensor product hold.

  {\rm (i)} \((\mathcal{A}*_n \mathcal{F})*_m \mathcal{G} =(\mathcal{A}*_m \mathcal{G})*_n \mathcal{F}\) if \(m \neq n\).

  {\rm (ii)} \((\mathcal{A}*_n \mathcal{F})*_n \mathcal{G} = \mathcal{A}*_n(\mathcal{G}*\mathcal{F}).\)
\end{proposition}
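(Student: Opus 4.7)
The plan is to verify both identities by direct entrywise computation, leveraging the associativity and commutativity of the tensor-tensor product of tubal scalars (which makes $\mathbb{C}_p$ a commutative ring) and the definition of the tubal matrix product given in \eqref{eq:matrixproduct}.

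For \textrm{(i)}, let $\mathcal{F}\in\mathbb{C}_p^{J\times I_n}$ and $\mathcal{G}\in\mathbb{C}_p^{K\times I_m}$ with $m\ne n$. I would fix an index tuple for the output and expand the left-hand side using the definition of $*_n$ followed by $*_m$; this produces a single double sum whose generic term is of the form $\mathcal{A}(\ldots,i_n,\ldots,i_m,\ldots)*\mathcal{F}(j,i_n)*\mathcal{G}(k,i_m)$. Expanding the right-hand side in the opposite order produces the same double sum but with the factors $\mathcal{F}(j,i_n)$ and $\mathcal{G}(k,i_m)$ swapped. Since the tensor-tensor product of tubal scalars is commutative and associative, the two expressions are equal entrywise. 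The only thing to be careful about is bookkeeping on the index positions (since the mode-$m$ slot has been renamed $k$ on the left, but this renaming is identical on both sides), but no substantive obstacle arises.

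For \textrm{(ii)}, let $\mathcal{F}\in\mathbb{C}_p^{J\times I_n}$ and $\mathcal{G}\in\mathbb{C}_p^{K\times J}$. Expanding the left-hand side entrywise gives
\[
\bigl((\mathcal{A}*_n\mathcal{F})*_n\mathcal{G}\bigr)(\ldots,k,\ldots) \;=\; \sum_{j=1}^{J}\Bigl(\sum_{i_n=1}^{I_n}\mathcal{A}(\ldots,i_n,\ldots)*\mathcal{F}(j,i_n)\Bigr)*\mathcal{G}(k,j).
\]
Using associativity and distributivity of $*$ over addition, I can pull $\mathcal{G}(k,j)$ inside and swap the order of summation, obtaining
\[
\sum_{i_n=1}^{I_n}\mathcal{A}(\ldots,i_n,\ldots)*\Bigl(\sum_{j=1}^{J}\mathcal{G}(k,j)*\mathcal{F}(j,i_n)\Bigr).
\]
By \eqref{eq:matrixproduct}, the inner sum equals $(\mathcal{G}*\mathcal{F})(k,i_n)$, which by the definition of $*_n$ is precisely the $(\ldots,k,\ldots)$-entry of $\mathcal{A}*_n(\mathcal{G}*\mathcal{F})$. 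A small point to watch is that commutativity of tubal scalar multiplication is used implicitly when distributing $\mathcal{G}(k,j)$ through the inner sum (so that $\mathcal{F}(j,i_n)*\mathcal{G}(k,j)=\mathcal{G}(k,j)*\mathcal{F}(j,i_n)$, matching the form that \eqref{eq:matrixproduct} expects).

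Neither identity involves a genuinely hard step; the main care is notational, namely tracking which index slot is contracted at each stage and invoking the fact that the tensor-tensor product of tubal scalars is commutative and associative, so that the three-factor products of tubal scalars appearing in the double sums can be freely reordered and re-parenthesized. This mirrors the corresponding proof for ordinary $n$-mode products with matrices in the Kolda--Bader framework, with scalar multiplication replaced by the tubal scalar product.
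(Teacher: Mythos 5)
Your proposal is correct and follows essentially the same route as the paper: both identities are verified entrywise by expanding the nested sums, swapping the order of summation, and invoking the associativity and commutativity of the tensor-tensor product of tubal scalars (together with \eqref{eq:matrixproduct} to recognize the inner sum in (ii) as an entry of $\mathcal{G}*\mathcal{F}$). No meaningful differences from the paper's own argument.
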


\begin{proof}
  {\rm (i)} Let \(m>n\).
  For the \((i_1,\ldots,i_{n-1},j,i_{n+1},\ldots,i_{m-1}, k,i_{m+1},\ldots,i_N)\)-th entry, we have
  \[
  \begin{aligned}
    &\sum_{i_m}^{I_m} \left(\left(\sum_{i_n=1}^{I_n}\mathcal{A}(i_1,\ldots,i_N)*\mathcal{F}(j,i_n)
    \right)*\mathcal{G}(k,i_m)\right)\\
  =&\sum_{i_n}^{I_n} \left(\left(\sum_{i_m=1}^{I_m}\mathcal{A}(i_1,\ldots,i_N)*\mathcal{G}(k,i_m)\right)*\mathcal{F}(j,i_n)\right),
  \end{aligned}  
  \]
where the equality comes from the associativity and commutativity of the tensor-tensor product of tubal scalars.

  {\rm (ii)} Let \(\mathcal{F}\in \mathbb{C}_p^{J \times I_n}, \mathcal{G} \in \mathbb{C}_p^{K \times J}\).
  For the \((i_1,\ldots,i_{n-1},k,i_{n+1},\ldots,i_N)\)-th entry, we have
  \[
    \begin{aligned}
      &\sum_{j}^{J} \left\{\left(\sum_{i_n=1}^{I_n}\mathcal{A}(i_1,\ldots,i_N)*\mathcal{F}(j,i_n)\right)*\mathcal{G}(k,j)\right\}\\
      =& \sum_{i_n} \left\{\mathcal{A}(i_1,\ldots,i_N)*\left(\sum_{j}\mathcal{F}(j,i_n)*\mathcal{G}(k,j)\right)\right\}\\
      =&\sum_{i_n} \left\{\mathcal{A}(i_1,\ldots,i_N)*\left(\sum_{j}\mathcal{G}(k,j)*\mathcal{F}(j,i_n)\right)\right\}.
    \end{aligned}
  \]
where the equalities come from the associativity and commutativity of the tensor-tensor product of tubal scalars respectively.
\end{proof}

The following proposition is the most crucial one in deriving the existence of Hot-SVD for tubal tensors.

\begin{proposition}[Tubal matrix representation of \(n\)-mode product based on tensor-tensor product]\label{prop:matrep}
  Let \(\mathcal{A}, \mathcal{S} \in \mathbb{C}_p^{I_1 \times \cdots \times I_N}\),
  \(\mathcal{U}_{1} \in \mathbb{C}_p^{I_1\times I_1},\ldots,\mathcal{U}_{N}\in \mathbb{C}_p^{I_N\times I_N}\).
  Then
  \[\mathcal{A} = \mathcal{S}*_1 \mathcal{U}_{1} *_2 \mathcal{U}_{2} *_3 \cdots *_N \mathcal{U}_{N}\] 
  is equivalent to 
  \[ \mathcal{A}_{(n)} = \mathcal{U}_{n}*\mathcal{S}_{(n)}*\left(\mathcal{U}_{N} \ot \cdots \ot \mathcal{U}_{n+1} \ot \mathcal{U}_{n-1} \ot \cdots \ot \mathcal{U}_{1}\right)^t.\]
\end{proposition}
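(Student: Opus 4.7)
The plan is to reduce the stated equivalence to a single entrywise identity. Since the mode-$n$ unfolding is a bijection between $\mathbb{C}_p^{I_1 \times \cdots \times I_N}$ and $\mathbb{C}_p^{I_n \times (I_1 \cdots \widehat{I_n}\cdots I_N)}$, the equality $\mathcal{A} = \mathcal{S}*_1 \mathcal{U}_1 *_2 \cdots *_N \mathcal{U}_N$ holds if and only if $\mathcal{A}_{(n)} = (\mathcal{S}*_1 \mathcal{U}_1 *_2 \cdots *_N \mathcal{U}_N)_{(n)}$. It therefore suffices to prove the single identity
\begin{equation*}
(\mathcal{S}*_1 \mathcal{U}_1 *_2 \cdots *_N \mathcal{U}_N)_{(n)} \;=\; \mathcal{U}_n * \mathcal{S}_{(n)} * \bigl(\mathcal{U}_N \ot \cdots \ot \mathcal{U}_{n+1} \ot \mathcal{U}_{n-1} \ot \cdots \ot \mathcal{U}_1\bigr)^t,
\end{equation*}
which I will verify entry by entry.

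First, iterating the definition of the $n$-mode product based on tensor-tensor product and using the associativity and commutativity of the tensor-tensor product of tubal scalars, I obtain
\begin{equation*}
\mathcal{A}(i_1,\ldots,i_N) \;=\; \sum_{k_1,\ldots,k_N} \mathcal{S}(k_1,\ldots,k_N)*\mathcal{U}_1(i_1,k_1)*\cdots*\mathcal{U}_N(i_N,k_N);
\end{equation*}
then \cref{def:unfold} rewrites this as a formula for $\mathcal{A}_{(n)}(i_n,j)$ in which the linearized column index $j$ encodes $(i_1,\ldots,i_{n-1},i_{n+1},\ldots,i_N)$ via the prescribed mixed-radix formula.

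Next, I apply \eqref{eq:matrixproduct} twice to the right-hand side to get
\begin{equation*}
\sum_{k_n,k} \mathcal{U}_n(i_n,k_n) * \mathcal{S}_{(n)}(k_n,k) * \bigl[(\mathcal{U}_N \ot \cdots \ot \mathcal{U}_{n+1} \ot \mathcal{U}_{n-1} \ot \cdots \ot \mathcal{U}_1)^t\bigr](k,j).
\end{equation*}
By the definition of the small-t transpose, the bracketed factor equals the $(j,k)$-th entry of the Kronecker product $\mathcal{U}_N \ot \cdots \ot \mathcal{U}_{n+1} \ot \mathcal{U}_{n-1} \ot \cdots \ot \mathcal{U}_1$, which by \eqref{index_kro} evaluates to $\mathcal{U}_N(i_N,k_N)*\cdots*\mathcal{U}_{n+1}(i_{n+1},k_{n+1})*\mathcal{U}_{n-1}(i_{n-1},k_{n-1})*\cdots*\mathcal{U}_1(i_1,k_1)$. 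Substituting $\mathcal{S}_{(n)}(k_n,k) = \mathcal{S}(k_1,\ldots,k_N)$ and invoking commutativity once more, the resulting sum reproduces exactly the one obtained in the previous step.

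The main obstacle is the index bookkeeping: I must verify that the mixed-radix decomposition of the column index $j$ used by the mode-$n$ unfolding in \cref{def:unfold} matches exactly the row-index decomposition produced by \eqref{index_kro} for the Kronecker product $\mathcal{U}_N \ot \cdots \ot \mathcal{U}_{n+1} \ot \mathcal{U}_{n-1} \ot \cdots \ot \mathcal{U}_1$. A direct comparison shows that in both conventions the stride attached to $i_k$ is $\prod_{m<k,\,m\neq n} I_m$, which is precisely why $\mathcal{U}_n$ must be skipped while the remaining factors appear in reverse order. The choice of the face-wise (small-t) rather than the Hermitian transpose is likewise essential: the algebraic identity $(\mathcal{A}*\mathcal{B})^t = \mathcal{B}^t * \mathcal{A}^t$ from \cref{prop:smallttrans} is what allows the row/column swap of the Kronecker product to be carried out purely by index manipulation, without the complex conjugation that the Hermitian transpose would introduce and that would break the entrywise matching.
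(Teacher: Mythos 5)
Your proposal is correct and follows essentially the same route as the paper's proof: an entrywise verification that expands the right-hand side via \eqref{eq:matrixproduct}, identifies the entries of the transposed Kronecker product through \eqref{index_kro} and the small-t transpose, and matches the result against the iterated $n$-mode product formula, with the equivalence of the two statements following from the bijectivity of the unfolding (the paper spells this out as two explicit implications, and it reduces to the case $n=1$ first rather than tracking the mode-$n$ strides directly as you do, but the index bookkeeping is the same either way).
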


\begin{proof}

 First of all observe that we can reduce to the case \(n=1\).

 To see this, for an arbitrary \(n\), define \(\widetilde{\mathcal{A}} \in \mathbb{C}_p^{I_n \times I_1 \times I_2 \times \cdots \times I_{n-1} \times I_{n+1} \times \cdots \times I_N}\) by 
 \[\widetilde{\mathcal{A}}(i_n,i_1,i_2,\ldots,i_{n-1},i_{n+1},\ldots,i_N)=\mathcal{A}(i_1,i_2,\ldots,i_n,\ldots,i_N).\]
 Then
 \[\mathcal{A}_{(n)}=\widetilde{\mathcal{A}}_{(1)}.\]
 Moreover
 \[\mathcal{A}=\mathcal{S} *_1 \mathcal{U}_{1}*_2\cdots *_N \mathcal{U}_{N}\]
 is the same as
 \[\widetilde{\mathcal{A}}=\widetilde{\mathcal{S}} *_1 \mathcal{U}_{n} *_2 \mathcal{U}_{1}  \cdots *_n \mathcal{U}_{n-1} *_{n+1} 
 \mathcal{U}_{n+1} \cdots *_N \mathcal{U}_{N},\]
 and
 \[\mathcal{A}_{(n)}=\mathcal{U}_{n} * S_{(n)} * 
 \left(\mathcal{U}_{N} \ot \cdots \ot \mathcal{U}_{n+1} \ot \mathcal{U}_{n-1} \ot \cdots \ot \mathcal{U}_{1}\right)^t\]
 is the same as
 \[\widetilde{\mathcal{A}}_{(1)}=\mathcal{U}_{n} * \widetilde{\mathcal{S}}_{(1)} * \left(\mathcal{U}_{N} \ot \cdots \ot \mathcal{U}_{n+1} \ot \mathcal{U}_{n-1}\ot \cdots \ot \mathcal{U}_{1} \right)^t.\]
 Thus we have reduced the general statement to the case \(n=1\).

 Now we prove the equivalence for \(n=1\). That is,
 \[\mathcal{A} = \mathcal{S}*_1 \mathcal{U}_{1} *_2 \mathcal{U}_{2} *_3 \cdots *_N \mathcal{U}_{N}\]
 if and only if 
 \[\mathcal{A}_{(1)} = \mathcal{U}_{1}*\mathcal{S}_{(1)}*\left(\mathcal{U}_{N} \ot \mathcal{U}_{N-1} \cdots \ot \mathcal{U}_{2}\right)^t.\]

 By our convention for unfolding \cref{eq:mode1}, we have
 \begin{equation}\label{index_A}\mathcal{A}_{(1)}(i_1,j)=\mathcal{A}(i_1,i_2,\ldots,i_N),\end{equation}
 where \begin{equation}\label{index_j}
  j= i_2 + (i_3 - 1)I_2 + (i_4 - 1)I_2I_3 + \cdots + (i_N - 1)I_2I_3 \cdots I_{N-1}.
 \end{equation}
 Similarly we have 
 \begin{equation}\label{index_S}\mathcal{S}_{(1)}(i'_1,j') = \mathcal{S}(i'_1,i'_2,\ldots,i'_N),\end{equation}
 where \begin{equation}\label{index_j'}j'= i'_2 + (i'_3 - 1)I_2 + (i'_4 - 1)I_2I_3+\cdots + (i'_N - 1)I_2I_3\cdots I_{N-1}.
 \end{equation}

 Define 
 \[\mathcal{V}=\left(\mathcal{U}_{N} \ot \cdots \ot \mathcal{U}_{2}\right)^t
 \overset{(\ref{prop:kro}{,~{\rm i}})}{=}
 {\mathcal{U}_{N}}^t \ot \cdots \ot {\mathcal{U}_{2}}^t \]
 and 
 \[\mathcal{W}=\mathcal{U}_{1}*\mathcal{S}_{(1)} * \mathcal{V}.\]

According to the definition of Kronecker product \cref{index_kro}, 
\(\mathcal{V}={\mathcal{U}_{N}}^t \ot \cdots \ot {\mathcal{U}_{2}}^t \in \mathbb{C}_{p}^{(I_NI_{N-1}\cdots I_2)\times (I_NI_{N-1}\cdots I_2)}\),
and the tubal scalar \({\mathcal{U}_{N}}^t(i'_N,i_N)*\cdots*{\mathcal{U}_{2}}^t(i'_2,i_2)\) is
the \[\Bigl(i'_2+(i'_3-1)I_2+\cdots+(i'_N-1)I_2\cdots I_{N-1},i_2 + (i_3 - 1)I_2 + \cdots + (i_N - 1)I_2I_3 \cdots I_{N-1}
\Bigr)\text{-th}\]
entry of \(\mathcal{V}\), which, according to \cref{index_j,index_j'}, is exactly the \((j',j)\)-th entry of \(\mathcal{V}\).
Therefore we have
\begin{equation}\label{index_V}
 \begin{aligned}
  \quad \mathcal{V}(j',j)
  =&\left({\mathcal{U}_{N}}^t \ot \cdots \ot {\mathcal{U}_{2}}^t\right) (j',j)
  =\,\,{\mathcal{U}_{N}}^t(i'_N,i_N)*\cdots*{\mathcal{U}_{2}}^t(i'_2,i_2).
 \end{aligned}  
\end{equation}
  
 Then we have
 \begin{equation}\label{index_W}
   \begin{aligned}
      & \quad \mathcal{W}(i_1,j)\\
      =&\left(\mathcal{U}_{1}*\mathcal{S}_{(1)} * \mathcal{V}\right)(i_1,j)\\
      =&\sum_{i'_1,j'} \mathcal{U}_{1}(i_1,i'_1) * \mathcal{S}_{(1)}(i'_1,j')* \mathcal{V}(j',j)\\
      =&\sum_{i'_1,i'_2,\ldots,i'_N} \mathcal{U}_{1}(i_1,i'_1) * \mathcal{S}(i'_1,i'_2,\ldots,i'_N)*
      \mathcal{U}_{N}^t(i'_N,i_N)*\cdots*
      \mathcal{U}_{2}^t(i'_2,i_2)\\
      =&\sum_{i'_1,i'_2,\ldots,i'_N} \mathcal{U}_{1}(i_1,i'_1) * \mathcal{S}(i'_1,i'_2,\ldots,i'_N) * \mathcal{U}_{N}(i_N,i'_N) * \cdots * \mathcal{U}_{2}(i_2,i'_2)\\
      =&\sum_{i'_1,\ldots,i'_N} \mathcal{S}(i'_1,\ldots,i'_N) * \mathcal{U}_{1}(i_1,i'_1) * \cdots *\mathcal{U}_{N}(i_N,i'_N),
   \end{aligned}
\end{equation}
where the third equality holds due to \cref{index_S,index_V} and the last equality comes from the commutativity of the tensor-tensor
product of tubal scalars.

Now we are ready to prove that \(\mathcal{A}=\mathcal{S} *_1 \mathcal{U}_{1}*_2 \cdots *_N \mathcal{U}_{N}\)
 if and only if \(\mathcal{A}_{(1)}=\mathcal{W}=\mathcal{U}_{1}*\mathcal{S}_{(1)} * \mathcal{V}\).

 If \(\mathcal{A}=\mathcal{S} *_1 \mathcal{U}_{1}*_2 \cdots *_N \mathcal{U}_{N}\), this means that, entrywisely,
 \[\mathcal{A}(i_1,\ldots,i_N)=\sum_{i'_1,\ldots,i'_N} \mathcal{S}(i'_1,\ldots,i'_N) * \mathcal{U}_{1}(i_1,i'_1) * \cdots *\mathcal{U}_{N}(i_N,i'_N);\]
 then, according to \cref{index_W},
 \[\mathcal{W}(i_1,j)=\mathcal{A}(i_1,\ldots,i_N)=\mathcal{A}_{(1)}(i_1,j),\]
where \(j\) is as defined in \cref{index_j}. Therefore we have \(\mathcal{A}_{(1)}=\mathcal{W}\).

Conversely, if \(\mathcal{W}=\mathcal{A}_{(1)}\), then
 \[
   \begin{aligned}
    &\quad\mathcal{A}(i_1,\ldots,i_N)\\
    =&\quad \mathcal{A}_{(1)}(i_1,j)\\
    =&\quad \mathcal{W}(i_1,j)\\
    =&\sum_{i'_1,\ldots,i'_N}\mathcal{S}(i'_1,\ldots,i'_N)*\mathcal{U}_{1}(i_1,i'_1)* \cdots * \mathcal{U}_{N}(i_N,i'_N),
   \end{aligned}
 \]
where \(j\) is as defined in \cref{index_j} and the last equality is due to \cref{index_W}.
Therefore we have \(\mathcal{A} = \mathcal{S}*_1 \mathcal{U}_{1} *_2 \mathcal{U}_{2} *_3 \cdots *_N \mathcal{U}_{N}\).
\end{proof}

\section{Higher-order t-SVD of tubal tensors}
\label{sec:main}

With these preparations, we are ready to establish the Hot-SVD of tubal tensors. 
To facilitate the comparison with   HOSVD of (usual) tensors, we quote the following theorem.

\begin{theorem}[HOSVD of tensors, {\cite[Theorem 2]{DeLDeMV}}]
  Every tensor \(\mathcal{A} \in \mathbb{C}^{I_1 \times \cdots \times I_N}\) can be written as the product
  \[\mathcal{A} = \mathcal{S}\times_1 U_{1} \cdots \times_N U_{N},\]
  where

  {\rm 1}. \(U_{n}\) is a unitary matrix for \(n=1,\ldots,N\),

  {\rm 2}. \(\mathcal{S} \in \mathbb{C}^{I_1 \times \cdots \times I_N}\) has the following properties:

  {\rm (i)} all-orthogonality: 
  
  \[\Braket{\mathcal{S}_{i_n=\alpha},\mathcal{S}_{i_n=\beta}} = 0\] for 
  \(1 \le \alpha \neq \beta \le I_n\), where \(\mathcal{S}_{i_n=\alpha}\) is the \((N-1)\)-th order   tensor obatained by fixing the \(n\)-th 
  index of \(\mathcal{S}\) to be \(\alpha\).

  {\rm (ii)} ordering: 
  
  \[\|\mathcal{S}_{i_n=1}\| \ge \|\mathcal{S}_{i_n=2}\| \ge \cdots \ge \|\mathcal{S}_{i_n=I_n}\|\] for all possible
  values of \(n\), where \(\|\mathcal{S}_{i_n=\alpha}\|\) is the same as the Frobenius norm of the \(\alpha\)-th row vector of the mode-\(n\)
  unfolding matrix
  \(\mathcal{S}_{(n)}\).

\end{theorem}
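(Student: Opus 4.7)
The plan is the classical construction of De Lathauwer--De Moor--Vandewalle: derive each factor matrix from a matrix SVD of a mode-$n$ unfolding and then define the core by pulling the factors back. Concretely, for each $n = 1, \ldots, N$, I would compute a full matrix SVD of the mode-$n$ unfolding, $\mathcal{A}_{(n)} = U_n \Sigma_n V_n^H$, with $U_n \in \mathbb{C}^{I_n \times I_n}$ unitary and the singular values $\sigma_1^{(n)} \ge \sigma_2^{(n)} \ge \cdots \ge 0$ on the diagonal of $\Sigma_n$. Then define
\[
\mathcal{S} := \mathcal{A} \times_1 U_1^H \times_2 U_2^H \times_3 \cdots \times_N U_N^H.
\]

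The second step is to check that $\mathcal{A} = \mathcal{S} \times_1 U_1 \times_2 \cdots \times_N U_N$. This is pure bookkeeping: the $n$-mode product obeys $(\mathcal{X} \times_n A)\times_n B = \mathcal{X} \times_n (BA)$ and commutes across distinct modes, so the relation $U_n U_n^H = I$ immediately recovers $\mathcal{A}$. Together with the unitarity of each $U_n$, this establishes item 1 and the decomposition itself.

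The third step is to derive the all-orthogonality and ordering properties. I would invoke the classical matricization identity
\[
\mathcal{S}_{(n)} = U_n^H \, \mathcal{A}_{(n)} \, \bigl(U_N \otimes \cdots \otimes U_{n+1} \otimes U_{n-1} \otimes \cdots \otimes U_1\bigr)^{*},
\]
where $\otimes$ is the usual Kronecker product and ${}^{*}$ denotes the (conjugate of the) matrix inside. Substituting $\mathcal{A}_{(n)} = U_n \Sigma_n V_n^H$ collapses the left factor to $\Sigma_n$, yielding $\mathcal{S}_{(n)} = \Sigma_n W_n$ for a matrix $W_n$ with orthonormal rows (since the Kronecker product of unitaries is unitary, and $V_n$ is unitary). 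Because the $\alpha$-th row of $\mathcal{S}_{(n)}$ is precisely the vectorization of the slice $\mathcal{S}_{i_n = \alpha}$, the inner product $\braket{\mathcal{S}_{i_n = \alpha}, \mathcal{S}_{i_n = \beta}}$ equals the $(\alpha,\beta)$ entry of $\Sigma_n W_n W_n^H \Sigma_n^H$, which is diagonal — giving all-orthogonality — and $\|\mathcal{S}_{i_n = \alpha}\| = \sigma_\alpha^{(n)}$, yielding the ordering.

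The main obstacle is the precise form of the matricization identity used in the third step: the ordering of factors inside the Kronecker product and whether one obtains a transpose, a conjugate, or a conjugate transpose depends sensitively on the unfolding convention of \cref{def:unfold}. Once this identity is pinned down consistently with that convention, the rest of the argument is a direct consequence of standard properties of the matrix SVD, and no new ideas beyond what is already available for matrices are required.
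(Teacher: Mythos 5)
Your proposal is correct and is the standard De Lathauwer--De Moor--Vandewalle argument; note that the paper itself does not prove this statement but quotes it from the cited reference, and your construction (SVD of each unfolding, core defined by pulling back the factors, matricization identity to get $\mathcal{S}_{(n)} = \Sigma_n W_n$ with $W_n$ having orthonormal rows) mirrors exactly the strategy the paper uses to prove its tubal analogue, \cref{thm:Hot-SVD}. No gaps beyond the convention-dependent form of the Kronecker identity, which you correctly flag and which resolves as you describe.
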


Now we state and prove the tubal version of the above theorem, based on tensor-tensor product.

\begin{theorem}[Hot-SVD of tubal tensors]\label{thm:Hot-SVD}
  Every tubal tensor \(\mathcal{A} \in \mathbb{C}_p^{I_1 \times \cdots \times I_N}\) can be written as the product
  \[ \mathcal{A} = \mathcal{S}*_1 \mathcal{U}_{1} \cdots *_N \mathcal{U}_{N},\]
  where 

  {\rm 1}. \(\mathcal{U}_{n} \in \mathbb{C}_p^{I_n \times I_n}\) is a unitary tubal matrix for \(n=1,\ldots,N\),

  {\rm 2}. \(\mathcal{S} \in \mathbb{C}_p^{I_1 \times \cdots \times I_N}\) has the following properties:

    {\rm (i)} all-orthogonality: for all \(1 \le n \le N\) and \(1 \le \alpha \neq \beta \le I_n\),
    
    \[\sum \mathcal{S}(i_1,\ldots,i_{n-1},\alpha,i_{n+1},\ldots,i_N)^H*\mathcal{S}(i_1,\ldots,i_{n-1},\beta,i_{n+1},\ldots,i_N) = \mathbf{0},\]
    where the sum is taken over \(i_1,\ldots,i_{n-1},i_{n+1},\ldots,i_{N}\) and \(\mathbf{0}=(0,\ldots,0) \in \mathbb{C}_p\). 

    {\rm (ii)} ordering: for \(L=cW\) where \(W\) is a unitary transformation and \(c \in \mathbb{C}\) is a non-zero scalar, we have
    further    
    \[\|\mathcal{S}_{i_n=1}\| \ge \|\mathcal{S}_{i_n=2}\| \ge \cdots \ge \|\mathcal{S}_{i_n=I_n}\|\] for all possible
    values of \(n\), where \(\|\mathcal{S}_{i_n=\alpha}\|\) is the same as the Frobenius norm of the \(\alpha\)-th row tubal
    vector of the mode-\(n\)
    unfolding tubal matrix
    \(\mathcal{S}_{(n)}\).
\end{theorem}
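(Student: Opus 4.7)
The plan is to mimic the classical HOSVD existence proof, lifted to the tubal setting. One obtains the unitary tubal factors from the t-SVDs of the mode-$n$ unfoldings of $\mathcal{A}$, and then defines the core tubal tensor by contracting $\mathcal{A}$ against the Hermitian transposes of these factors. The key pieces of machinery from Section~3 (the small-$t$ transpose, the tubal Kronecker product together with \cref{prop:kro}, and especially the tubal matrix representation \cref{prop:matrep}) are exactly what allow matrix-SVD-style arguments to go through on the tubal level.

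Concretely, I would proceed as follows. For each $n\in\{1,\ldots,N\}$, take a t-SVD of the mode-$n$ unfolding
\[
\mathcal{A}_{(n)}=\mathcal{U}_{n}*\Sigma_{n}*\mathcal{V}_{n}^{H},
\]
and adopt $\mathcal{U}_{n}$ as the $n$-th unitary factor. Define the core by
\[
\mathcal{S}:=\mathcal{A}*_{1}\mathcal{U}_{1}^{H}*_{2}\cdots*_{N}\mathcal{U}_{N}^{H}.
\]
Applying property (ii) of the preceding proposition on $n$-mode products together with the identity $\mathcal{U}_{n}*\mathcal{U}_{n}^{H}=\mathcal{I}$, one recovers $\mathcal{A}=\mathcal{S}*_{1}\mathcal{U}_{1}*_{2}\cdots*_{N}\mathcal{U}_{N}$ by re-applying each $\mathcal{U}_{n}$ in turn.

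For all-orthogonality, the key observation is that the stated sum, being a sum of tubal scalar products (commutative under $*$), equals the $(\beta,\alpha)$-entry of $\mathcal{S}_{(n)}*\mathcal{S}_{(n)}^{H}$, so the condition reduces to the claim that $\mathcal{S}_{(n)}*\mathcal{S}_{(n)}^{H}$ is f-diagonal. Applying \cref{prop:matrep} to the definition of $\mathcal{S}$ yields
\[
\mathcal{S}_{(n)}=\mathcal{U}_{n}^{H}*\mathcal{A}_{(n)}*\mathcal{K},\qquad\mathcal{K}:=\bigl(\mathcal{U}_{N}^{H}\ot\cdots\ot\mathcal{U}_{n+1}^{H}\ot\mathcal{U}_{n-1}^{H}\ot\cdots\ot\mathcal{U}_{1}^{H}\bigr)^{t}.
\]
Since each $\mathcal{U}_{k}^{H}$ is unitary, \cref{prop:kro}(v) makes the Kronecker product unitary, and \cref{prop:tunitary} shows that its small-$t$ transpose $\mathcal{K}$ is unitary as well; combined with \cref{prop:smallttrans} to manage $\mathcal{K}^{H}$ cleanly, this gives
\[
\mathcal{S}_{(n)}*\mathcal{S}_{(n)}^{H}=\mathcal{U}_{n}^{H}*\mathcal{A}_{(n)}*\mathcal{A}_{(n)}^{H}*\mathcal{U}_{n}=\Sigma_{n}*\Sigma_{n}^{H},
\]
which is f-diagonal, as required. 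For ordering, note that $\|\mathcal{S}_{i_{n}=\alpha}\|$ equals the Frobenius norm of the $\alpha$-th row of $\mathcal{S}_{(n)}$, which by the identity above equals the Frobenius norm of the $\alpha$-th diagonal tubal scalar of $\Sigma_{n}$. Under the hypothesis $L=cW$ with $W$ unitary, a Parseval-type identity relates the Frobenius norm of a tubal scalar to the sum of squared moduli of its transform-domain components, so this norm equals $\frac{1}{|c|}\bigl(\sum_{i}\bigl|\widehat{\sigma}_{\alpha}^{(i)}\bigr|^{2}\bigr)^{1/2}$, where $\widehat{\sigma}_{\alpha}^{(i)}$ is the $\alpha$-th singular value of the $i$-th frontal slice of $L(\mathcal{A}_{(n)})$. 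The matrix-SVD ordering $\widehat{\sigma}_{1}^{(i)}\ge\widehat{\sigma}_{2}^{(i)}\ge\cdots$ used in \cref{thm:EY} then gives the required monotonicity.

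The main obstacle I expect is purely bookkeeping: one must verify carefully that \cref{prop:matrep} produces $\mathcal{S}_{(n)}$ in the advertised form, and that the composite factor $\mathcal{K}$ retains unitarity after the Hermitian adjoint is intertwined with the Kronecker product and the small-$t$ transpose. The toolkit of \cref{prop:kro}(i)--(iii) and (v), \cref{prop:smallttrans}, and \cref{prop:tunitary} was set up precisely for this reconciliation. A second mild subtlety is that the ordering claim cannot be dispensed with the assumption $L=cW$: without a unitary transform there is no clean comparison between the Frobenius norms on the tubal side and the singular values surfacing in the transform-domain SVDs.
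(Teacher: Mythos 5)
Your proposal is correct and follows essentially the same route as the paper: extract each $\mathcal{U}_{n}$ from a t-SVD of $\mathcal{A}_{(n)}$, set $\mathcal{S}=\mathcal{A}*_{1}\mathcal{U}_{1}^{H}\cdots*_{N}\mathcal{U}_{N}^{H}$, and use \cref{prop:matrep} together with \cref{prop:kro}, \cref{prop:smallttrans}, and \cref{prop:tunitary} to control the Kronecker factor and deduce all-orthogonality and ordering from $\Sigma_{n}$. The only (harmless) variations are that you recover $\mathcal{A}$ via the $n$-mode composition rule rather than the chain of unfolding identities, and you phrase all-orthogonality as the f-diagonality of $\mathcal{S}_{(n)}*\mathcal{S}_{(n)}^{H}=\Sigma_{n}*\Sigma_{n}^{H}$, whereas the paper writes $\mathcal{S}_{(n)}=\Sigma_{n}*\mathcal{W}$ with $\mathcal{W}$ unitary and reads off the orthogonality of the rows.
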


Decomposition of the form in \cref{thm:Hot-SVD} is called higher-order t-SVD (Hot-SVD) in this work.

\begin{proof}
  According to   \cref{prop:matrep},
  we only need to prove that the equivalent tubal matrix representation
  \[\mathcal{A}_{(n)} = \mathcal{U}_{n}*\mathcal{S}_{(n)}*\left(\mathcal{U}_{N} \ot \cdots \ot \mathcal{U}_{n+1} \ot \mathcal{U}_{n-1} \ot \cdots \ot \mathcal{U}_{1}\right)^t\]
  holds for all \(n\).

  For \(n=1,\ldots,N\), let \( \mathcal{A}_{(n)} = \mathcal{U}_{n} * \Sigma_{n} * \mathcal{V}_{n}^{H}\) be a t-SVD of the tubal matrix 
  \(\mathcal{A}_{(n)}\). 
  Define 
  \[\mathcal{S} = \mathcal{A} *_1 \mathcal{U}_{1}^H 
  \cdots *_N \mathcal{U}_{N}^H.\]
  
  Then by \cref{prop:matrep},

  \[\mathcal{S}_{(n)} = \mathcal{U}_{n}^H * \mathcal{A}_{(n)}* \left(\mathcal{U}_{N}^{H} \ot \cdots \ot \mathcal{U}_{n+1}^{H} 
  \ot \mathcal{U}_{n-1}^{H} \cdots \ot \mathcal{U}_{1}^{H}\right)^t .\]
    Multiplying both sides by \(\mathcal{U}_{n}\) and \(\left(\mathcal{U}_{N} \ot \cdots \ot \mathcal{U}_{n+1} \ot \mathcal{U}_{n-1} \ot \cdots \ot \mathcal{U}_{1}\right)^t\),
  we have

\[ 
  \begin{aligned}
    &\mathcal{U}_{n}*\mathcal{S}_{(n)}*\left(\mathcal{U}_{N} \ot \cdots \ot \mathcal{U}_{n+1} \ot \mathcal{U}_{n-1} \ot \cdots \ot \mathcal{U}_{1}\right)^t\\
    =&\mathcal{U}_{n}*\mathcal{U}_{n}^{H} * \mathcal{A}_{(n)}* \left(\mathcal{U}_{N}^{H} \ot \cdots \ot \mathcal{U}_{n+1}^{H} \ot \mathcal{U}_{n-1}^{H} \ot \cdots \ot \mathcal{U}_{1}^{H}\right)^t\\
    & * \left(\mathcal{U}_{N} \ot \cdots \ot \mathcal{U}_{n+1} \ot \mathcal{U}_{n-1} \ot \cdots \ot \mathcal{U}_{1}\right)^t\\
    \overset{(\ref{prop:kro}{,~{\rm i}})}{=}& \mathcal{I}_{I_n}*\mathcal{A}_{(n)}*\left(\mathcal{U}_{N}^{Ht} 
    \ot \cdots \ot \mathcal{U}_{n+1}^{Ht} \ot \mathcal{U}_{n-1}^{Ht} \ot \cdots \ot \mathcal{U}_{1}^{Ht}\right)\\
    & * \left(\mathcal{U}_{N}^{t} \ot \cdots \ot \mathcal{U}_{n+1}^{t} \ot \mathcal{U}_{n-1}^{t} \ot \cdots \ot \mathcal{U}_{1}^{t}\right)\\
    \overset{(\ref{prop:kro}{,~{\rm iii}})}{=}& \mathcal{I}_{I_n}*\mathcal{A}_{(n)}* \left(\left(\mathcal{U}_{N}^{Ht}*\mathcal{U}_{N}^{t}\right) 
    \ot \cdots \ot \left(\mathcal{U}_{n+1}^{Ht}*\mathcal{U}_{n+1}^{t}\right) \right.\\
    & \left. \ot \left(\mathcal{U}_{n-1}^{Ht}*\mathcal{U}_{n-1}^{t}\right) \ot \cdots \ot \left(\mathcal{U}_{1}^{Ht}*\mathcal{U}_{1}^{t}\right)\right)\\
    \overset{(\ref{prop:smallttrans})}{=}& \mathcal{I}_{I_n}*\mathcal{A}_{(n)}*\left(\left(\mathcal{U}_{N}*\mathcal{U}_{N}^{H}\right)^t 
    \ot \cdots \ot \left(\mathcal{U}_{n+1}*\mathcal{U}_{n+1}^{H}\right)^t\right.\\
    & \ot\left. \left(\mathcal{U}_{n-1}*\mathcal{U}_{n-1}^{H}\right)^t \ot \cdots \ot \left(\mathcal{U}_{1}*\mathcal{U}_{1}^{H}\right)^t\right)\\
    =& \mathcal{I}_{I_n}* \mathcal{A}_{(n)}* \left(\mathcal{I}_{I_N}\ot \cdots \ot \mathcal{I}_{I_{n+1}} \ot \mathcal{I}_{I_{n-1}} \ot \cdots \ot \mathcal{I}_{I_1}\right)\\
    =&\mathcal{A}_{(n)},
  \end{aligned}  
\]
where the tubal matrices \(\mathcal{U}_{n}\)'s are unitary since they come from t-SVD.

  From
  \[\mathcal{A}_{(n)} = \mathcal{U}_{n} * \Sigma_{n} *
  \mathcal{V}_{n}^{H}\] 
  and
  \[\mathcal{A}_{(n)}=\mathcal{U}_{n}*\mathcal{S}_{(n)}*\left(\mathcal{U}_{N} \ot \cdots \ot \mathcal{U}_{n+1} \ot \mathcal{U}_{n-1} \ot \cdots \ot \mathcal{U}_{1}\right)^t,\]
  we obatain
  \[\mathcal{U}_{n}*\mathcal{S}_{(n)}*\left(\mathcal{U}_{N} \ot \cdots \ot \mathcal{U}_{n+1} \ot \mathcal{U}_{n-1} \ot \cdots \ot \mathcal{U}_{1}\right)^t
  =\mathcal{U}_{n} * \Sigma_{n} * \mathcal{V}_{n}^{H}.\]
  Multiplying both sides by \(\mathcal{U}_{n}^{H}\) and 
  \(\left(\mathcal{U}_{N} \ot \cdots \ot \mathcal{U}_{n+1} \ot \mathcal{U}_{n-1} \ot \cdots \ot \mathcal{U}_{1}\right)^{tH}\),
  we have
  \[
  \begin{aligned}
    \mathcal{S}_{(n)} 
    =& \mathcal{U}_{n}^{H}*\mathcal{U}_{n}* \Sigma_{n} * \mathcal{V}_{n}^{H} 
    *\left(\mathcal{U}_{N} \ot \cdots \ot \mathcal{U}_{n+1} \ot \mathcal{U}_{n-1} \ot \cdots \ot \mathcal{U}_{1}\right)^{tH}\\
  =&\Sigma_{n} * \mathcal{V}_{n}^{H} *(\mathcal{U}_{N} \ot \cdots \ot 
  \mathcal{U}_{n+1} \ot \mathcal{U}_{n-1} \ot \cdots \ot \mathcal{U}_{1})^{tH}\\
  \overset{(\ref{prop:kro}{,{\rm i}})}{=}& \Sigma_{n} * \mathcal{V}_{n}^{H} *\left(\mathcal{U}^{(N)t} \ot \cdots \ot 
  \mathcal{U}^{(n+1)t} \ot \mathcal{U}^{(n-1)t} \ot \cdots \ot \mathcal{U}^{(1)t}\right)^H.
  \end{aligned}  
  \]
  Since the tubal matrices \(\mathcal{U}_{i}^{t}, i=1,\ldots,N\) are unitary (by \cref{prop:tunitary}), the tubal matrix 
  \[\left(\mathcal{U}_{N}^{t} \ot \cdots \ot 
  \mathcal{U}_{n+1}^{t} \ot \mathcal{U}_{n-1}^{t} \ot \cdots \ot \mathcal{U}_{1}^{t}\right)^H\] is also unitary by 
  \cref{prop:kro}{,~{\rm (v)}}. 
  Then the tubal matrix \(\mathcal{W}=\mathcal{V}_{n}^{H} * \left(\mathcal{U}_{N}^{t} \ot \cdots \ot 
  \mathcal{U}_{n+1}^{t} \ot \mathcal{U}_{n-1}^{t} \ot \cdots \ot \mathcal{U}_{1}^{t}\right)^H\) is unitary. 
  As \(\Sigma_{n}\)
  is an f-diagonal tubal matrix, we conclude that the row tubal vectors of \(\mathcal{S}_{(n)}\),
  being tubal scalar multiples of the row tubal vectors of \(\mathcal{W}\), are orthogonal to each other,
  whence the all orthogonality of \(\mathcal{S}\).

  For the ordering property, observe first that
  \(\left\|\mathcal{S}_{i_n=\alpha}\right\| = \left\|\Sigma_{n}(\alpha,\alpha)\right\|\), which holds when \(L=cW\) (see \cite[Theorem 3.1]{kilmer2021tensor}).
  From the construction of t-SVD, each frontal slice of \(L(\Sigma_{n})\) has nonincreasing
  singular values, implying that the Frobenius norms \(\|L(\Sigma_{n})(\alpha,\alpha)\|\) are nonincreasing and consequently 
  the Frobenius norms \[\left\|\Sigma_{n}(\alpha,\alpha)\right\|=\left\|L^{-1}\left(L(\Sigma_{n})(\alpha,\alpha)\right)\right\|\]
  are nonincreasing 
  \[\left\|\Sigma_{n}(1,1)\right\| \ge \left\|\Sigma_{n}(2,2)\right\| \ge \cdots \ge \left\|\Sigma_{n}(I_N,I_N)\right\|,\]
  since \(\left\|L^{-1}(\mathbf{a})\right\| = \left\|c^{-1}W^H(\mathbf{a})\right\|=\left\|c^{-1}\right\|\cdot\left\|W^H(\mathbf{a})\right\|
  =\left\|c^{-1}\right\|\cdot\left\|\mathbf{a}\right\|\) for any tubal scalar \(\mathbf{a} \in \mathbb{C}_p\).
  Then 
  \[\left\|\mathcal{S}_{i_n=1}\right\| \ge \left\|\mathcal{S}_{i_n=2}\right\| \ge \cdots \ge \left\|\mathcal{S}_{i_n=I_n}\right\|,\]
  since \(\left\|\mathcal{S}_{i_n=\alpha}\right\| = \left\|\Sigma_{n}(\alpha,\alpha)\right\|\) as noted above.
\end{proof}

\begin{remark}
The above proof tells us that \cref{prop:matrep} plays a key role to show the validness of Hot-SVD, and  we emphasize that the link in \cref{prop:matrep} between  the Hot-SVD of a tubal tensor \(\mathcal{A}\) and the t-SVD 
  of the unfoldings 
  \(\mathcal{A}_{(n)}\) of \(\mathcal{A}\) does not hold if we replace the small-t transpose with the usual capital-T transpose. 
\end{remark}

The above proof actually indicates how   Hot-SVD of a given tubal tensor can be computed: the tubal matrix \(\mathcal{U}_{n}\) can be directly found through
the t-SVD of the unfolding tubal matrix \(A_{(n)}\), and the core tubal tensor \(\mathcal{S}\) can be computed by the \(n\)-mode product. 
When \(L\) is the DFT, the computational
complexity of this procedure  is of the same order as that of t-SVD for higher-order tensors defined in \cite{martin2013order}.

Many properties of HOSVD have clear counterparts in our model based on the tensor-tensor product.

\begin{property}[generalization]
  The Hot-SVD of a tubal matrix boils down to the t-SVD.
\end{property}

This is obvious from \cref{prop:matrep}.

\begin{property}[\(n\)-rank]
  The mode-\(n\) tubal  rank of \(\mathcal{A}\) is the same as the highest index \(r_n\) for which \(\|\mathcal{S}_{i_n=r_n}\|>0\).
\end{property}

\begin{proof}
 We use the notations from the proof of \cref{thm:Hot-SVD}.

 By definition, the mode-\(n\) tubal rank of \(\mathcal{A}\) is number of non-zero tubal scalars on the diagonal of \(\Sigma_{n}\).
 From the proof of \cref{thm:Hot-SVD}, we have
 \[\mathcal{S}_{(n)}=\Sigma_{n} * \mathcal{V}_{n}^{H} *\left(\mathcal{U}_{N}^{t} \ot \cdots \ot 
 \mathcal{U}_{n+1}^{t} \ot \mathcal{U}_{n-1}^{t} \ot \cdots \ot \mathcal{U}_{1}^{t}\right)^H,\]
 where \(\mathcal{V}_{n}^{H}\) and \(\left(\mathcal{U}_{N}^{t} \ot \cdots \ot 
 \mathcal{U}_{n+1}^{t} \ot \mathcal{U}_{n-1}^{t} \ot \cdots \ot \mathcal{U}_{1}^{t}\right)^H\) are unitary tubal matrices.
 Thus the number of non-zero row vectors of the tubal matrix \(\mathcal{S}_{(n)}\) (which is the same as the highest index
 \(r_n\) for which \(\left\|\mathcal{S}_{i_n=r_n}\right\|>0\)) is the same as the number of non-zero tubal scalars on the diagonal
 of \(\Sigma_{n}\), i.e., the mode-\(n\) tubal rank of \(\mathcal{A}\).
\end{proof}

\begin{property}[link between Hot-SVD and t-SVD]\label{prop:link}
  The Hot-SVD gives a thin t-SVD of \(\mathcal{A}_{(n)}\) by normalizing \(\mathcal{S}_{(n)}\) to extract a diagonal tubal matrix.
\end{property}

\begin{proof}
  If \(\mathcal{A} =\mathcal{S}*_1 \mathcal{U}_{1}\cdots*_N \mathcal{U}_{N}\), then
\[\mathcal{A}_{(n)}=\mathcal{U}_{n}*\mathcal{S}_{(n)}*\left(\mathcal{U}_{N}\ot\cdots\ot \mathcal{U}_{n+1}\ot \mathcal{U}_{n-1}\ot\cdots\ot \mathcal{U}_{1}\right)^t,\]
where \(\mathcal{S}_{(n)} \) has mutually orthogonal tubal rows, with Frobenius-norms \(\sigma_1^{(n)},\ldots,\sigma_{I_n}^{(n)}\). 

Define 
\[\Sigma_{n}=\diag(\sigma_1^{(n)}\mathbf{1},\sigma_2^{(n)}\mathbf{1},\ldots,\sigma_{(I_n)}^{(n)}\mathbf{1}),\]
where \(\mathbf{1}=L^{-1}\left((1,1,\ldots,1)\right)\in \mathbb{C}_p\) is the identity tubal scalar in \(\mathbb{C}_p\).

Let \(\widetilde{\mathcal{S}}_{n}\) be the normalized version of \(\mathcal{S}_{(n)}\), i.e., \(\mathcal{S}_{(n)}=\Sigma_{n}*\widetilde{\mathcal{S}}_{(n)}\).
Define 
\[\mathcal{V}_{n}^H=\widetilde{\mathcal{S}}_{n}*\left(\mathcal{U}_{N}\ot\cdots\ot \mathcal{U}_{n+1}\ot \mathcal{U}_{n-1}\ot\cdots\ot \mathcal{U}_{1}\right)^t,\]
which is a tubal matrix with orthogonal tubal rows.
Then \(\mathcal{A}_{(n)}=\mathcal{U}_{n}*\Sigma_{n}*\mathcal{V}_{n}^H\) is a thin t-SVD of \(\mathcal{A}_{(n)}\).
\end{proof}

\begin{property}[structure]
  Keep the notation in the proof of \cref{prop:link}. That is, 
  \(\mathcal{A}_{(n)}=\mathcal{U}_{n}*\Sigma_{n}*\mathcal{V}_{n}^H\) is the t-SVD of \(\mathcal{A}_{(n)}\) obatained from the Hot-SVD of \(\mathcal{A}\).
  Denote the tubal scalars on the diagonal of \(\Sigma_{n}\) by \(\mathbf{s}_i\). Suppose that the conditions in \cref{thm:RN} hold. Then

  {\rm (i)} The range of \(\mathcal{A}_{(n)}\) is

  \[\left\{\mathcal{U}_{n}(1,:)*\mathbf{c}_1+\cdots+\mathcal{U}_{n}(j+k,:)*\mathbf{c}_{j+k} \, | \, 
  \mathbf{c}_i=\mathbf{s}_i*\mathbf{d}_i, \mathbf{d}_i \in \mathbb{C}_p, j+1 \le i \le j+k\right\}.\]

  {\rm (ii)} The kernel of \(\mathcal{A}_{(n)}\) is

  \[\left\{\mathcal{V}_{n}(j+1,:)*\mathbf{c}_{j+1}+\cdots+\mathcal{V}_{n}(m,:)*\mathbf{c}_m \, | \, 
  \mathbf{s}_i*\mathbf{c}_i=\mathbf{0}, \mathbf{c}_i \in \mathbb{C}_p, j+1 \le i \le j+k\right\}.\]
\end{property}

This is exactly \cref{thm:RN} as applied to the tubal matrix \(\mathcal{A}_{(n)}\).

\begin{property}[norm] When \(L=cW\), where \(W\) is a unitary transformation and \(c\in \mathbb{C}\) is non-zero scalar, we have
  \[\|\mathcal{A}\| = \|\mathcal{S}\|.\]
\end{property}

This follows from the unitary invariance of the Frobenius norm of tubal matrices \cite[Theorem 3.1]{kilmer2021tensor}.

\section{ Truncated Hot-SVD and Sequentially Truncated Hot-SVD}
\label{sec:prop}

This section derives the truncated Hot-SVD and sequentially truncated Hot-SVD, which generalize those of \cite{de2000a,vannieuwenhoven2012new} to the tubal setting. The algorithms are depicted in Algorithms \ref{alg:tr_hotsvd} and \ref{alg:seq_tr_hotsvd}.

\begin{algorithm}[t]
  \caption{tr-Hot-SVD\((\mathcal{A},I_{1}^{\prime},\ldots,
  I_{N}^{\prime})\)}
  \label{alg:tr_hotsvd}
  \begin{algorithmic}
    
    \WHILE{\(1 \le n \le N\)}
    \STATE{\(\mathcal{U}_{n} \leftarrow I_{n^{\prime}}\) leading left singular tubal vectors of \(\mathcal{A}_{(n)}\)}
    \ENDWHILE
    \STATE{\(\mathcal{S} \leftarrow \mathcal{A}*_1 \mathcal{U}^{(1)H}*_2 \mathcal{U}^{(2)H} \cdots *_N \mathcal{U}^{(N)H}\) }

    \RETURN $\mathcal{S}, \mathcal{U}_{1},\mathcal{U}_{2},\ldots,\mathcal{U}_{N}$

  \end{algorithmic}
\end{algorithm}

\begin{algorithm}
  \caption{Seq-tr-Hot-SVD\((\mathcal{A},I'_1,I'_2,\ldots,I'_N)\)}
  \label{alg:seq_tr_hotsvd}
  \begin{algorithmic}
    \STATE{\(\widehat{\mathcal{S}} \leftarrow \mathcal{A}\)}
    \WHILE{\(1 \le n \le N\)}
    \STATE{\(\widehat{\mathcal{U}}_{n} \leftarrow\) the \(I'_n\) leading left singular tubal vectors of
    \(\widehat{\mathcal{S}}_{(n)}\)}
    \STATE{\(\widehat{\mathcal{S}} \leftarrow \mathcal{A}*_1 \widehat{\mathcal{U}}_{1}^{H} \cdots *_{n} \widehat{\mathcal{U}}_{n}^{H}\)}
    \ENDWHILE
    \RETURN $\widehat{\mathcal{S}}, \widehat{\mathcal{U}}_{1},\widehat{\mathcal{U}}_{2},\ldots,\widehat{\mathcal{U}}_{N}$
  \end{algorithmic}
\end{algorithm}

To prove an error bound for truncated Hot-SVD and sequentially truncated Hot-SVD, we first need some technical preparations.

\begin{proposition}\label{prop:orthoganalityinfrobeniusnorm}
  Suppose \(L=cW\), where \(c \in \mathbb{C}\) is a non-zero scalar and \(W\) is a unitary transformation.
  Let \(\mathcal{A}_{1},\cdots,\mathcal{A}_{N} \in \mathbb{C}_p^{I_1 \times I_2}\) be tubal matrices that are orthogonal 
  to each other with respect to the tensor-tensor product: for \(1\le m \neq n \le N\),
  \[\mathcal{A}_{m}^{H}*\mathcal{A}_{n}=\mathcal{O}.\]
  Then \(\mathcal{A}_{1},\cdots,\mathcal{A}_{N}\) are orthogonal in the Frobenius norm: for \(1\le m \neq n \le N\),
  \[\braket{\mathcal{A}_m,\mathcal{A}_n}=0.\]
  Consequently we have
  \[\|\mathcal{A}_{1}\|^2+\|\mathcal{A}_{2}\|^2+\cdots+\|\mathcal{A}_{N}\|^2
  =\|\mathcal{A}_{1}+\mathcal{A}_{2}+\cdots+\mathcal{A}_{N}\|^2.\]
\end{proposition}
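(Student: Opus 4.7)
\textbf{Proof plan for Proposition \ref{prop:orthoganalityinfrobeniusnorm}.}
The plan is to push the assumed tensor-tensor product orthogonality through the transform $L$ and work frontal-slice by frontal-slice in the $L$-domain, where orthogonality in the face-wise product sense becomes ordinary matrix orthogonality. The hypothesis $L = cW$ with $W$ unitary is crucial: it turns $L$ into a (scaled) isometry for the Frobenius norm, so inner products in the tubal world are, up to a global factor $|c|^{-2}$, sums of ordinary matrix Frobenius inner products of the frontal slices of $L(\mathcal{A}_m)$ and $L(\mathcal{A}_n)$ (a tubal-scalar Parseval identity).

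First I would unfold the definition: $\mathcal{A}_m^H * \mathcal{A}_n = \mathcal{O}$ is equivalent, by Definition \ref{def:ttproduct} together with Definition \ref{def:herm}, to
\[
\bigl(L(\mathcal{A}_m)^{(k)}\bigr)^H\, L(\mathcal{A}_n)^{(k)} = 0 \quad \text{for every } k = 1,\ldots,p.
\]
Taking traces gives $\langle L(\mathcal{A}_m)^{(k)}, L(\mathcal{A}_n)^{(k)}\rangle_F = 0$ for each slice $k$.

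Next I would translate this back. Because $L = cW$ with $W$ unitary, for any tubal scalar $\mathbf{a}\in\mathbb{C}_p$ we have $\|L(\mathbf{a})\|^2 = |c|^2\|\mathbf{a}\|^2$ and analogously $\langle L(\mathbf{a}),L(\mathbf{b})\rangle = |c|^2 \langle \mathbf{a},\mathbf{b}\rangle$. Summing entrywise over the $I_1\times I_2$ tubal matrix positions, this gives
\[
\langle \mathcal{A}_m, \mathcal{A}_n\rangle \;=\; |c|^{-2}\sum_{k=1}^{p} \bigl\langle L(\mathcal{A}_m)^{(k)},\, L(\mathcal{A}_n)^{(k)}\bigr\rangle_F \;=\; 0,
\]
so the $\mathcal{A}_m$'s are pairwise Frobenius-orthogonal. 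The Pythagorean identity then follows by expanding
\[
\Bigl\|\sum_{n=1}^{N}\mathcal{A}_n\Bigr\|^2 = \sum_{n=1}^{N}\|\mathcal{A}_n\|^2 + \sum_{m\neq n}\langle \mathcal{A}_m,\mathcal{A}_n\rangle = \sum_{n=1}^{N}\|\mathcal{A}_n\|^2.
\]

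There is no real obstacle here; the only thing to be careful about is keeping the scaling constant $|c|^{-2}$ honest when passing between the tubal Frobenius inner product and the slice-wise matrix inner products in the $L$-domain. Once the Parseval-type identity for $L = cW$ is set up, the equivalence between tensor-tensor orthogonality and Frobenius orthogonality reduces to the elementary fact that if a matrix product $M^H N$ vanishes then $\operatorname{tr}(M^H N) = 0$.
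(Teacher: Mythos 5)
Your proposal is correct and follows essentially the same route as the paper's own proof: pass to the $L$-domain where the hypothesis becomes slice-wise matrix orthogonality $\bigl(L(\mathcal{A}_m)^{(k)}\bigr)^H L(\mathcal{A}_n)^{(k)}=0$, take traces, and then use the Parseval-type identity $\langle L(\mathbf a),L(\mathbf b)\rangle=|c|^2\langle\mathbf a,\mathbf b\rangle$ for $L=cW$ to conclude $\langle\mathcal{A}_m,\mathcal{A}_n\rangle=0$, with the Pythagorean identity following by bilinearity. No gaps; your explicit tracking of the $|c|^{-2}$ factor matches the paper's computation.
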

\begin{proof}
  Since \(\mathcal{A}_{m}^{H}*\mathcal{A}_{n}=\mathcal{O}\), we have for \(k=1,\ldots,p,\)
  \[L(\mathcal{A}_m)^{(k)H} L(\mathcal{A}_n)^{(k)}=0,\]
where $0$ above denotes the zero matrix of the proper size.  This implies that 
  \begin{equation}\label{eq:Lortho}
    \braket{L(\mathcal{A}_m)^{(k)},L(\mathcal{A}_n)^{(k)}} = \mathrm{tr} \left(L(\mathcal{A}_m)^{(k)H} L(\mathcal{A}_n)^{(k)}\right)=0.
  \end{equation}
  Therefore the matrices \(L(\mathcal{A}_1)^{(k)},\ldots,L(\mathcal{A}_N)^{(k)}\) are orthogonal to each other in the Frobenius norm. 
  Then it follows that
  \begin{equation}
    \begin{aligned}
      &\braket{\mathcal{A}_m,\mathcal{A}_n}\\
      =&\sum_{i_1,i_2,k}\mathcal{A}_m(i_1,i_2)^{(k)}\mathcal{A}_n(i_1,i_2)^{(k)}\\
      =&\sum_{i_1,i_2}\left(\sum_{k=1}^{p}\mathcal{A}_m(i_1,i_2)^{(k)}\mathcal{A}_n(i_1,i_2)^{(k)}\right)\\
      =&\sum_{i_1,i_2}\left\langle\mathcal{A}_m(i_1,i_2),\mathcal{A}_n(i_1,i_2)\right\rangle\\
      =&\sum_{i_1,i_2}\frac{1}{|c|^2}\left\langle L\left(\mathcal{A}_m(i_1,i_2)\right),L\left(\mathcal{A}_n(i_1,i_2)\right)\right\rangle\\
      =&\sum_{i_1,i_2}\frac{1}{|c|^2}\left\langle L\left(\mathcal{A}_m\right)(i_1,i_2),L\left(\mathcal{A}_n\right)(i_1,i_2)\right\rangle\\
      =&\frac{1}{|c|^2}\sum_{i_1,i_2,k} \left(L\left(\mathcal{A}_m\right)(i_1,i_2)\right)^{(k)} 
      \left(L\left(\mathcal{A}_n\right)(i_1,i_2)\right)^{(k)}\\
      =&\frac{1}{|c|^2}\sum_{k=1}^p\left(\sum_{i_1,i_2}\left(L\left(\mathcal{A}_m\right)(i_1,i_2)\right)^{(k)} 
      \left(L\left(\mathcal{A}_n\right)(i_1,i_2)\right)^{(k)}\right)\\
      \overset{\ref{eq:Lortho}}{=}&\frac{1}{|c|^2}\sum_{k=1}^{p} \left\langle L(\mathcal{A}_m)^{(k)},L(\mathcal{A}_n)^{(k)} \right\rangle\\
      =&0.
    \end{aligned}
  \end{equation}
  Finally the equality concerning the Frobenius norms follows from the bilinearity of the inner product.
\end{proof}

\begin{proposition}\label{prop:normdecrease}
  Suppose \(L=cW\), where \(c \in \mathbb{C}\) is a non-zero scalar and \(W\) is a unitary transformation.
  Let \(\mathcal{A} \in \mathbb{C}_p^{I_1 \times \cdots \times I_N}\) be a tubal tensor and 
  \(\breve{\mathcal{U}}_{n}\in \mathbb{C}_p^{I_n \times R_n}\) (\(I_n \ge R_n\)) be a partially unitary 
  tubal matrix (i.e. \(\breve{\mathcal{U}}_{n}^{H}*\breve{\mathcal{U}}_{n}=\mathcal{I}\)).
  Then 
  \[\|\mathcal{A}*_n\left(\breve{\mathcal{U}}_{n}* \breve{\mathcal{U}}_{n}^{H}\right)\|^2+
    \|\mathcal{A}*_n\left(\mathcal{I}-\breve{\mathcal{U}}_{n}* \breve{\mathcal{U}}_{n}^{H}\right)\|^2
  = \|\mathcal{A}\|^2 .\]
  In particular, \(\|\mathcal{A}*_n\left(\breve{\mathcal{U}}^{(n)} *\breve{\mathcal{U}}_{n}^{H}\right)\| \le \|\mathcal{A}\|\).
\end{proposition}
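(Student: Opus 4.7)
The plan is to write $\mathcal{A}*_n \mathcal{I} = \mathcal{A}*_n\bigl(\breve{\mathcal{U}}_{n}*\breve{\mathcal{U}}_{n}^{H}\bigr) + \mathcal{A}*_n\bigl(\mathcal{I}-\breve{\mathcal{U}}_{n}*\breve{\mathcal{U}}_{n}^{H}\bigr)$ and establish a Pythagorean identity for the two summands. The key object is the tubal matrix $\mathcal{P} := \breve{\mathcal{U}}_{n}*\breve{\mathcal{U}}_{n}^{H} \in \mathbb{C}_p^{I_n\times I_n}$. Using the partial unitarity $\breve{\mathcal{U}}_{n}^{H}*\breve{\mathcal{U}}_{n}=\mathcal{I}$ together with the Hermitian transpose rule $(\mathcal{A}*\mathcal{B})^H=\mathcal{B}^H*\mathcal{A}^H$, I would first verify that $\mathcal{P}$ is a self-adjoint idempotent with respect to $*$, that is $\mathcal{P}^H=\mathcal{P}$ and $\mathcal{P}*\mathcal{P}=\mathcal{P}$. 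These in turn imply $\mathcal{P}^H*(\mathcal{I}-\mathcal{P})=\mathcal{O}$, the tubal-matrix analogue of the orthogonality of the range and kernel of a projector.

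Next I would unfold along mode $n$ to reduce everything to tubal matrices. By the observation immediately after the definition of the $n$-mode product, $\mathcal{A}*_n \mathcal{U} = \mathcal{B}$ is the same as $\mathcal{U}*\mathcal{A}_{(n)} = \mathcal{B}_{(n)}$, so the two summands have mode-$n$ unfoldings $\mathcal{P}*\mathcal{A}_{(n)}$ and $(\mathcal{I}-\mathcal{P})*\mathcal{A}_{(n)}$, respectively. Using the self-adjoint projection property derived above,
\begin{equation*}
(\mathcal{P}*\mathcal{A}_{(n)})^H * \bigl((\mathcal{I}-\mathcal{P})*\mathcal{A}_{(n)}\bigr) \;=\; \mathcal{A}_{(n)}^H * \mathcal{P}^H * (\mathcal{I}-\mathcal{P}) * \mathcal{A}_{(n)} \;=\; \mathcal{O},
\end{equation*}
so these two tubal matrices are orthogonal in the tensor-tensor-product sense.

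Then I would invoke Proposition \ref{prop:orthoganalityinfrobeniusnorm} (which requires the assumption $L=cW$) to conclude that the two pieces are also orthogonal in the Frobenius inner product, and hence
\begin{equation*}
\|\mathcal{P}*\mathcal{A}_{(n)}\|^2 + \|(\mathcal{I}-\mathcal{P})*\mathcal{A}_{(n)}\|^2 = \|\mathcal{A}_{(n)}\|^2.
\end{equation*}
Since mode-$n$ unfolding is a mere reindexing and preserves the Frobenius norm, this translates back to the claimed identity $\|\mathcal{A}*_n\mathcal{P}\|^2+\|\mathcal{A}*_n(\mathcal{I}-\mathcal{P})\|^2=\|\mathcal{A}\|^2$; the ``in particular'' statement is an immediate consequence since the discarded summand is nonnegative.

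The only mildly delicate step is verifying that $\mathcal{P}$ behaves like a genuine orthogonal projector in the non-commutative $*$-algebra of tubal matrices; once this algebraic fact is in hand, the proof is essentially the tubal analogue of the standard Hilbert-space Pythagorean identity, and Proposition \ref{prop:orthoganalityinfrobeniusnorm} does the final job of bridging tensor-tensor orthogonality with Frobenius-norm orthogonality.
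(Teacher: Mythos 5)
Your proposal is correct and follows essentially the same route as the paper: both verify that \(\breve{\mathcal{U}}_{n}*\breve{\mathcal{U}}_{n}^{H}\) is a self-adjoint idempotent so that \(\left(\breve{\mathcal{U}}_{n}*\breve{\mathcal{U}}_{n}^{H}\right)^{H}*\left(\mathcal{I}-\breve{\mathcal{U}}_{n}*\breve{\mathcal{U}}_{n}^{H}\right)=\mathcal{O}\), pass to the mode-\(n\) unfolding, and invoke Proposition \ref{prop:orthoganalityinfrobeniusnorm} to convert tensor-tensor-product orthogonality into Frobenius orthogonality. If anything, you are slightly more explicit than the paper in sandwiching the projector identity with \(\mathcal{A}_{(n)}\) to check the hypothesis of that proposition, which is a welcome clarification rather than a deviation.
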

\begin{proof}
  Note that \(\left(\breve{\mathcal{U}}_{n}* \breve{\mathcal{U}}_{n}^{H}\right)^{H}
  *\left(\mathcal{I}-\breve{\mathcal{U}}_{n}* \breve{\mathcal{U}}_{n}^{H}\right)
  =\breve{\mathcal{U}}_{n}* \breve{\mathcal{U}}_{n}^{H}-\breve{\mathcal{U}}_{n} *\breve{\mathcal{U}}_{n}^{H}*
  \breve{\mathcal{U}}_{n}* \breve{\mathcal{U}}_{n}^{H}
  =\mathcal{O},\) so \cref{prop:orthoganalityinfrobeniusnorm}
  implies that 
  \[\begin{aligned}
    &\|\mathcal{A}*_n\left(\breve{\mathcal{U}}_{n}* \breve{\mathcal{U}}_{n}^{H}\right)\|^2+
  \|\mathcal{A}*_n\left(\mathcal{I}-\breve{\mathcal{U}}_{n}* \breve{\mathcal{U}}_{n}^{H}\right)\|^2\\
  =&\|\left(\breve{\mathcal{U}}_{n}* \breve{\mathcal{U}}_{n}^{H}\right)*\mathcal{A}_{(n)}\|^2
  +\|\left(\mathcal{I}-\breve{\mathcal{U}}_{n}* \breve{\mathcal{U}}_{n}^{H}\right)*\mathcal{A}_{(n)}\|^2\\
  =&\|\mathcal{A}_{(n)}\|^2=\|\mathcal{A}\|^2.
  \end{aligned}\] 
\end{proof}

\begin{proposition}\label{prop:error}
  Suppose \(L=cW\), where \(c \in \mathbb{C}\) is a non-zero scalar and \(W\) is a unitary transformation.
  Let \(\mathcal{A} \in \mathbb{C}_p^{I_1 \times I_2 \times \cdots \times I_N}\) be a tubal tensor of order \(N\).
  Let \(\mathcal{A}\) be approximated by 
  \[\breve{\mathcal{A}} = \mathcal{A} *_1 \left(\breve{\mathcal{U}}_{1}* \breve{\mathcal{U}}_{1}^{H}\right)\cdots
  *_N\left( \breve{\mathcal{U}}_{N} *\breve{\mathcal{U}}_{N}^{H} \right),\]
  where \(\breve{\mathcal{U}}_{n} \in \mathbb{C}_p^{I_n \times R_n}\) (\(I_n \ge R_n\), \(n=1,\ldots,N\)) are partially unitary tubal matrices (i.e. 
  \(\breve{\mathcal{U}}_{n}^{H}*\breve{\mathcal{U}}_{n}=\mathcal{I}\)).
  Then the squared approximation error is 
  \[
  \begin{aligned}  
  &\quad\,\|\mathcal{A}-\breve{\mathcal{A}}\|^2\\
  =&\quad\,\|\mathcal{A}*_1\left(\mathcal{I}-\breve{\mathcal{U}}_{1}* \breve{\mathcal{U}}_{1}^{H}\right)\|^2\\
  &+\|\mathcal{A}*_1\left(\breve{\mathcal{U}}_{1} *\breve{\mathcal{U}}_{1}^{H}\right) 
  *_2 \left(\mathcal{I}-\breve{\mathcal{U}}_{2} *\breve{\mathcal{U}}_{2}^{H}\right)\|^2\\
  &+\|\mathcal{A}*_1\left(\breve{\mathcal{U}}_{1}* \breve{\mathcal{U}}_{1}^{H}\right)
  *_2\left(\breve{\mathcal{U}}_{2}* \breve{\mathcal{U}}_{2}^{H}\right)
  *_3\left(\mathcal{I} - \breve{\mathcal{U}}_{3} *\breve{\mathcal{U}}_{3}^{H}\right)\|^2\\
  &+\cdots\\
  &+\|\mathcal{A}*_1\left(\breve{\mathcal{U}}_{1}* \breve{\mathcal{U}}_{1}^{H}\right)
  \cdots
  *_{N-1}\left(\breve{\mathcal{U}}_{N-1}* \breve{\mathcal{U}}_{N-1}^{H}\right)
  *_N\left(\mathcal{I}- \breve{\mathcal{U}}_{N}* \breve{\mathcal{U}}_{N}^{H}\right)\|^2
  \end{aligned}
  \]
\end{proposition}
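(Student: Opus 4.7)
The plan is to write $\mathcal{A}-\breve{\mathcal{A}}$ as a telescoping sum of $N$ tubal tensors $T_1,\ldots,T_N$—exactly the summands appearing on the right-hand side—show they are pairwise orthogonal in the Frobenius inner product, and conclude by the Pythagorean identity contained in \cref{prop:orthoganalityinfrobeniusnorm}.

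Introduce the shorthand $\mathcal{P}_n := \breve{\mathcal{U}}_n * \breve{\mathcal{U}}_n^{H}$ and $\mathcal{Q}_n := \mathcal{I} - \mathcal{P}_n$, and set
\[T_n := \mathcal{A} *_1 \mathcal{P}_1 *_2 \cdots *_{n-1}\mathcal{P}_{n-1} *_n \mathcal{Q}_n,\qquad n=1,\ldots,N.\]
Since $\mathcal{Q}_n = \mathcal{I}-\mathcal{P}_n$, one has $T_n = \mathcal{A}*_1\mathcal{P}_1\cdots*_{n-1}\mathcal{P}_{n-1} - \mathcal{A}*_1\mathcal{P}_1\cdots*_n\mathcal{P}_n$, so the sum $\sum_{n=1}^{N} T_n$ telescopes to $\mathcal{A}-\breve{\mathcal{A}}$ (using the commutativity of $n$-mode products in distinct modes to interpret the intermediate terms).

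The main step is pairwise orthogonality $\langle T_n,T_m\rangle=0$ for $n<m$. Using commutativity of $n$-mode products in distinct modes, write $T_n = \mathcal{C}_n *_n \mathcal{Q}_n$ and $T_m = \widetilde{\mathcal{D}}_m *_n \mathcal{P}_n$, where $\mathcal{C}_n$ collects the factors of $T_n$ in modes $1,\ldots,n-1$ and $\widetilde{\mathcal{D}}_m$ collects all the factors of $T_m$ in modes $\neq n$. Unfolding in mode $n$, and using $(\mathcal{B}*_n \mathcal{U})_{(n)} = \mathcal{U}*\mathcal{B}_{(n)}$, yields
\[(T_n)_{(n)} = \mathcal{Q}_n * (\mathcal{C}_n)_{(n)},\qquad (T_m)_{(n)} = \mathcal{P}_n * (\widetilde{\mathcal{D}}_m)_{(n)}.\]
Since $\mathcal{P}_n$ is Hermitian and idempotent (the latter from $\breve{\mathcal{U}}_n^H *\breve{\mathcal{U}}_n = \mathcal{I}$), we have $\mathcal{Q}_n^H = \mathcal{Q}_n$ and $\mathcal{Q}_n *\mathcal{P}_n = \mathcal{P}_n - \mathcal{P}_n = \mathcal{O}$; therefore
\[\bigl((T_n)_{(n)}\bigr)^H * (T_m)_{(n)} \;=\; (\mathcal{C}_n)_{(n)}^H * (\mathcal{Q}_n*\mathcal{P}_n) * (\widetilde{\mathcal{D}}_m)_{(n)} \;=\; \mathcal{O}.\]
\cref{prop:orthoganalityinfrobeniusnorm} then gives $\langle T_n, T_m\rangle = \langle (T_n)_{(n)},(T_m)_{(n)}\rangle = 0$.

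Applying the Pythagorean identity from \cref{prop:orthoganalityinfrobeniusnorm} to the mutually orthogonal family $T_1,\ldots,T_N$ gives $\|\mathcal{A}-\breve{\mathcal{A}}\|^2 = \sum_{n=1}^{N}\|T_n\|^2$, which is the stated decomposition. The main obstacle I anticipate is the bookkeeping in the reorganization step: one must justify reshuffling the $*_k$ factors so that the mode-$n$ unfolding cleanly displays $\mathcal{Q}_n$ on one side and $\mathcal{P}_n$ on the other. Once that is in place, the whole argument collapses to the single identity $\mathcal{Q}_n*\mathcal{P}_n = \mathcal{O}$ together with the tubal-matrix Pythagorean identity of \cref{prop:orthoganalityinfrobeniusnorm}.
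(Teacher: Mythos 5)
Your proposal is correct and follows essentially the same route as the paper: the identical telescoping decomposition of $\mathcal{A}-\breve{\mathcal{A}}$, pairwise orthogonality of the summands established by unfolding both terms in the smaller mode $n$ so that the factors $\mathcal{I}-\breve{\mathcal{U}}_n*\breve{\mathcal{U}}_n^H$ and $\breve{\mathcal{U}}_n*\breve{\mathcal{U}}_n^H$ meet and annihilate, and then the Pythagorean identity of \cref{prop:orthoganalityinfrobeniusnorm}. The bookkeeping you flag is handled in the paper by writing the unfoldings explicitly via \cref{prop:matrep} as Kronecker products of the remaining mode factors, which is exactly the justification your sketch defers.
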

\begin{proof}
Note that
  \begin{equation}\label{eq:telescope}
    \begin{aligned}
      \mathcal{A}-\breve{\mathcal{A}}\,\,\,
      =&\quad \,\,\mathcal{A}
      -\mathcal{A}*_1\left(\breve{\mathcal{U}}_{1}* \breve{\mathcal{U}}_{1}^{H}\right)
      +\mathcal{A}*_1\left(\breve{\mathcal{U}}_{1}* \breve{\mathcal{U}}_{1}^{H}\right)\\
      &-\mathcal{A}*_1\left(\breve{\mathcal{U}}_{1}* \breve{\mathcal{U}}_{1}^{H}\right)
      *_2\left(\breve{\mathcal{U}}_{2}* \breve{\mathcal{U}}_{2}^{H}\right)\\
      &+\mathcal{A}*_1\left(\breve{\mathcal{U}}_{1}* \breve{\mathcal{U}}_{1}^{H}\right)
      *_2\left(\breve{\mathcal{U}}_{2}* \breve{\mathcal{U}}_{2}^{H}\right)\\
      &\cdots\\
      &-\mathcal{A}*_1 \left(\breve{\mathcal{U}}_{1}* \breve{\mathcal{U}}_{1}^{H}\right)\cdots
      *_{N-1}\left( \breve{\mathcal{U}}_{N-1}* \breve{\mathcal{U}}_{N-1}^{H} \right)\\
      &+\mathcal{A}*_1 \left(\breve{\mathcal{U}}_{1}* \breve{\mathcal{U}}_{1}^{H}\right)\cdots
      *_{N-1}\left( \breve{\mathcal{U}}_{N-1}* \breve{\mathcal{U}}_{N-1}^{H} \right)\\
      &-\mathcal{A}*_1 \left(\breve{\mathcal{U}}_{1}* \breve{\mathcal{U}}_{1}^{H}\right)\cdots
      *_N\left( \breve{\mathcal{U}}_{N}* \breve{\mathcal{U}}_{N}^{H} \right)\\
      =&\quad\,\,\mathcal{A}*_1\left(\mathcal{I}-\breve{\mathcal{U}}_{1}* \breve{\mathcal{U}}_{1}^{H}\right)\\
      &+\mathcal{A}*_1\left(\breve{\mathcal{U}}_{1}* \breve{\mathcal{U}}_{1}^{H}\right)
      *_2\left(\mathcal{I}-\breve{\mathcal{U}}_{2}* \breve{\mathcal{U}}_{2}^{H}\right)\\
      &+\cdots\\
      &+\mathcal{A}*_1 \left(\breve{\mathcal{U}}_{1}* \breve{\mathcal{U}}_{1}^{H}\right)\cdots
      *_{N-1}\left( \breve{\mathcal{U}}_{N-1}* \breve{\mathcal{U}}_{N-1}^{H} \right)
      *_N\left(\mathcal{I}-\breve{\mathcal{U}}_{N}* \breve{\mathcal{U}}_{N}^{H}\right).
    \end{aligned}
  \end{equation}
  We now show that any two distinct terms in the above expression are orthogonal to each other in the Frobenius norm. 
  For \(i<j\), let 
  \(\breve{\mathcal{B}}, \breve{\mathcal{C}}\) be the \(i\)-th and \(j\)-th terms in the above 
  expression, respectively.
  Consider 
  \[\breve{\mathcal{B}}_{(i)}:=\left(\mathcal{I}-\breve{\mathcal{U}}_{i}* \breve{\mathcal{U}}_{i}^{H}\right)
  *\mathcal{A}_{(i)}*\left(\mathcal{I}\ot \cdots \ot \mathcal{I} \ot \left(\breve{\mathcal{U}}_{i-1}* \breve{\mathcal{U}}_{i-1}^{H}\right) 
  \ot \cdots \ot 
  \left(\breve{\mathcal{U}}_{1}* \breve{\mathcal{U}}_{1}^{H}\right)\right)^t\]
  and 
  \[\begin{aligned}
    \breve{\mathcal{C}}_{(i)}
  :=&\left(\breve{\mathcal{U}}_{i}* \breve{\mathcal{U}}_{i}^{H}\right)
  *\mathcal{A}_{(i)}* \Biggl( 
    \mathcal{I}\ot \cdots \ot \mathcal{I} \ot
  \left(\mathcal{I}-\breve{\mathcal{U}}_{j}* \breve{\mathcal{U}}_{j}^{H}\right)
   \ot \left(\mathcal{I}-\breve{\mathcal{U}}_{j-1} * \breve{\mathcal{U}}_{j-1}^{H}\right) \ot \cdots\\
  &\ot \left(\breve{\mathcal{U}}_{i+1}* \breve{\mathcal{U}}_{i+1}^{H}\right)
  \ot \left(\breve{\mathcal{U}}_{i-1}* \breve{\mathcal{U}}_{i-1}^{H}\right) 
  \ot \cdots \ot 
  \left(\breve{\mathcal{U}}_{1}* \breve{\mathcal{U}}_{1}^{H}\right) \Biggr)^t.
  \end{aligned}\]
  Then \(\breve{\mathcal{C}}_{(i)}^{H}*\breve{\mathcal{B}}_{(i)}=\mathcal{O},\) since 
  \(\left(\breve{\mathcal{U}}_{i}* \breve{\mathcal{U}}_{i}^{H}\right)^{H} * 
  \left(\mathcal{I}-  \breve{\mathcal{U}}_{i}* \breve{\mathcal{U}}_{i}^{H}\right) = \mathcal{O}\).
  Then \cref{prop:orthoganalityinfrobeniusnorm} implies that the \(i\)-th term and the \(j\)-th term are orthogonal in the Frobenius norm. 
\end{proof}

Now we consider the truncated Hot-SVD. 
\begin{theorem}[Error bound for tr-Hot-SVD] \label{th:error_bound_tr_hotsvd}
  Suppose \(L=cW\), where \(W:\mathbb{C}_p \to \mathbb{C}_p\) is a unitary transformation \(W\) and \(c \in \mathbb{C}\) is a non-zero scalar.
  Let \(\mathcal{A} \in \mathbb{C}_p^{I_1 \times \cdots \times I_N}\) be a tubal tensor and 
  \[ \mathcal{A} = \mathcal{S} *_1 \mathcal{U}_{1} \cdots *_N \mathcal{U}_{N}\]
  its Hot-SVD.
  Define \(\widehat{A}\) to be 
  \[ \widehat{\mathcal{A}} = \widehat{\mathcal{S}} *_1 \mathcal{U}_{1} \cdots *_N \mathcal{U}_{N},\]
  where \(\widehat{\mathcal{S}}\) is obtained by truncation of the first \(I_1'\times \cdots \times I_N'\)  tubal scalars of 
  \(\widehat{\mathcal{S}}\) (with other tubal scalars being zero).
  Then 
  \[
  \begin{aligned}
    \left\|\mathcal{A}-\widehat{\mathcal{A}}\right\| 
    &\le \sqrt{\sum_{i_1=I_1'+1}^{R_1} (\sigma_{i_1}^{(1)})^2+ \cdots +\sum_{i_N=I_N'+1}^{R_N}(\sigma_{i_N}^{(N)})^2}\\
    &\le \sqrt N\left\|\mathcal{A} - \mathcal{A}^*\right\|,
  \end{aligned}
  \]
  where \(R_n\) is the t-rank of \(\mathcal{A}_{(n)}\), \(\sigma_{i_n}^{(n)}\) is the Frobenius norm of the \(i_n\)-th row of \(\mathcal{S}_{(n)}\) (which is equal to the Frobenius norm 
  of the \(i_n\)-th tubal scalar on the diagonal of \(\Sigma_n\), the tubal
  matrix appearing in the t-SVD of \(\mathcal{A}_{(n)}\)),
  \(\mathcal{A}^*=\mathcal{S}^* *_1 U_1^* \cdots *_N U_N^*\), 
  and \((\mathcal{S}^*, U_1^*,\ldots,U_N^*)\) is the optimal solution to the following (tensor-tensor product-based)
  low-rank approximation problem:
      
  \[
    \begin{aligned}
      &\min_{\mathcal{S},\mathcal U_1,\ldots,\mathcal U_N}\left\|\mathcal{A}-\mathcal{S}*_1 \mathcal U_1 \cdots *_N \mathcal U_N\right\|,\\
      &\,\,\,\quad \mathrm{ s.t.} \quad \mathcal{U}_1^H*\mathcal{U}_1=\mathcal{I}_{I_1'}, \cdots, \mathcal{U}_N^H*\mathcal{U}_N=\mathcal{I}_{I_N'},\\
      &\,\,\,\quad \quad \quad \mathcal{U}_n \in \mathbb{C}_p^{I_n\times I_n'} \, (n=1,\ldots,N), \mathcal{S}\in \mathbb{C}_P^{I_1'\times \cdots \times I_N'}.
    \end{aligned}
  \]
\end{theorem}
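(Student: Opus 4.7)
The plan is to prove the two inequalities in sequence, leveraging the machinery that has already been built, namely the Pythagorean-style expansion in \cref{prop:error}, the contraction in \cref{prop:normdecrease}, and the Eckart--Young theorem for tubal matrices \cref{thm:EY}.

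First I would reinterpret $\widehat{\mathcal{A}}$ in projection form. Writing $\breve{\mathcal{U}}_n \in \mathbb{C}_p^{I_n \times I_n'}$ for the first $I_n'$ columns of $\mathcal{U}_n$, truncating the core $\mathcal{S}$ to its leading $I_1' \times \cdots \times I_N'$ block and reassembling with the full $\mathcal{U}_n$'s gives exactly
\[
\widehat{\mathcal{A}} \;=\; \mathcal{A}*_1(\breve{\mathcal{U}}_{1}*\breve{\mathcal{U}}_{1}^{H})\cdots *_N(\breve{\mathcal{U}}_{N}*\breve{\mathcal{U}}_{N}^{H}),
\]
where each $\breve{\mathcal{U}}_n$ is partially unitary since $\mathcal{U}_n$ is unitary. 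This puts $\widehat{\mathcal{A}}$ in exactly the form handled by \cref{prop:error}.

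Next I would apply \cref{prop:error} to obtain a sum of $N$ squared Frobenius norms, each of the form
\[
E_n \;:=\; \bigl\|\mathcal{A}*_1(\breve{\mathcal{U}}_{1}*\breve{\mathcal{U}}_{1}^{H})\cdots *_{n-1}(\breve{\mathcal{U}}_{n-1}*\breve{\mathcal{U}}_{n-1}^{H})*_n(\mathcal{I}-\breve{\mathcal{U}}_{n}*\breve{\mathcal{U}}_{n}^{H})\bigr\|^2.
\]
Iterated application of \cref{prop:normdecrease} along the first $n-1$ modes yields $E_n \le \|\mathcal{A}*_n(\mathcal{I}-\breve{\mathcal{U}}_{n}*\breve{\mathcal{U}}_{n}^{H})\|^2$. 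To evaluate the latter, I would pass to the mode-$n$ unfolding: since $\mathcal{A}_{(n)} = \mathcal{U}_n * \Sigma_n * \mathcal{V}_n^H$ and $\breve{\mathcal{U}}_n$ consists of the first $I_n'$ columns of $\mathcal{U}_n$, a direct computation using unitarity of $\mathcal{U}_n$ gives
\[
(\mathcal{I}-\breve{\mathcal{U}}_{n}*\breve{\mathcal{U}}_{n}^{H})*\mathcal{A}_{(n)} \;=\; \mathcal{U}_n(:,I_n'+1\!:\!I_n)*\Sigma_n(I_n'+1\!:\!I_n,\,:)*\mathcal{V}_n^H,
\]
whose squared Frobenius norm equals $\sum_{i_n=I_n'+1}^{R_n}(\sigma_{i_n}^{(n)})^2$ by unitary invariance of the Frobenius norm (valid under the hypothesis $L=cW$, by \cite[Theorem 3.1]{kilmer2021tensor}) together with the all-orthogonality of $\mathcal{S}$ established in \cref{thm:Hot-SVD}. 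Summing over $n$ and taking the square root gives the first inequality.

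For the second inequality, the key observation is that for any admissible $(\mathcal{S}^*, \mathcal{U}_1^*,\ldots,\mathcal{U}_N^*)$ the mode-$n$ unfolding $\mathcal{A}^*_{(n)}$ factors through $\mathcal{U}_n^*$, so its t-rank is at most $I_n'$. Applying \cref{thm:EY} to $\mathcal{A}_{(n)}$ yields
\[
\sum_{i_n=I_n'+1}^{R_n}(\sigma_{i_n}^{(n)})^2 \;\le\; \|\mathcal{A}_{(n)}-\mathcal{A}^*_{(n)}\|^2 \;=\; \|\mathcal{A}-\mathcal{A}^*\|^2
\]
(Frobenius norm is preserved by unfolding). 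Summing this single-mode bound over $n=1,\ldots,N$ and combining with the first inequality produces the factor $\sqrt{N}$. I expect the main obstacle to be the careful identification $\|\mathcal{A}*_n(\mathcal{I}-\breve{\mathcal{U}}_n*\breve{\mathcal{U}}_n^H)\|^2 = \sum_{i_n=I_n'+1}^{R_n}(\sigma_{i_n}^{(n)})^2$, since this is the one place where one has to unfold, invoke the explicit Hot-SVD structure from \cref{thm:Hot-SVD}, and verify that the Frobenius norm of a truncated tail of an f-diagonal product truly collapses to the sum of tail singular-tubal-scalar norms under the assumption $L=cW$.
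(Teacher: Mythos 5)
Your proposal is correct and follows essentially the same route as the paper's own proof: rewrite $\widehat{\mathcal{A}}$ as a product of mode-wise projections, expand the error with \cref{prop:error}, contract each term with \cref{prop:normdecrease}, identify the single-mode residual with the tail sum $\sum_{i_n=I_n'+1}^{R_n}(\sigma_{i_n}^{(n)})^2$ via the t-SVD of $\mathcal{A}_{(n)}$, and bound each tail sum by $\|\mathcal{A}-\mathcal{A}^*\|^2$ using the Eckart--Young result \cref{thm:EY} applied to the unfoldings. The only cosmetic difference is that the paper attributes the collapse of the truncated tail to the pairwise orthogonality of the rank-one terms $\mathcal{U}_n(:,i_n)*\Sigma_n(:,i_n)*\mathcal{V}_n(:,i_n)^H$ (\cref{prop:orthoganalityinfrobeniusnorm}) rather than to the all-orthogonality of $\mathcal{S}$, but these amount to the same computation.
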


\begin{proof}
Let \(\mathcal{A}_{(n)}=\mathcal{U}_n * \Sigma_n * \mathcal{V}_n^{H}\) be the t-SVD of the mode-\(n\) unfolding.
Define \(\overline{\mathcal{U}}_{n}\) to be the tubal matrix obtained from the first \(I'_n\) tubal column vectors of \(\mathcal{U}_n\).
By definition 
\[\widehat{\mathcal{A}}=\widehat{\mathcal{S}}
*_1 \mathcal{U}_1\cdots *_N \mathcal{U}_N
=\widetilde{\mathcal{S}}*_1 \overline{\mathcal{U}}_1\cdots *_N \overline{\mathcal{U}}_N,\]
where \(\widetilde{\mathcal{S}} \in \mathbb{C}_p^{I'_1
\times \cdots \times I'_N}\) is obtained by truncation of
\(\mathcal{S}\).
Note also that
\[\widetilde{\mathcal{S}}=
\mathcal{A}*_1 \overline{\mathcal{U}}_{1}^H \cdots *_N\overline{\mathcal{U}}_{N}^H,\]
since \(\mathcal{S}=
\mathcal{A}*_1 \mathcal{U}_{1}^H \cdots *_N\mathcal{U}_{N}^H.\)
Then we have 
\begin{equation}
  \begin{aligned}
    \widehat{\mathcal{A}} = &\left(\mathcal{A} *_1 \overline{\mathcal{U}}_{1}^H \cdots *_N\overline{\mathcal{U}}_{N}^H\right)
  *_1 \overline{\mathcal{U}}_{1} \cdots *_N\overline{\mathcal{U}}_{N}\\
  =&\mathcal{A}*_1 \left(\overline{\mathcal{U}}_{1}*\overline{\mathcal{U}}_{1}^H\right) \cdots 
  *_N \left( \overline{\mathcal{U}}_{N} * \overline{\mathcal{U}}_{N}^H \right).
  \end{aligned}
\end{equation}

Then by \cref{prop:error} we have 
\begin{equation}\label{eq:errortruncatedhotsvd}
  \begin{aligned}
    &\|\mathcal{A}-\widehat{\mathcal{A}}\|^2\\
    =&\|\mathcal{A}- \mathcal{A}*_1 \left(\overline{\mathcal{U}}_{1}*\overline{\mathcal{U}}_{1}^H\right) \cdots 
    *_N \left( \overline{\mathcal{U}}_{N} * \overline{\mathcal{U}}_{N}^H \right)\|^2\\
    =&\quad\,\|\mathcal{A}*_1\left(\mathcal{I}-\overline{\mathcal{U}}_{1}* \overline{\mathcal{U}}_{1}^{H}\right)\|^2\\
  &+\|\mathcal{A}*_1\left(\overline{\mathcal{U}}_{1} *\overline{\mathcal{U}}_{1}^{H}\right) 
  *_2 \left(\mathcal{I}-\overline{\mathcal{U}}_{2} *\overline{\mathcal{U}}_{2}^{H}\right)\|^2\\
  &+\|\mathcal{A}*_1\left(\overline{\mathcal{U}}_{1}* \overline{\mathcal{U}}_{1}^{H}\right)
  *_2\left(\overline{\mathcal{U}}_{2}* \overline{\mathcal{U}}_{2}^{H}\right)
  *_3\left(\mathcal{I} - \overline{\mathcal{U}}_{3} *\overline{\mathcal{U}}_{3}^{H}\right)\|^2\\
  &+\cdots\\
  &+\|\mathcal{A}*_1\left(\overline{\mathcal{U}}_{1}* \overline{\mathcal{U}}_{1}^{H}\right)
  \cdots
  *_{N-1}\left(\overline{\mathcal{U}}_{N-1}* \overline{\mathcal{U}}_{N-1}^{H}\right)
  *_N\left(\mathcal{I}- \overline{\mathcal{U}}_{N}* \overline{\mathcal{U}}_{N}^{H}\right)\|^2.\\
  \end{aligned}
\end{equation}
For the \(n\)-th term in the above expression, by \cref{prop:normdecrease}, we have 
\begin{equation}\label{eq:errordecrease}
  \begin{aligned}
  &\|\mathcal{A}*_1\left(\overline{\mathcal{U}}_{1}* \overline{\mathcal{U}}_{1}^{H}\right)
  \cdots
  *_{n-1}\left(\overline{\mathcal{U}}_{n-1}* \overline{\mathcal{U}}_{n-1}^{H}\right)
  *_n\left(\mathcal{I}- \overline{\mathcal{U}}_{n}* \overline{\mathcal{U}}_{n}^{H}\right)\|\\
  \le
  &\|\mathcal{A}*_n\left(\mathcal{I}- \overline{\mathcal{U}}_{n}* \overline{\mathcal{U}}_{n}^{H}\right)\|.
\end{aligned}
\end{equation}

Note that
  \begin{equation}\label{eq:estimate4}
    \begin{aligned}
    &\|\mathcal{A}*_n\left(\mathcal{I}-\overline{\mathcal{U}}_{n}* \overline{\mathcal{U}}_{n}^{H}\right)\|^2\\
    =&\|\left(\mathcal{I}-\overline{\mathcal{U}}_{n}* \overline{\mathcal{U}}_{n}^{H}\right)* \mathcal{A}_{(n)} \|^2\\
    =&\|\left(\mathcal{I}-\overline{\mathcal{U}}_{n}* \overline{\mathcal{U}}_{n}^{H}\right)* 
    \mathcal{U}_n *\Sigma_n *\mathcal{V}_n^{H}\|^2\\
    =&\|\left(\mathcal{U}_n-\overline{\mathcal{U}}_{n}* \overline{\mathcal{U}}_{n}^{H}*\mathcal{U}_n\right)
    *\Sigma_n *\mathcal{V}_n^{H}\|^2\\
    =&\|\left(\mathcal{U}_n-\begin{pmatrix}
      \mathcal{U}_{n}(:,1) & \cdots & \mathcal{U}_{n}(:,I'_n)
    \end{pmatrix} * \begin{pmatrix}
      \mathcal{U}_{n}(:,1)^{H} \\  \vdots \\ \mathcal{U}_{n}(:,I'_n)^{H}
    \end{pmatrix}
     * \begin{pmatrix}
      \mathcal{U}_{n}(:,1) & \cdots & \mathcal{U}_{n}(:,I_n)
    \end{pmatrix} \right)\\
    &*\Sigma_n *\mathcal{V}_n^{H}\|^2\\
    =&\|\left(\mathcal{U}_n-\begin{pmatrix}
      \mathcal{U}_{n}(:,1) & \cdots & \mathcal{U}_{n}(:,I'_n)
    \end{pmatrix}*\begin{pmatrix}
      \mathcal{I}_{I'_n} & \mathcal{O}
    \end{pmatrix}
    \right)*\Sigma_n *\mathcal{V}_n^{H}\|^2\\
    =&\|\left(
      \mathcal{U}_n- \begin{pmatrix}
        \mathcal{U}_{n}(:,1) & \mathcal{U}_{n}(:,2) & \cdots  & \mathcal{U}_{n}(:,I'_n) & \mathcal{O}
      \end{pmatrix}
    \right)*\Sigma_n *\mathcal{V}_n^{H}\|^2\\
    =&\|\sum_{i_n=I'_n+1}^{R_n}\mathcal{U}_n(:,i_n)*\Sigma_n(:,i_n) *\mathcal{V}_n(:,i_n)^{H} \|^2\\
    \overset{\ref{prop:orthoganalityinfrobeniusnorm}}{=}&\sum_{i_n=I'_n+1}^{R_n}\|\mathcal{U}_n(:,i_n)*\Sigma_n(:,i_n) *\mathcal{V}_n(:,i_n)^{H} \|^2\\
    =&\sum_{i_n=I'_n+1}^{R_n}\|\Sigma_n(:,i_n)\|^2\\
    =&\sum_{i_n=I'_n+1}^{R_n} (\sigma_{i_n}^{(n)})^2,
  \end{aligned}\end{equation}
where the second to last equality comes from the unitary invariance of the Frobenius norm under the 
tensor-tensor product.

Set \(\bar{\mathcal{A}}_{(1)} = \mathcal{U}^* * S^* * \mathcal{V}^{*H}\), where \((S^*,\mathcal{U}^*,\mathcal{V}^*)\) is the optimal solution to the following 
low-rank approximation problem:
  \[
    \begin{aligned}
      &\min_{S,\mathcal{U},\mathcal{V}} \left\|\mathcal{A}_{(1)}-\mathcal{U}*S*\mathcal{V}^T\right\|,\\
      &\,\,\,\, \mathrm{ s.t.} \quad \mathcal{U} \in \mathbb{C}_p^{I_1 \times I_1'}, S\in \mathbb{C}_p^{I_1'\times I_1'}, 
      \mathcal{V}\in \mathbb{C}_p^{(I_2\times \cdots \times I_N)\times I_1'},\\
      &\quad \quad \quad \mathcal{U}^H*\mathcal{U}=I_{I_1'}, \mathcal{V}^H*\mathcal{V}=I_{I_1'}.
    \end{aligned}
  \]

  Then the equality \(\sum_{i_1=I_1'+1}^{I_1}(\sigma_{i_1}^{(1)})^2=\|\mathcal{A}_{(1)}-\bar{\mathcal{A}}_{(1)}\|^2\)
  follows from the Eckart-Young type result for tubal matrices \cref{thm:EY}, since 
  \[\bar{\mathcal{A}}_{(1)} = \sum_{i=1}^{I_1'}\mathcal{U}_{1}(:,i)*S^{(1)}(i,i)*\mathcal{V}^{(1)}(:,i)^H\]
  is the optimal solution to the above problem.

  Similarly we have equalities

  \[
    \sum_{i_2=I_2'+1}^{I_2}(\sigma_{i_2}^{(2)})^2=\|\mathcal{A}_{(2)}-\bar{\mathcal{A}}_{(2)}\|^2,
        \ldots,
    \sum_{i_N=I_N'+1}^{I_N}(\sigma_{i_N}^{N})^2=\|\mathcal{A}_{(N)}-\bar{\mathcal{A}}_{(N)}\|^2.
\]

Therefore, we have

  \[
    \begin{aligned}
      &\left\|\mathcal{A}-\widehat{\mathcal{A}}\right\|^2 \\
      \le&\sum_{i_1=I_1'+1}^{I_1}(\sigma_{i_1}^{(1)})^2+\cdots+\sum_{i_N=I_N'+1}^{I_N}(\sigma_{i_N}^{(N)})^2\\
      =&\left\|\mathcal{A}_{(1)}-\bar{\mathcal{A}}_{(1)}\right\|^2+\cdots+\left\|\mathcal{A}_{(N)}-\bar{\mathcal{A}}_{(N)}\right\|^2\\
      \le& \left\|\mathcal{A}_{(1)}-\mathcal{A}^{*}_{(1)}\right\|^2+\cdots+\left\|\mathcal{A}_{(N)}-\mathcal{A}^{*}_{(N)}\right\|^2\\
      =&N\left\|\mathcal{A}-\mathcal{A}^*\right\|^2,
    \end{aligned}
  \]
where the first inequality comes from \cref{eq:errortruncatedhotsvd,eq:errordecrease,eq:estimate4}, the third equality comes from the unitary invariance of the Frobenius norm 
under the tensor-tensor product (\cite[Theorem 3.1]{kilmer2021tensor})
and the inequality in the second to the last line is due to the Eckart-Young theorem for tubal tensors (\cref{thm:EY}). 
\end{proof}

Note that the truncated   HOSVD of an \((N+1)\)-th order tensor \(\mathcal{A}\) leads to an error bound of the form 
\(\|\mathcal{A}-\widehat{\mathcal{A}}\| \le \sqrt{N+1}\|\mathcal{A}-\mathcal{A}^*\|\)
(where \(\widehat{\mathcal{A}}\) is obtained by the truncated   HOSVD and \(\mathcal{A}^*\) is the solution to 
the usual (i.e. not based on tensor-tensor product) orthogonal low-rank approximation problem. 

The sequentially truncated Hot-SVD can be run in the order \(p_1,\ldots,p_N\) defined by a permutaion of \(1,\ldots,N\).
In practice,   the processing order is important. For theoretical 
analysis of the error bound, we only need to consider the case \(1,\ldots,N\).

Now we are ready to prove the error bound for sequentially truncated Hot-SVD.
\begin{theorem}[Error bound for sequentially truncated Hot-SVD]\label{thm:errorboundst}
  Suppose that \(\mathcal{A} \in \mathbb{C}_p^{I_1 \times \cdots \times I_N}\) is a tubal tensor of order \(N\). Let 
  \(\mathcal{A}_{(n)} = \mathcal{U}_n*\Sigma_n*\mathcal{V}_n^{H}\) be a t-SVD of the mode-\(n\) unfolding of \(\mathcal{A}\).
  Let \(R_n\) be the t-rank of \(\mathcal{A}_{(n)}\), i.e., \(R_n\) is the number of non-zero tubal scalars on the diagonal
  of \(\Sigma_n\).
  Let \(\sigma^{(n)}_{1},\ldots,\sigma^{(n)}_{R_n}\) be the Frobenius norms of the non-zero tubal scalars on the diagonal of \(\Sigma_n\).
  Let \(\widehat{\mathcal{A}}\) be a rank-\((I'_1,\ldots,I'_N)\) sequentially truncated Hot-SVD of \(\mathcal{A}\) where 
  \(I'_1\le R_1,\ldots,I'_N \le R_N\). That is, 
  \[\widehat{\mathcal{A}} = \widehat{\mathcal{S}}*_1\widehat{\mathcal{U}}_1\cdots*_N\widehat{\mathcal{U}}_N,\]
  where \(\widehat{\mathcal{S}},\widehat{\mathcal{U}}_1,\ldots,\widehat{\mathcal{U}}_N\) are 
  as constructed in \cref{alg:seq_tr_hotsvd}.
  In other words, \(\widehat{\mathcal{U}}_n\)  is the tubal matrix formed by the first \(I'_n\)  column tubal vectors 
  of the left unitary tubal matrix of the t-SVD of 
 \[\left(\mathcal{A} *_1\widehat{\mathcal{U}}_{1}^{H}  
 \cdots
 *_{n-1}\widehat{\mathcal{U}}_{n-1}^{H}\right)_{(n)},\]
 \[\widehat{\mathcal{S}}=\mathcal{A} *_1\widehat{\mathcal{U}}_{1}^{H}  
 \cdots
 *_{N}\widehat{\mathcal{U}}_{N}^{H},\]
 and 
 \[\widehat{\mathcal{A}}=\mathcal{A} *_1\left(\widehat{\mathcal{U}}_{1} * \widehat{\mathcal{U}}_{1}^{H}\right)  
 \cdots
 *_{N}\left(\widehat{\mathcal{U}}_{N}* \widehat{\mathcal{U}}_{N}^{H}\right).\]
 Then we have the following error bound
  \[\|\mathcal{A}-\widehat{\mathcal{A}}\| 
    \le \sqrt{\sum_{i_1=I_1'+1}^{R_1} (\sigma_{i_1}^{(1)})^2+ \cdots +\sum_{i_N=I_N'+1}^{R_N}(\sigma_{i_N}^{(N)})^2} \leq \sqrt N\|\mathcal A - \mathcal A^*\|,\]
where $\mathcal A^*$ was defined in Theorem \ref{th:error_bound_tr_hotsvd}. 
\end{theorem}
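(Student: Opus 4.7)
The plan is to parallel the proof of \cref{th:error_bound_tr_hotsvd}, with one additional structural step that accounts for the sequential nature of the truncation. First I would invoke \cref{prop:error} with $\breve{\mathcal{U}}_n = \widehat{\mathcal{U}}_n$ to obtain the Pythagorean-type expansion
\[ \|\mathcal{A}-\widehat{\mathcal{A}}\|^2 = \sum_{n=1}^N T_n^2, \qquad T_n := \bigl\|\mathcal{A}*_1\widehat{\mathcal{P}}_1\cdots*_{n-1}\widehat{\mathcal{P}}_{n-1}*_n(\mathcal{I}-\widehat{\mathcal{P}}_n)\bigr\|, \]
where $\widehat{\mathcal{P}}_k := \widehat{\mathcal{U}}_k*\widehat{\mathcal{U}}_k^H$. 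It then suffices to bound each $T_n^2$ by $\sum_{i_n=I'_n+1}^{R_n}(\sigma^{(n)}_{i_n})^2$; once this is done the second inequality $\sqrt{\cdots}\le \sqrt N\|\mathcal{A}-\mathcal{A}^*\|$ follows verbatim from the closing lines of the proof of \cref{th:error_bound_tr_hotsvd} by invoking \cref{thm:EY} mode by mode.

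Next I would strip the partial isometries in modes $1,\ldots,n-1$ from $T_n$. Let $\widehat{\mathcal{S}}^{(n-1)} := \mathcal{A}*_1\widehat{\mathcal{U}}_1^H\cdots*_{n-1}\widehat{\mathcal{U}}_{n-1}^H$ denote the intermediate tensor produced by \cref{alg:seq_tr_hotsvd} after $n-1$ steps. Using $\widehat{\mathcal{P}}_k=\widehat{\mathcal{U}}_k*\widehat{\mathcal{U}}_k^H$, commutativity of $n$-mode products across distinct modes, and the unitary invariance of the Frobenius norm under partially unitary tubal matrices (a direct consequence of $\widehat{\mathcal{U}}_k^H*\widehat{\mathcal{U}}_k=\mathcal{I}$ together with \cite[Theorem~3.1]{kilmer2021tensor}), I would show
\[ T_n = \bigl\|\widehat{\mathcal{S}}^{(n-1)}*_n(\mathcal{I}-\widehat{\mathcal{P}}_n)\bigr\| = \bigl\|(\mathcal{I}-\widehat{\mathcal{P}}_n)*\widehat{\mathcal{S}}^{(n-1)}_{(n)}\bigr\|. \]

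The third step is to replace the singular values of $\widehat{\mathcal{S}}^{(n-1)}_{(n)}$ by those of $\mathcal{A}_{(n)}$. By construction $\widehat{\mathcal{U}}_n$ collects the leading $I'_n$ left singular tubal vectors of $\widehat{\mathcal{S}}^{(n-1)}_{(n)}$, so \cref{thm:EY} yields $T_n \le \|(\mathcal{I}-\bar{\mathcal{P}}_n)*\widehat{\mathcal{S}}^{(n-1)}_{(n)}\|$ for the projection $\bar{\mathcal{P}}_n := \bar{\mathcal{U}}_n*\bar{\mathcal{U}}_n^H$ built from the first $I'_n$ columns $\bar{\mathcal{U}}_n$ of the left factor in the t-SVD of $\mathcal{A}_{(n)}$. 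Appealing to \cref{prop:matrep} to write
\[ \widehat{\mathcal{S}}^{(n-1)}_{(n)} = \mathcal{A}_{(n)}*\mathcal{Y}, \qquad \mathcal{Y} := \bigl(\mathcal{I}\ot\cdots\ot\mathcal{I}\ot\widehat{\mathcal{U}}_{n-1}^H\ot\cdots\ot\widehat{\mathcal{U}}_1^H\bigr)^t, \]
the task reduces to proving $\mathcal{Y}^H*\mathcal{Y}=\mathcal{I}$; for then $\mathcal{Y}*\mathcal{Y}^H$ is a Hermitian idempotent, so right-multiplication by $\mathcal{Y}$ is a Frobenius-norm contraction and
\[ T_n \le \bigl\|(\mathcal{I}-\bar{\mathcal{P}}_n)*\mathcal{A}_{(n)}*\mathcal{Y}\bigr\| \le \bigl\|(\mathcal{I}-\bar{\mathcal{P}}_n)*\mathcal{A}_{(n)}\bigr\| = \sqrt{\textstyle\sum_{i_n=I'_n+1}^{R_n}(\sigma^{(n)}_{i_n})^2}, \]
the last equality being the computation already carried out as \cref{eq:estimate4}.

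I expect the main obstacle to be the identity $\mathcal{Y}^H*\mathcal{Y}=\mathcal{I}$. Its verification combines \cref{prop:smallttrans} (to interchange the small-t transpose with tensor-tensor products), parts (i)--(iii) of \cref{prop:kro} (to distribute Hermitian and small-t transposes across the Kronecker factors and to split block products), and the compatibility relation $((\widehat{\mathcal{U}}_k^H)^t)^H = \widehat{\mathcal{U}}_k^t$, which I would check directly in the $L$-transformed domain along the lines of the proof of \cref{prop:tunitary}. Once this is secured, the $\sqrt{\cdots}$ bound follows by summing over $n$, and the final $\sqrt N$ factor is obtained exactly as in \cref{th:error_bound_tr_hotsvd}.
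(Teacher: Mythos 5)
Your proposal is correct and follows essentially the same route as the paper's proof: the orthogonal expansion from \cref{prop:error}, the Eckart--Young comparison (via \cref{thm:EY}) of $\widehat{\mathcal{U}}_n$ against the truncation $\overline{\mathcal{U}}_n$ of the t-SVD of $\mathcal{A}_{(n)}$, and the evaluation \cref{eq:estimate4}, with the closing $\sqrt{N}$ step inherited from \cref{th:error_bound_tr_hotsvd}. The only cosmetic difference is in how the already-processed modes are discarded: the paper converts back to projection form and applies \cref{prop:normdecrease}, whereas you keep the unfolded form $\mathcal{A}_{(n)}*\mathcal{Y}$ and use the contraction property of the column-orthonormal Kronecker factor $\mathcal{Y}$ --- an equivalent argument, and your key identity $\mathcal{Y}^H*\mathcal{Y}=\mathcal{I}$ does hold, since $(\mathcal{Z}^t)^H=(\mathcal{Z}^H)^t$ is verified slice-wise in the $L$-domain exactly as in the proof of \cref{prop:tunitary}.
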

\begin{proof}
  By \cref{prop:error}, we have
  \begin{equation}\label{eq:estimate1}
    \begin{aligned}  
  &\quad\,\|\mathcal{A}-\widehat{\mathcal{A}}\|^2\\
  =&\quad\,\|\mathcal{A}*_1\left(\mathcal{I}-\widehat{\mathcal{U}}_{1}* \widehat{\mathcal{U}}_{1}^{H}\right)\|^2\\
  &+\|\mathcal{A}*_1\left(\widehat{\mathcal{U}}_{1} *\widehat{\mathcal{U}}_{1}^{H}\right) 
  *_2 \left(\mathcal{I}-\widehat{\mathcal{U}}_{2} *\widehat{\mathcal{U}}_{2}^{H}\right)\|^2\\
  &+\|\mathcal{A}*_1\left(\widehat{\mathcal{U}}_{1}* \widehat{\mathcal{U}}_{1}^{H}\right)
  *_2\left(\widehat{\mathcal{U}}_{2}* \widehat{\mathcal{U}}_{2}^{H}\right)
  *_3\left(\mathcal{I} - \widehat{\mathcal{U}}_{3} *\widehat{\mathcal{U}}_{3}^{H}\right)\|^2\\
  &+\cdots\\
  &+\|\mathcal{A}*_1\left(\widehat{\mathcal{U}}_{1}* \widehat{\mathcal{U}}_{1}^{H}\right)
  \cdots
  *_{N-1}\left(\widehat{\mathcal{U}}_{N-1}* \widehat{\mathcal{U}}_{N-1}^{H}\right)
  *_N\left(\mathcal{I}- \widehat{\mathcal{U}}_{N}* \widehat{\mathcal{U}}_{N}^{H}\right)\|^2.\\
  \end{aligned}
\end{equation}
  Let \(\overline{\mathcal{U}}_n\) be the tubal matrix formed by the first \(I'_n\)  column tubal vectors
  of the left unitary tubal matrix of the t-SVD of \(\mathcal{A}_{(n)}\). That is,
  \(\overline{\mathcal{U}}_1,\ldots,\overline{\mathcal{U}}_N\) are the unitary tubal matrices obtained from 
  the truncated Hot-SVD of \(\mathcal{A}\).
  For the \(n\)-th term in the above expression, we have 
  \begin{equation}\label{eq:estimate2}
    \begin{aligned}
      &\|\mathcal{A}*_1\left(\widehat{\mathcal{U}}_{1}* \widehat{\mathcal{U}}_{1}^{H}\right)
  \cdots
  *_{n-1}\left(\widehat{\mathcal{U}}_{n-1}* \widehat{\mathcal{U}}_{n-1}^{H}\right)
  *_n\left(\mathcal{I}- \widehat{\mathcal{U}}_{n}* \widehat{\mathcal{U}}_{n}^{H}\right)\|^2\\
  =
  &\|\mathcal{A}*_1\left(\widehat{\mathcal{U}}_{1}^{H}\right)
  \cdots
  *_{n-1}\left(\widehat{\mathcal{U}}_{n-1}^{H}\right)
  *_n\left(\mathcal{I}- \widehat{\mathcal{U}}_{n}* \widehat{\mathcal{U}}_{n}^{H}\right)\|^2\\
  \le 
&\|\mathcal{A}*_1\left(\widehat{\mathcal{U}}_{1}^{H}\right)
  \cdots
  *_{n-1}\left(\widehat{\mathcal{U}}_{n-1}^{H}\right)
  *_n\left(\mathcal{I}- \overline{\mathcal{U}}_{n}* \overline{\mathcal{U}}_{n}^{H}\right)\|^2\\
  = 
&\|\mathcal{A}*_1\left(\widehat{\mathcal{U}}_{1}* \widehat{\mathcal{U}}_{1}^{H}\right)
  \cdots
  *_{n-1}\left(\widehat{\mathcal{U}}_{n-1}* \widehat{\mathcal{U}}_{n-1}^{H}\right)
  *_n\left(\mathcal{I}- \overline{\mathcal{U}}_{n}* \overline{\mathcal{U}}_{n}^{H}\right)\|^2,
    \end{aligned}
  \end{equation}
where the inequality holds due to the the Eckart-Young theorem for t-SVD (since \(\widehat{\mathcal{U}}_n\), being  
obtained from the truncation of t-SVD gives the minimum)
and the equalities hold due to the unitary invariance of the Frobenius norm under the tensor-tensor product.
By \cref{prop:normdecrease}, we have 
\begin{equation}\label{eq:estimate3}
  \begin{aligned}
    &\|\mathcal{A}*_1\left(\widehat{\mathcal{U}}_{1}* \widehat{\mathcal{U}}_{1}^{H}\right)
  \cdots
  *_{n-1}\left(\widehat{\mathcal{U}}_{n-1}* \widehat{\mathcal{U}}_{n-1}^{H}\right)
  *_n\left(\mathcal{I}- \overline{\mathcal{U}}_{n}* \overline{\mathcal{U}}_{n}^{H}\right)\|^2\\
  \le 
  &\|\mathcal{A}  *_n\left(\mathcal{I}- \overline{\mathcal{U}}_{n}* \overline{\mathcal{U}}_{n}^{H}\right)\|^2.
  \end{aligned}
\end{equation}
Note that by \cref{eq:estimate4}, we have 
\begin{equation}\label{eq:estimate5}\|\mathcal{A}*_n\left(\mathcal{I}-\overline{\mathcal{U}}_{n}* \overline{\mathcal{U}}_{n}^{H}\right)\|^2
=\sum_{i_n=I'_n+1}^{R_n} (\sigma_{i_n}^{(n)})^2.\end{equation}
Combining \cref{eq:estimate1,eq:estimate2,eq:estimate3,eq:estimate5}, we obtain the desired error bound.
\end{proof}

\begin{remark}
  The notion of the generalized HOSVD was previously introduced in \cite[Sect. 4.2]{Liao2020} and \cite[Sect. V-C]{Liao2022}, in the context of generalized tensors (the notion of generalized tensors is a generalization of that of tubal tensors, and the generalized HOSVD was called THOSVD there). However, no proof of the existence of THOSVD was given in \cite{Liao2020,Liao2022} nor did the authors 
 explore properties such as all-orthogonality and ordering of the core tubal tensor $\mathcal S$. In fact, one of our main contributions  is the rigorous proof of the existence of Hot-SVD, where the basic definitions and properties introduced and developed in Sect. \ref{sec:smallt}  play important roles in the proof. 
  Moreover, we   introduce the truncated Hot-SVD, sequentially truncated Hot-SVD, and establish their error bounds. In addition, several properties of HOSVD are also generalized to Hot-SVD.
These  were not presented in \cite{Liao2022,Liao2020}.
\end{remark}

\section{Computational Complexity}
\label{sec:comp}
In this section, assuming that \(L\) is the DFT, we 
analyze the computational complexity of Hot-SVD, tr-Hot-SVD, and seq-tr-Hot-SVD.

To obtain the Hot-SVD of \(\mathcal{A}\) we perform the following operations:

0. Regard \(\mathcal{A} \in \mathbb{R}_{p}^{I_1 \times \cdots \times I_{N}}\) as a tubal tensor of order \(N\).

1. Apply FFT to \(\mathcal{A}_{(1)}\) to get \(\widehat{\mathcal{A}}_{(1)} \), ..., and to 
\(\mathcal{A}_{(N)}\) to get \(\widehat{\mathcal{A}}_{(N)}\).
The complexity is $O(I_1\cdots I_N p\log(p))$.

2. Apply matrix SVD to the frontal slices of 
\[\widehat{\mathcal{A}}_{(1)}\in \mathbb{C}_{p}^{I_1 \times (I_2\cdots I_{N})}, \cdots, 
\widehat{\mathcal{A}}_{(N)} \in \mathbb{C}_{p}^{I_{N} \times (I_1  \cdots  I_{N-1})},\]
to get \(\widehat{\mathcal{U}}_{1},\ldots, \widehat{\mathcal{U}}_{N}\). 
Then apply IFFT to get \(\mathcal{U}_{1},\ldots,\mathcal{U}_{N}\).

3. Apply \(n\)-mode product to get the core tensor \(\mathcal{S }= \mathcal{A}*_1 \mathcal{U}_1^{T}\cdots *_{N}\mathcal{U}_N^{T}\).

If we assume that \(I_1 \le I_2\cdots I_{N},\ldots,I_{N} \le I_1\cdots I_{N-1} \),
then step 2 requires 
\begin{equation}\label{eq:cc:1}O(pI_1^2I_2\cdots I_{N}+\cdots+pI_{N}^2N_1\cdots I_{N-1})\end{equation}
flops, as for each $\mathcal U_n$ it computes $p$ SVD of size $I_n\times \prod^N_{j\neq n}I_j$ in the Fourier domain (if $\mathcal A$ is real then only half of the SVDs need to be performed) and the operations concerning IFFT are dominated. Step 3 requires 
\begin{equation*}O(pI_1^2I_2\cdots I_{N}+ \cdots +  p I_{N}^2I_1\cdots I_{N-1})\end{equation*}
flops (\cite[Fact 3]{kilmer2011factorization}).

For tr-Hot-SVD with \(I'_1 \le I_1,\ldots, I'_N \le I_N\), the  only difference is that at step 3,  the complexity is    \begin{equation}
    \label{eq:cc:2}
    O\left(pI'_1I_1(I_2I_3\cdots I_N)+p I'_2I_2(I'_1I_3\cdots I_N)+\cdots+pI'_NI_N(I'_1I'_2\cdots I'_{N-1})\right).
\end{equation}


Next we consider seq-tr-Hot-SVD.
For seq-tr-Hot-SVD of \(\mathcal{A}\), we perform in step \(n\) (for \(n=1,\ldots,N\)) the following
two operations.

First we compute the t-SVD of 
\[\left(\mathcal{A}*_1\widehat{\mathcal{U}}_1^H \cdots *_{n-1}\widehat{\mathcal{U}}_{n-1}^H \right)_{(n)}
\in \mathbb{C}_p^{I_n \times (I'_1 I'_2\cdots I'_{n-1}I_{n+1}I_{n+2}\cdots I_N)},\]
to get  
 \(\mathcal{U}_n \in \mathbb{C}_p^{I_n \times I_n}\). 
This step requires, assuming that \(I_n \le I'_1 I'_2\cdots I'_{n-1}I_{n+1}I_{n+2}\cdots I_N,\) 
\begin{equation}
  O\left(pI_n^2(I'_1 I'_2\cdots I'_{n-1}I_{n+1}I_{n+2}\cdots I_N)\right)
\end{equation}
flops. 
The FFT and IFFT operations are dominated. 

Next we compute \(\left(\mathcal{A}*_1\widehat{\mathcal{U}}_1^H \cdots *_{n-1}\widehat{\mathcal{U}}_{n-1}^H \right)
*_n \widehat{\mathcal{U}}_n^H\), which can be obtained by  
\[\widehat{\mathcal{U}}_n^H*
\left(\mathcal{A}*_1\widehat{\mathcal{U}}_1^H \cdots *_{n-1}\widehat{\mathcal{U}}_{n-1}^H \right)_{(n)}
=\widehat{\mathcal{U}}_n^H* \mathcal{U}_n*\Sigma_n*\mathcal{V}_n^H
=\widehat{\Sigma}_n* \widehat{\mathcal{V}}_n^H,\]
where \(\widehat{\Sigma}_n \in \mathbb{C}_p^{I'_n \times I'_n}\) and 
\(\widehat{\mathcal{V}}_n^H \in \mathbb{C}_p^{I'_n \times (I'_1 I'_2\cdots I'_{n-1}I_{n+1}I_{n+2}\cdots I_N)}\)
are obatained through truncation of the t-SVD of $\left(\mathcal{A}*_1\widehat{\mathcal{U}}_1^H \cdots *_{n-1}\widehat{\mathcal{U}}_{n-1}^H \right)_{(n)}$.
This step requires the computaion of \(I'_n  (I'_1 I'_2\cdots I'_{n-1}I_{n+1}I_{n+2}\cdots I_N)\) t-product
of tubal scalars and needs
\[O\left(p \log(p) I'_n (I'_1 I'_2\cdots I'_{n-1}I_{n+1}I_{n+2}\cdots I_N)\right)\]
flops.

In summary, a rank-\((I'_1,\ldots,I'_N)\) seq-tr-Hot-SVD of \(\mathcal{A} \in \mathbb{C}_p^{I_1 \times I_2 \times \cdots \times I_N}\) requires
\begin{equation}\label{compsthotsvd}
  \begin{aligned}
    O\Bigl(&
pI_1^2(I_2I_3\cdots I_N)+pI_2^2(I'_1I_3\cdots I_N)+\cdots+pI_N^2(I'_1I'_2\cdots I'_{N-1})\\
+&p\log(p) I'_1(I_2I_3\cdots I_N)+p \log(p)I'_2(I'_1I_3\cdots I_N)+\cdots+p\log(p)I'_N(I'_1I'_2\cdots I'_{N-1})\Bigr)
  \end{aligned}
\end{equation}
flops.

Comparing this with tr-Hot-SVD (\eqref{eq:cc:1} + \eqref{eq:cc:2}), we see that seq-tr-Hot-SVD 
requires fewer computaions than tr-Hot-SVD, especially when \(I'_1,\ldots,I'_N\) are small.

\section{Numerical Examples}\label{sec:numer}
We only conduct preliminary experiments on Hot-SVD, tr-Hot-SVD, and seq-tr-Hot-SVD in this section, as the main focus of this paper is on the derivation and providing theoretical analysis on this models. All the examples are   conducted on an Intel
i7 CPU desktop computer with 32 GB of RAM. The supporting software is Matlab 2019b. We remark that our codes are modified from those in Tensorlab \cite{vervliet2016tensorlab}, namely, the codes are tubal versions of the corresponding ones in \cite{vervliet2016tensorlab}. 
We set $L$ as DFT in this section. 

We first illustrate the properties of \cref{th:error_bound_tr_hotsvd} via a small example. Consider the tensor $\mathcal A\in\mathbb R^{2\times 2\times 2\times 2} $ given by $\mathcal A(i,j,k,l) = (i+j+k+l - 3)^{-1}$ for each $(i,j,k,l)$. This is konwn as the Hilbert tensor \cite{song2014infinite}. Applying Hot-SVD to $\mathcal A$, we obtain $\mathcal U_1=\mathcal U_2 = \mathcal U_3 = \mathcal U \in\mathbb R^{2\times 2}_2$, with $\mathcal U(:,:,1) = \left[\begin{smallmatrix} -0.8924 &   0.4395\\
   -0.4395 &  -0.8924 \end{smallmatrix}\right]$, $\mathcal U(:,:,2) = \left[\begin{smallmatrix} 0.0453 &    0.0920\\
   -0.0920 &   0.0453 \end{smallmatrix}  \right]$, and $\mathcal S\in\mathbb R^{2\times 2\times 2}_2$ with
   \begin{align*}
     &  \mathcal S(1,1,1) = [-1.4734 ~  -0.8780]^T, ~\mathcal S(2,2,2) = [0.0102~    0.0107]^T,\\
      & \mathcal S(2,1,1) = \mathcal S(1,2,1) = \mathcal S(1,1,2) = [-0.0004~ -0.0004]^T, \\
      & \mathcal S(2,2,1) = \mathcal S(2,1,2) = \mathcal S(1,2,2) = [-0.0612 ~  -0.0343]^T.
   \end{align*}
   Then, noticing the symmetry of the tubal scalars, we have
   \begin{align*}
      & \sum^2_{i_2,i_3=1}\nolimits\mathcal S(1,i_2,i_3)*\mathcal S(2,i_2,i_3) = \mathcal S(1,1,2)*(\mathcal S(1,1,1) +2\mathcal  S(1,2,2)   ) \\
       &~~~~~~~~~~~~+ \mathcal S(1,2,2)*\mathcal S(2,2,2) =[0 ~0]^T,\\
       &\sum^2_{i_1,i_3=1}\mathcal S(i_1,1,i_3)*\mathcal S(i_1,2,i_3) = \sum^2_{i_1,i_2=1}\mathcal S(i_1,i_2,1)*\mathcal S(i_1,i_2,2) = \sum^2_{i_2,i_3=1}\mathcal S(1,i_2,i_3)*\mathcal S(2,i_2,i_3) = [0~0]^T,
   \end{align*}
   illustrating the all-orthogonality of $\mathcal S$. 
   
  We then show the ordering of $\mathcal S$. Note that 
  \begin{align*}
  &\mathcal S_{(1)} = \left[\begin{smallmatrix} \mathcal S(1,1,1)& \mathcal S(1,2,1)& \mathcal S(1,1,2) & \mathcal S(1,2,2)\\
  \mathcal S(2,1,1) & \mathcal S(2,2,1)& \mathcal S(2,1,2) & \mathcal S(2,2,2)\end{smallmatrix}\right] \in\mathbb R^{2\times 4}_2,\\
  &\mathcal S_{(2)} = \left[\begin{smallmatrix} \mathcal S(1,1,1)& \mathcal S(2,1,1) & \mathcal S(1,1,2) & \mathcal S(2,1,2) \\
  \mathcal S(1,2,1) & \mathcal S(2,2,1) & \mathcal S(2,1,2) & \mathcal S(2,2,2) \end{smallmatrix} \right]  \in\mathbb R^{2\times 4}_2,\\
  & \mathcal S_{(3)} = \left[\begin{smallmatrix} \mathcal S(1,1,1) & \mathcal S(2,1,1) & \mathcal S(1,2,1) & \mathcal S(2,2,1) \\
  \mathcal S(1,1,2) & \mathcal S(2,1,2) & \mathcal S(1,2,2) & \mathcal S(2,2,2) \end{smallmatrix}   \right] \in\mathbb R^{2\times 4}_2.
  \end{align*}
  Then, 
  \begin{align*}
      (\|\mathcal S_{i_1=1}\|, \|\mathcal S_{i_1=2}\|) =  (\|\mathcal S_{i_2=1}\|  , \|\mathcal S_{i_2=2}\|) =(\|\mathcal S_{i_3=1}\|  , \|\mathcal S_{i_3=2}\|  ) =(1.7166,0.1002),
  \end{align*}
  confirming the ordering property.
  
  Next, we generate $\mathcal A$ as
  \[ \mathcal A = \mathcal A^{\sharp}/\|\mathcal A^{\sharp}\| + \beta \mathcal E/\|\mathcal E\|,   \]
  where $\mathcal A^{\sharp} = \sum^R_{i=1}\mathbf a_{1,i}\circ \cdots \circ \mathbf a_{N,i}$,    $[\mathbf a_{1,1} \cdots \mathbf a_{1,R}   ]  \in \mathbf R^{I_1\times R},\ldots, [\mathbf a_{N,1} \cdots \mathbf a_{N,R}] \in\mathbb R^{I_N\times R}$ are randomly generated matrices obeying standard Gaussian distribution, and $\mathcal E$ is also a randomly generated tensor obeying standard Gaussian distribution. We set $R=5$ and $\beta=0.1$ in our experiment. We apply tr-Hot-SVD and seq-tr-Hot-SVD on recovering $\mathcal A^{\sharp}$. The size of the core tubal tensor $\mathcal S$ is $I_1^\prime = \cdots = I_{N-1}^{\prime}=R$, and the tubal length $p=I_N$.  We evaluate $err = \|\mathcal A^{\sharp} - \bar{\mathcal A}\|/\|\mathcal A^{\sharp}\|$ and the CPU time, where $\bar{\mathcal A} = \mathcal S *_1\mathcal U_1^T \cdots *_{N-1} \mathcal U_{N-1}^T$ is generated by tr-Hot-SVD or seq-tr-Hot-SVD. The results are presented in Table \ref{tab:1}, averaged over $50$ instances for each case.
  
  From the table, we observe that both algorithms can recover the true tensors well, and seq-tr-Hot-SVD is slightly better in terms of the recovery error. Considering the CPU time, we see that seq-tr-Hot-SVD usually performs $2$ to $3$ times faster than tr-Hot-SVD, which confirms the computational complexity analysis in \cref{sec:comp}. In particular, when the size of the tensor becomes larger and larger (relative to the truncation size $I_1^\prime,\ldots, I_{N-1}^\prime$), the advantage of seq-tr-Hot-SVD turns out to be more evident. 
\begin{table}[]
    \centering
  \begin{tabular}{r|rr|rr}
  	\toprule
  	\multicolumn{1}{r}{} & \multicolumn{2}{c}{tr-Hot-SVD} & \multicolumn{2}{c}{seq-tr-Hot-SVD}\\
  	\midrule 
    {$[I_1 ~\cdots~ I_N]$} & {err}  & {time} & {err} & {time} \\
    \toprule
$[10~ 10~ 10~ 10]$    & 0.04616 & 0.010  & 0.04595  & 0.005  \\
\midrule
$[15~ 15~ 15~ 10]$    & 0.02913  & 0.015  & 0.02911 & 0.009  \\
\midrule
$[20~ 20~ 20~ 10]$    & 0.02131 & 0.022  & 0.02121 & 0.013  \\
\midrule
$[25~ 25~ 25~ 10]$    & 0.01671 & 0.036  & 0.01668  & 0.020  \\
\midrule
$[30~ 30~ 30~  10]$    & 0.01371  & 0.041  & 0.01370 & 0.030  \\
\midrule
$[35~ 35~ 35~  10]$   & 0.01165  & 0.048  & 0.01161 & 0.036  \\
\midrule
$[40~ 40~ 40~ 10]$    & 0.01023 & 0.054  & 0.01013  & 0.039  \\
\midrule
$[10~ 10~ 10~ 10~  10]$    & 0.02725  & 0.023  & 0.02721  & 0.014  \\
\midrule
$[15~ 15~ 15~ 15~   10]$    & 0.01291 & 0.042  & 0.01284 & 0.040  \\
\midrule
$[20~ 20~ 20~ 20~ 10]$    & 0.00773 & 0.096 & 0.00768  & 0.066  \\
\midrule
$[25~25~25~25~10]$& 0.00518 & 	0.212 & 	0.00516 & 	0.112\\ 
\midrule
$[30~30~30~30~10]$& 0.00374 &	0.441 &	 0.00373 & 	0.205 \\ 
\midrule
$[35~35~35~35~10]$& 0.00289 & 	0.771 &	 0.00287 & 	0.334 \\ 
\midrule
$[40~40~40~40~10]$& 0.00230 & 	1.328 & 	0.00229 & 	0.548 \\
\midrule
$[10~ 10~ 10~ 10~ 10~ 10]$    & 0.01812  & 0.084  & 0.01809  & 0.066  \\
\midrule
$[15~ 15~ 15~ 15~ 15~ 10]$    & 0.00669 & 0.542  & 0.00667  & 0.277  \\
\midrule
$[20~ 20~ 20~ 20~ 20~ 10]$    & 0.00332  & 2.203  & 0.00331  & 0.880 \\
\midrule
$[25~ 25~ 25~ 25~ 25~ 10]$    & 0.00194 & 7.134  & 0.00193  & 2.585  \\
\midrule
$[30~ 30~ 30~ 30~ 30~ 10]$    & 0.00126 & 15.889  & 0.00125 & 5.629 \\
\bottomrule 
\end{tabular}%
\caption{Comparisons of tr-Hot-SVD and seq-tr-Hot-SVD on recovering randomly generated tensors. $I_1^\prime = \cdots = I_{N-1}^\prime = 10$.}\label{tab:1}
\end{table}

We then test tr-Hot-SVD and seq-tr-Hot-SVD on color video compression. 
The tested video ``airport'' was downloaded from \url{http://perception.
i2r.a-star.edu.sg/bk_model/bk_index.html}. The original video consists of $4583$
frames, each of size $144 \times 176$. We use 500 frames, resulting into a tensor of size $500\times 144\times 176\times 3$. We treat it as a tubal tensor in $\mathbb R^{500\times 144\times 176}_3$. We set different truncation sizes for the two algorithms to compress the data. We use tr-HOSVD \cite{de2000a} and seq-tr-HOSVD \cite{vannieuwenhoven2012new} as  baselines. The truncation sizes of tr-HOSVD and seq-tr-HOSVD are the same as the tubal counterparts, except that we do not truncate the fourth-mode of both tr-HOSVD and seq-tr-HOSVD. We   evaluate the reconstruction error $err = \| \mathcal A^{\sharp} - \bar{\mathcal A}\|/\|\mathcal A^{\sharp}\|$ and the CPU time, where $\mathcal A^{\sharp}$ is the data tensor and $\bar{\mathcal A}$ is reconstructed by the algorithms. The results are presented in Table \ref{tab:2}. Some reconstructed frames by seq-tr-Hot-SVD are illustrated in Fig. \ref{fig:color_video}.

\begin{table}[]
    \centering
  \begin{tabular}{r|rr|rr|rr|rr}
  	\toprule
  	\multicolumn{1}{r}{} & \multicolumn{2}{c}{tr-Hot-SVD} & \multicolumn{2}{c}{seq-tr-Hot-SVD} & \multicolumn{2}{c}{tr-HOSVD} & \multicolumn{2}{c}{seq-tr-HOSVD}\\
  	\midrule 
    {$[I^\prime_1 ~I^\prime_2~ I^\prime_3]$} & {err}  & {time} & {err} & {time} & err & time & err & time \\
    \toprule
$[200~ 50~ 50]$ & \underline{0.0769}  & 14.50  & {\bf 0.0762}  & 7.37  & 0.0782  & 5.33  & 0.0776  & 3.20  \\
$[100~ 50~ 50]$ & \underline{0.0841}  & 13.85  & {\bf 0.0835}  & 6.35  & 0.0854  & 5.59  & 0.0848  & 2.66  \\
$[50~ 50~ 50]$ & \underline{0.0943}  & 13.19  & {\bf 0.0937}  & 9.98  & 0.0955  & 6.07  & 0.0950  & 2.95  \\
$[30~ 30~ 30]$ & \underline{0.1171}  & 17.75  & {\bf 0.1157}  & 7.94  & 0.1187  & 5.68  & 0.1174  & 2.19  \\
$[20~ 10~ 10]$ & \underline{0.1595}  & 12.48  & {\bf 0.1574}  & 3.39  & 0.1621  & 6.01  & 0.1601  & 1.76  \\
$[10~ 5~ 5]$ & \underline{0.1841}  & 13.65  & {\bf 0.1824}  & 3.27  & 0.1881  & 6.03  & 0.1863  & 1.46  \\
\bottomrule
\end{tabular}%
\caption{Comparisons of tr-Hot-SVD, seq-tr-Hot-SVD, tr-HOSVD, and seq-tr-HOSVD on color video compression.} \label{tab:2}
\end{table}

From the table, we observe that concerning the reconstruction error, seq-tr-Hot-SVD is the best among the four algorithms, followed by tr-Hot-SVD. This shows the advantage of the tubal versions of HOSVD. Concerning the CPU time, seq-tr-HOSVD is the fastest one, while seq-tr-Hot-SVD and tr-Hot-SVD are slower than tr-HOSVD and seq-tr-HOSVD. This is because the tubal versions need to perform additional Fourier transforms. We also see that seq-tr-Hot-SVD is about $2$ to $4$ times faster than tr-Hot-SVD.

	\begin{figure} 
		\centering

		\renewcommand*{\arraystretch}{0.5}
		\setlength{\tabcolsep}{1pt}

		\begin{tabular}{ccccccc}

			\includegraphics[width=0.7in]
			{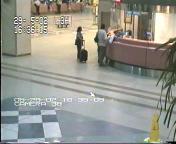}
			
			&

			\includegraphics[width=0.7in]
			{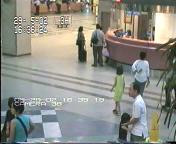}
			
			&
			\includegraphics[width=0.7in]
			{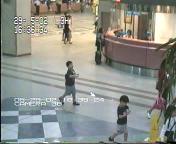}
			
			&
			\includegraphics[width=0.7in]
			{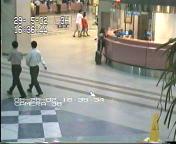}
			
			&
			\includegraphics[width=0.7in]
			{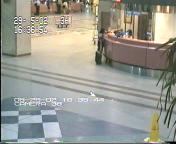}
			
			&
			\includegraphics[width=0.7in]
			{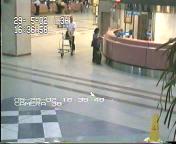}
			
			&
			\includegraphics[width=0.7in]
			{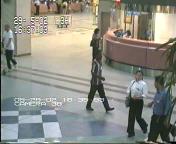}
			\\

			\includegraphics[width=0.7in]
			{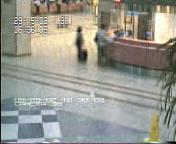}
			
			&
			
			\includegraphics[width=0.7in]
			{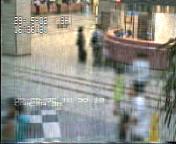}
			
			&
			\includegraphics[width=0.7in]
			{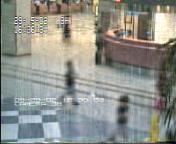}
			
			&
			\includegraphics[width=0.7in]
			{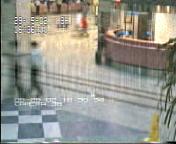}
			
			&
			\includegraphics[width=0.7in]
			{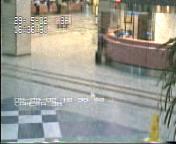}
			
			&
			\includegraphics[width=0.7in]
			{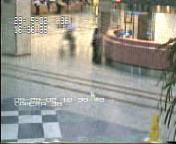}
			
			&
			\includegraphics[width=0.7in]
			{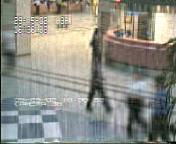}
			\\

			\includegraphics[width=0.7in]
			{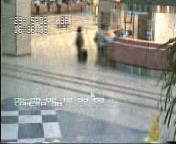}
			
			&

			\includegraphics[width=0.7in]
			{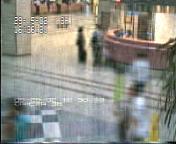}
			
			&
			\includegraphics[width=0.7in]
			{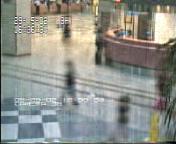}
			
			&
			\includegraphics[width=0.7in]
			{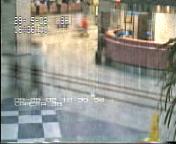}
			
			&
			\includegraphics[width=0.7in]
			{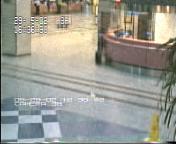}
			
			&
			\includegraphics[width=0.7in]
			{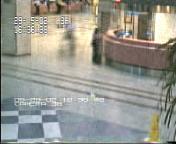}
			
			&
			\includegraphics[width=0.7in]
			{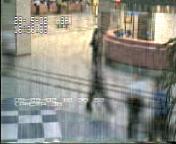}
			\\
			
			\includegraphics[width=0.7in]
			{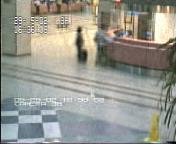}
			
			&

			\includegraphics[width=0.7in]
			{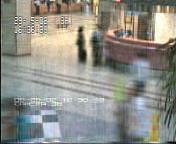}
			
			&
			\includegraphics[width=0.7in]
			{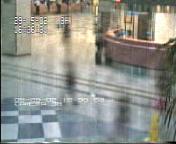}
			
			&
			\includegraphics[width=0.7in]
			{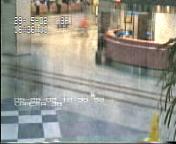}
			
			&
			\includegraphics[width=0.7in]
			{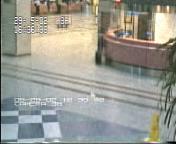}
			
			&
			\includegraphics[width=0.7in]
			{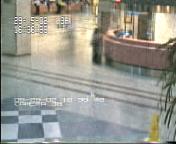}
			
			&
			\includegraphics[width=0.7in]
			{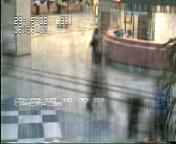}
			\\

			\includegraphics[width=0.7in]
			{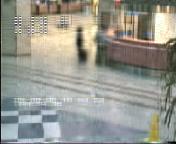}
			
			&

			\includegraphics[width=0.7in]
			{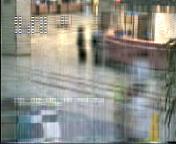}
			
			&
			\includegraphics[width=0.7in]
			{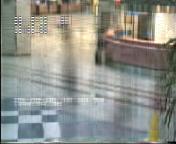}
			
			&
			\includegraphics[width=0.7in]
			{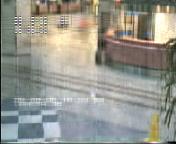}
			
			&
			\includegraphics[width=0.7in]
			{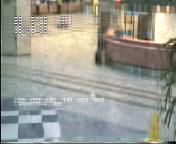}
			
			&
			\includegraphics[width=0.7in]
			{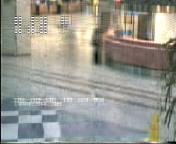}
			
			&
			\includegraphics[width=0.7in]
			{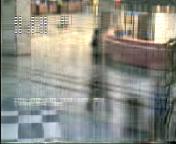}
			\\

			\includegraphics[width=0.7in]
			{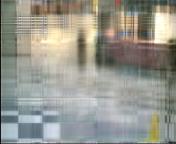}
			
			&

			\includegraphics[width=0.7in]
			{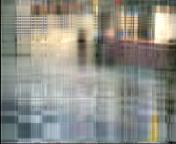}
			
			&
			\includegraphics[width=0.7in]
			{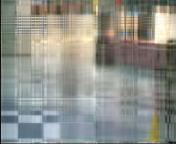}
			
			&
			\includegraphics[width=0.7in]
			{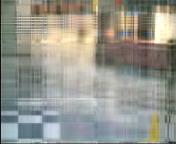}
			
			&
			\includegraphics[width=0.7in]
			{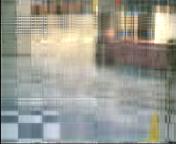}
			
			&
			\includegraphics[width=0.7in]
			{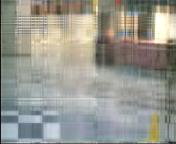}
			
			&
			\includegraphics[width=0.7in]
			{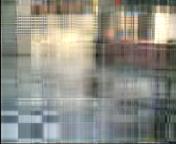}
			\\
			
			\includegraphics[width=0.7in]
			{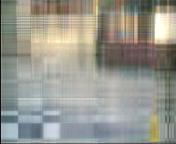}
			
			&

			\includegraphics[width=0.7in]
			{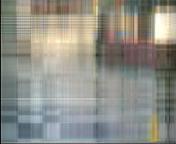}
			
			&
			\includegraphics[width=0.7in]
			{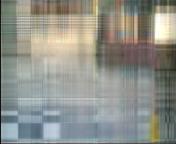}
			
			&
			\includegraphics[width=0.7in]
			{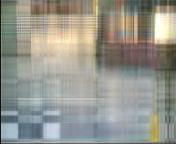}
			
			&
			\includegraphics[width=0.7in]
			{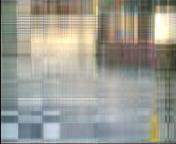}
			
			&
			\includegraphics[width=0.7in]
			{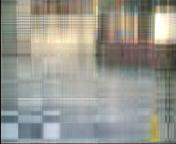}
			
			&
			\includegraphics[width=0.7in]
			{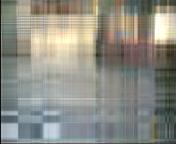}
		\end{tabular}

		\caption{The first row: the original frames. The second to the last rows: frames reconstructed by seq-tr-Hot-SVD with different truncation sizes.}\label{fig:color_video}
	\end{figure}

\section{Conclusions}
\label{sec:con}

In this paper we studied the analogue of HOSVD in the setup of tubal tensors, which we call Hot-SVD. This is alternative to the t-SVD for   tensors of order higher than three  \cite{martin2013order}.
To   prove   the validness of Hot-SVD, we introduced a new transpose for third-order tensors and established some basic properties.
Hot-SVD enjoys most of the properties of the HOSVD and manifests the efficacy of the language of tubal matrices.  
We also established an error bound $\sqrt{N}$ for the truncated   Hot-SVD and sequentially truncated Hot-SVD of tensors order $N+1$. We remark that our purpose of studying Hot-SVD is not to compare it with t-SVD for tensors of order higher than three \cite{martin2013order}, but to contribute another kind of decomposition of higher-order tensors in the tensor-tensor product setting.  
  In the future, randomized algorithms can be investigated for Hot-SVD.

\section*{Acknowledgments}
This work was supported by   National Natural Science
Foundation of China Grant 12171105,     Fok Ying Tong Education Foundation Grant 171094, and the special foundation for Guangxi Ba Gui Scholars.  
\bibliographystyle{plain}
\bibliography{references,t_prod,tensor,TensorCompletion,orth_tensor}

\end{document}